\let\oldtocsection=\tocsection
\let\oldtocsubsection=\tocsubsection
\let\oldtocsubsubsection=\tocsubsubsection
\renewcommand{\tocsection}[2]{\hspace{0em}\oldtocsection{#1}{#2}}
\renewcommand{\tocsubsection}[2]{\hspace{1em}\oldtocsubsection{#1}{#2}}
\renewcommand{\tocsubsubsection}[2]{\hspace{2em}\oldtocsubsubsection{#1}{#2}}
\newtheorem{thm}{Theorem}[section]
\newtheorem{cor}[thm]{Corollary}
\newtheorem{prop}[thm]{Proposition}
\newtheorem{lem}[thm]{Lemma}
\newtheorem{quest*}{Question}
\newtheorem{prob*}{Problem}
\theoremstyle{definition}
\theoremstyle{remark}
\newtheorem{rem*}{Remark}
\newtheorem{rems*}[thm]{Remarks}
\numberwithin{equation}{section}
\newcounter{notation}
\DeclareUrlCommand\DOI{}
\crefname{figure}{Figure}{Figures}
\theoremstyle{plain}
\newtheorem*{thm*}{Theorem}
\crefname{thm}{Theorem}{Theorems}
\crefname{cor}{Corollary}{Corollarys}
\newtheorem*{cor*}{Corollary}
\crefname{cor*}{Corollary}{Corollarys}
\crefname{lem}{Lemma}{Lemmas}
\crefname{prop}{Proposition}{Propositions}
\crefname{conj}{Conjecture}{Conjectures}
\newtheorem*{conj*}{Conjecture}
\crefname{conj*}{Conjecture}{Conjectures}
\crefname{defn}{Definition}{Definitions}
\theoremstyle{remark}
\newtheorem*{remark}{Remark}
\def\addsymbol #1: #2#3{$#1$ \> \parbox{5.4in}{#2 \dotfill \pageref{#3}}\\} 
\def\addsymbolEND #1: #2#3{$#1$ \> \parbox{5.4in}{#2 \dotfill \pageref{#3}}}
\renewcommand{\1}{ {\bf 1}}
\renewcommand{\bar}{\overline}
\newcommand{\kf}{\mathfrak{f}}
\newcommand{\sL}{\mathscr{L}}
\renewcommand{\Im}{\mathrm{Im}}
\renewcommand{\pmod}[1]{\, (\mathrm{mod} {\, #1})}
\newcommand{\kn}{\mathfrak{n}}
\newcommand{\N}{\mathrm{N}}
\newcommand{\cN}{\mathcal{N}}
\newcommand{\kN}{\mathfrak{N}}
\newcommand{\kp}{\mathfrak{p}}
\newcommand{\kP}{\mathfrak{P}}
\newcommand{\cO}{\mathcal{O}}
\newcommand{\ord}{\mathrm{ord}\,}
\newcommand{\Q}{\mathbb{Q}}
\newcommand{\cQ}{\mathcal{Q}}
\newcommand{\R}{\mathbb{R}}
\renewcommand{\Re}{\mathrm{Re}}
\newcommand{\sumP}{\sideset{}{'}\sum}
\newcommand{\sumS}{\sideset{}{^\star}\sum}
\newcommand{\Z}{\mathbb{Z}}
\newcommand{\cZ}{\mathcal{Z}}
\title[A Chebotarev variant of the Brun-Titchmarsh theorem]{A Chebotarev variant of the Brun-Titchmarsh theorem and bounds for the Lang-Trotter conjectures}
\author{Jesse Thorner}
\address{
Department of Mathematics, Stanford University \\
Building 380, Sloan Mathematical Center, Stanford, CA, 94305, United States}
\email{jesse.thorner@gmail.com}
\author{Asif Zaman}
\address{
Department of Mathematics, University of Toronto \\
Room 6290, 40 St. George St.,  Toronto, ON, M5S2E4, Canada}
\email{asif@math.toronto.edu}
\date{\today}
\begin{document}

\maketitle
\begin{abstract}
We improve the Chebotarev variant of the Brun-Titchmarsh theorem proven by Lagarias, Montgomery, and Odlyzko using the log-free zero density estimate and zero repulsion phenomenon for Hecke $L$-functions that were recently proved by the authors.  Our result produces an improvement for the best unconditional bounds toward two conjectures of Lang and Trotter regarding the distribution of traces of Frobenius for elliptic curves and holomorphic cuspidal modular forms.  We also obtain new results on the distribution of primes represented by positive-definite integral binary quadratic forms.
\end{abstract}

\section{Introduction and Statement of Results}

Let $\pi(x;q,a)$ denote the number of primes $p\leq x$ such that $p\equiv a\pmod q$.  The Siegel-Walfisz theorem states that if $(a,q)=1$ and there exists some constant $A>0$ such that $q\leq(\log x)^A$, then
\begin{equation}
\label{eqn:PNT_AP}
\pi(x;q,a)\sim\frac{1}{\varphi(q)}\mathrm{Li}(x),
\end{equation}
where $\mathrm{Li}(x)=\int_2^x \frac{dt}{\log t} \sim \frac{x}{\log x}$.  Assuming the generalized Riemann hypothesis, the range of $q$ extends to $q\leq x^{1/2-\epsilon}$ for any $\epsilon>0$.  Any unconditional improvement in the range of $q$ wonld preclude the existence of a real Landau-Siegel zero for the $L$-functions of real Dirichlet characters.  Since this seems to be beyond the reach of current techniques, it is often useful to trade asymptotic equality in \eqref{eqn:PNT_AP} for upper and lower bounds of the correct asymptotic order which hold in improved ranges of $x$.

The first lower bound of this form follows from Fogels' improvements \cite{Fogels} to the ideas of Linnik \cite{Linnik}.  These ideas were substantially improved by Heath-Brown \cite{HBLinnik} and Maynard \cite{Maynard}, the latter of whom proved that if $q$ is sufficiently large\footnote{All implied constants in this paper are effectively computable.  Unless specifically mentioned otherwise, all implied constants in this paper are also absolute.}, then
\begin{equation}
\label{eqn:Linnik}
\pi(x;q,a)\gg \frac{\log q}{\varphi(q)\sqrt{q}}\frac{x}{\log x}\qquad \text{for } x\geq q^{8}.
\end{equation}
To describe upper bounds in improved ranges of $q$, we define $\theta=(\log q)/\log x$.  Titchmarsh \cite{Titchmarsh} used Brun's sieve to show that if $\theta<1$, then
\begin{equation}
\label{eqn:BT}
\pi(x;q,a)\ll\frac{1}{1-\theta}\frac{x}{\varphi(q)\log x}.
\end{equation}
The implied constant can be made explicit, and has been estimated by various authors.  The strongest result in this direction for all ranges of $q$ is due to Montgomery and Vaughan \cite{MV}; they used the large sieve inequality to prove that if $\theta<1$, then
\begin{equation}
\label{eqn:MV}
\pi(x;q,a)\leq\frac{2}{1-\theta}\frac{x}{\varphi(q)\log x}.
\end{equation}

Since the factor of 2 is unlikely to be improved using current techniques, many authors have improved the $\theta$-dependence.  To summarize, if $q$ is sufficiently large, then
\[
\pi(x;q,a)\leq\frac{(C(\theta)+o(1))}{\varphi(q)}\frac{x}{\log x},
\]
where
\begin{equation}
C(\theta)=\begin{cases}
(2-(\frac{1-\theta}{4})^6)/(1-\theta)&\mbox{if $2/3\leq\theta<1$,}\\
8/(6-7\theta)&\mbox{if $9/20<\theta< 2/3$},\\
16/(8-3\theta)&\mbox{if $1/8<\theta\leq 9/20$},\\
2&\mbox{if $0<\theta\leq 1/8$},
\end{cases}
\label{eqn:ClassicalBT}
\end{equation}
the last line being recently proven by Maynard \cite{Maynard}.  (See \cite{Maynard} and the sources contained therein for a thorough overview of the problem.)  While progress on \eqref{eqn:BT} has typically followed from advances in sieve theory and exponential sums, Maynard's proof builds on Heath-Brown's analysis in \cite{HBLinnik} and uses a log-free zero density estimate for Dirichlet $L$-functions and careful analysis of Landau-Siegel zeros.

In this paper, we consider analogous questions for the distribution of prime ideals in the context of the Chebotarev density theorem.  Let $L/F$ be a finite Galois extension of number fields with Galois group $G$, and let $C\subset G$ be a conjugacy class.  Let $D_L$ denote the absolute value of the discriminant of $L/\Q$.  To each prime ideal $\kp$ of $F$ that does not ramify in $L$, there corresponds a certain conjugacy class of automorphisms in $G$ which are attached the prime ideals of $L$ lying above $\kp$.  We denote this conjugacy class by the Artin symbol $[\frac{L/F}{\kp}]$.  For a fixed conjugacy class $C\subset G$, let
\begin{equation}
\pi_C(x,L/F):=\#\Big\{\kp:\textup{$\kp$ unramified in $L$, $\Big[\frac{L/F}{\kp}\Big]=C$, $\mathrm{N}_{F/\Q}\kp\leq x$}\Big\}.
\label{def:pi_C}
\end{equation}
The Chebotarev density theorem, in the effective version proven by Lagarias and Odlyzko \cite{LO}, states that if $x\geq \exp(10[L:\Q](\log D_L)^2)$, then
\begin{equation}
\label{eqn:cdt_LO}
\pi_C(x,L/F)\sim\frac{|C|}{|G|}\mathrm{Li}(x).
\end{equation}
This subsumes many results in the distribution of primes, including the distribution of quadratic nonresidues modulo $D$ for any $D$, primes in arithmetic progressions, and prime ideals for any number field.  As such, we are interested in upper and lower bounds of $\pi_C(x,L/F)$ of the correct order of magnitude with an improved range of $x$.

A lower bound on $\pi_C(x,L/F)$ with the correct order of magnitude (in the $x$-aspect) follows from the work of Weiss \cite{Weiss}, which was recently made explicit by Thorner and Zaman \cite{JT_AZ}.  Let $H\subset G$ be a largest abelian subgroup such that $H\cap C$ is nonempty, and let $K$ be the fixed field of $H$.  For a character $\chi$ in the dual group $\widehat{H}$, let $\mathfrak{f}_{\chi}$ be the conductor of $\chi$, and define
\begin{equation}
\label{eqn:Qmax_def}
\mathcal{Q}(L/K)=\max\{\mathrm{N}_{K/\Q}\mathfrak{f}_{\chi}:\chi\in\widehat{H}\}.
\end{equation}
Thorner and Zaman proved that if $x\geq D_K^{694}\mathcal{Q}(L/K)^{521}+D_K^{232}\mathcal{Q}(L/K)^{367}[K:\Q]^{290[K:\Q]}$, then
\[
\pi_C(x,L/F)\gg\frac{1}{(D_K \mathcal{Q}(L/K)[K:\Q]^{[K:\Q]})^5}\frac{x}{[L:K]\log x},
\]
provided that $D_K\mathcal{Q}(L/K)[K:\Q]^{[K:\Q]}$ is sufficiently large.  When this is applied to arithmetic progressions (in which case $L=\Q(e^{2\pi i /q})$ for $q$ sufficiently large and $F=K=\Q$), this yields the bound
\[
\pi(x;q,a)\gg\frac{1}{q^5}\frac{x}{\varphi(q)\log x}\quad \text{for }  x\geq q^{521}.
\]
Up to the quality of the exponents, this is comparable to \eqref{eqn:Linnik}.

In analogy with \eqref{eqn:BT}, Lagarias, Montgomery, and Odlyzko \cite{LMO} proved that
\begin{equation}
\label{eqn:BT_CDT}
\pi_C(x,L/F)\ll \frac{|C|}{|G|}\mathrm{Li}(x), \qquad \log x\gg(\log D_L)(\log\log D_L)(\log\log\log e^{20}D_L).
\end{equation}
(Serre \cite{Serre} showed that $e^{20}$ can be replaced with 6.) There are several large sieve inequalities yielding Brun-Titchmarsh type results for counting prime integers in the ring of integers of a number field (e.g., \cite{Huxley_LargeSieve_I,Schaal}) and for counting prime ideals lying in arithmetic progressions (e.g., \cite{HinzLodemann}), but it appears that \eqref{eqn:BT_CDT} is the only Brun-Titchmarsh type bound that counts prime ideals with effective field dependence.  While the range of $x$ in \eqref{eqn:BT_CDT} is noticeably less restrictive than the range of $x$ for which \eqref{eqn:cdt_LO} holds, the range still depends poorly on $L$; this can be prohibitive for many applications.  It does not seem to be the case that sieve methods can produce a range of $x$ that is comparable to \eqref{eqn:MV}.  Using the log-free zero density estimate and zero repulsion results proved by Thorner and Zaman in \cite{JT_AZ}, we improve the range of $x$ in \eqref{eqn:BT_CDT}.

\begin{thm}
\label{thm:BT_Weiss}
Let $L/F$ be a Galois extension of number fields with Galois group $G$ with $L \neq \Q$. Let $C$ be any conjugacy class of $G$ and let $H$ be an abelian subgroup of $G$ such that $H \cap C$ is non-empty. If $K$ is the subfield of $L$ fixed by $H$ and $\mathcal{Q} = \mathcal{Q}(L/K)$ is given by \eqref{eqn:Qmax_def}, then
\[
\pi_C(x,L/F) \ll \frac{|C|}{|G|} \mathrm{Li}(x),
\]
provided that
\begin{equation}
x\gg D_K^{246} \mathcal{Q}^{185} + D_K^{82} \mathcal{Q}^{130} [K:\Q]^{246 [K:\Q]}.
\label{eqn:range_BT_Weiss}
\end{equation}
\end{thm}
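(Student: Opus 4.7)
The plan is to transfer the Chebotarev count on $L/F$ to an abelian count on $L/K$, express the latter as a sum over Hecke $L$-functions of $K$, and bound the resulting zero sums using the log-free zero density estimate and the zero repulsion phenomenon for Hecke $L$-functions proven in \cite{JT_AZ}.

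The first step is a Deuring-type reduction in the spirit of Weiss \cite{Weiss} and the lower-bound proof in \cite{JT_AZ}: since $H \cap C$ is nonempty and $H = \mathrm{Gal}(L/K)$ is abelian, one can bound $\pi_C(x,L/F)$ from above by a constant multiple of $\pi_{H \cap C}(x, L/K)$, with the constant carrying the factor $|C|/|G|$ present in the target inequality (up to negligible contributions from primes of $K$ of residue degree greater than one). This reduces the theorem to producing an upper bound of the correct order for $\pi_{H \cap C}(x, L/K)$. Using that $H$ is abelian, I would then combine character orthogonality on $H$ with Artin reciprocity to express $\pi_{H \cap C}(x, L/K)$ as a linear combination, indexed by $\chi \in \widehat{H}$, of weighted prime sums over $K$, each of which the standard explicit formula writes as a main term (nonzero only for the trivial character) minus $\sum_{\rho} x^{\rho}/\rho$ over nontrivial zeros of $L(s,\chi)$. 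Every relevant $\chi$ has conductor norm at most $\mathcal{Q} = \mathcal{Q}(L/K)$ by definition. The trivial-character contribution already yields the desired main term $\frac{|C|}{|G|}\mathrm{Li}(x)$; what remains is to show that the pooled zero contribution of the family $\{L(s,\chi)\}_{\chi \in \widehat{H}}$ is $O(\mathrm{Li}(x))$ throughout the range \eqref{eqn:range_BT_Weiss}.

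This pooled bound is where the two inputs from \cite{JT_AZ} enter. I would apply the log-free zero density estimate to uniformly bound --- crucially, with no spurious factor of $\log x$, which is what permits a Brun--Titchmarsh-shaped conclusion --- the number of zeros of the family with $\mathrm{Re}(s) \geq \sigma$ and $|\mathrm{Im}(s)| \leq T$. In the absence of a Landau--Siegel-type exceptional real zero for the family, a dyadic decomposition in both $1-\sigma$ and $T$ reduces the pooled zero contribution to $O(\mathrm{Li}(x))$ once $x$ exceeds a polynomial of the form $D_K^{246}\mathcal{Q}^{185}$, the first summand in \eqref{eqn:range_BT_Weiss}. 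If an exceptional real zero $\beta_1$ does occur for some real $\chi \in \widehat{H}$, its contribution $-x^{\beta_1}/\beta_1$ has the right sign to be absorbed into the main term (only strengthening the upper bound), and the zero repulsion estimate of \cite{JT_AZ} then pushes every remaining zero far enough from $\mathrm{Re}(s) = 1$ to again yield $O(\mathrm{Li}(x))$ in the stated range; this regime produces the second summand $D_K^{82}\mathcal{Q}^{130}[K:\Q]^{246[K:\Q]}$, with the factor $[K:\Q]^{[K:\Q]}$ reflecting the degree dependence of the zero-free region supplied by the Deuring--Heilbronn input.

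The hardest part will not be the overall structure but the precise bookkeeping: the height cutoff $T$, the density-estimate exponents, the exceptional-zero lower bound, and the size of $\widehat{H}$ must all be balanced carefully to extract the explicit exponents $(246, 185)$ and $(82, 130, 246)$ in \eqref{eqn:range_BT_Weiss}. These exponents are inherited directly from the quantitative shapes of the theorems in \cite{JT_AZ}, so passing them through the Deuring reduction, the character decomposition, and the final zero sums without slack is the delicate computation at the heart of the argument.
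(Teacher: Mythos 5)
Your overall skeleton---Deuring-type reduction to the abelian extension $L/K$, character orthogonality over $\widehat{H}$, an explicit-formula expression of the count as a main term minus zero sums, and the log-free zero density estimate of \cite{JT_AZ} to control the pooled zeros---is exactly the paper's route (Sections 2--6). However, your treatment of the exceptional zero diverges from the paper in a way that matters. The paper's proof of \cref{thm:BT_Weiss} deliberately uses \emph{only} the log-free zero density estimate (\cref{thm:LFZD_HighLying}) and no zero repulsion: since \cref{thm:LFZD_HighLying} gives $N(\sigma,T)\ll(e^{162\sL}T^{81n_K+162})^{1-\sigma}$, the number of zeros with $\beta$ within $O(1/\sL)$ of $1$ is $O(1)$, each such zero contributes $O(x^{\beta-1})=O(1)$ to the normalized sum, and so a putative Siegel zero needs no special handling when one only wants $\pi_C(x,L/F)\ll\frac{|C|}{|G|}\mathrm{Li}(x)$ rather than the sharp constant $2$. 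This is why the paper can run \cref{prop:FinalArgument} with $R=0$, making the restricted zero sum empty. Invoking the Deuring--Heilbronn input (\cref{thm:ZeroRepulsion,thm:ZeroRepulsion_Full}) as you propose is not only unnecessary here, it is problematic: those results are stated only for $\sL$ sufficiently large, whereas \cref{thm:BT_Weiss} carries no such hypothesis (contrast \cref{thm:BT_Weiss_sharp}, which does, and which is where the exceptional-zero case analysis actually lives). A proof routed through zero repulsion would at best prove the theorem for $D_K\cQ n_K^{n_K}$ large.

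Relatedly, your attribution of the two summands in \eqref{eqn:range_BT_Weiss} to ``no exceptional zero'' versus ``exceptional zero'' regimes is not how those exponents arise. Both summands come from the single condition $x\gg e^{245.9\sL}+D_K^2\cQ^2 n_K^{246 n_K}$: the two exponent profiles $(246,185)$ and $(82,130)$ reflect the two cases in the piecewise definition \eqref{def:sL} of $\sL$ (which trades off $\log D_K$ and $\log\cQ$ against $n_K\log n_K$ inside the density estimate), while the factor $[K:\Q]^{246[K:\Q]}$ comes from the admissibility constraint $\epsilon\geq 4\ell x^{-E/(B\ell)}$ on the smoothing parameters with $\ell=82n_K+162$ (needed so that the test function decays fast enough to kill high-lying zeros), i.e.\ from requiring $x\geq(32\ell)^{B\ell/E}\approx n_K^{O(n_K)}$---not from any exceptional-zero or zero-free-region input. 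If you drop the exceptional-zero branch entirely and carry out your ``first regime'' argument uniformly, you recover the paper's proof.
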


\begin{remark}
	For the valid range of $x$, one can minimize the exponents of $D_K$ and $\cQ$ at the expense of a less desirable dependence on $[K:\Q]^{[K:\Q]}$ and vice versa. In particular, the same upper bound for $\pi_C(x, L/F)$ holds when
	\begin{equation}
	\label{eqn:range_BT_Weiss_2}
	x \gg D_K^{164} \mathcal{Q}^{123} + D_K^{55} \mathcal{Q}^{87} [K:\Q]^{68 [K:\Q]} + D_K^2 \cQ^2 [K:\Q]^{14,000[K:\Q]}.
	\end{equation}
	See the remarks at the end of \cref{sec:Proof_BT_easy} for details.
\end{remark}

Our result always gives an improvement over \eqref{eqn:BT_CDT}.  Choosing $H$ to be the cyclic group generated by a fixed element of $C$, we have that $D_L^{1/|H|}\leq D_K\mathcal{Q}\leq D_L^{1/\varphi(|H|)}$ (see \cite[Section 6]{Weiss})  Moreover, by the classical work of Minkowski, we have that $[K:\Q] \ll \log D_K \leq \log D_L$. Therefore, \cref{thm:BT_Weiss} holds when $\log x \gg (\log D_L)(\log\log D_L)$, which is a modest unconditional improvement over \eqref{eqn:BT_CDT}.  However, one usually obtains a more significant improvement.  For most fields $K$, the bound $[K:\Q]\ll (\log D_K)/\log\log D_K$ holds.  In this case, we may take $\log x\gg \log(D_K\mathcal{Q})$ in \cref{thm:BT_Weiss}.    Thus  \cref{thm:BT_Weiss} holds when $\log x\gg (\log D_L)/\varphi(|H|)$, which noticeably improves \eqref{eqn:BT_CDT}.

Building on \cite{Maynard}, we obtain an implied constant that is essentially sharp (short of precluding the existence of Landau-Siegel zeros) when $x$ is sufficiently large in terms of $L/F$.

\begin{thm}
\label{thm:BT_Weiss_sharp}
Let $L/F$ be a Galois extension of number fields with Galois group $G$ and let $C$ be any  conjugacy class of $G$. Let $H$ be an abelian subgroup of $G$ such that $H \cap C$ is non-empty. If $K$ is the subfield of $L$ fixed by $H$ and $\mathcal{Q} = \mathcal{Q}(L/K)$ is given by \eqref{eqn:Qmax_def}, then 
\[
\pi_C(x,L/F) < \Big\{ 2 + O\Big([K:\Q] x^{-\tfrac{1}{166[K:\Q]+327}} \Big) \Big\} \frac{|C|}{|G|} \mathrm{Li}(x)
\]
for 
\begin{equation}
	\label{eqn:range_BT_Weiss_sharp}
	x \gg D_K^{695} \mathcal{Q}^{522} + D_K^{232} \mathcal{Q}^{367} [K:\Q]^{290 [K:\Q]},
\end{equation}
provided that $D_K \mathcal{Q} [K:\Q]^{[K:\Q]}$ is sufficiently large.  If any of the following conditions also hold, then the error term can be omitted:
	\begin{itemize}
		\item There exists a sequence of number fields $\Q=K_0\subset K_1\subset\cdots\subset K_n=K$ such that $K_{j+1}/K_j$ is a normal extension for all $j=0,1,\ldots,n-1$.
		\item $(2[K:\Q])^{2[K:\Q]^2} \ll D_K \cQ^{1/2}$.
		\item $x \gg [K:\Q]^{334[K:\Q]^2}$.
	\end{itemize}
\end{thm}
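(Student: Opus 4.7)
The plan is to adapt Maynard's sharp Brun-Titchmarsh argument \cite{Maynard} to the Chebotarev setting, leveraging the log-free zero density estimate and the Deuring-Heilbronn zero repulsion phenomenon for Hecke $L$-functions of $K$ established by the authors in \cite{JT_AZ}.

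First, I would reduce to the abelian case using the standard Chebotarev reduction (cf.\ \cite{LMO} and the proof of \cref{thm:BT_Weiss}): since $H\cap C$ is non-empty,
\[
\pi_C(x,L/F) \leq \frac{|C|}{|H|}\pi_{H\cap C}(x,L/K) + O\bigl(\sqrt{x}\log(D_L x)\bigr),
\]
so it suffices to bound $\pi_{H\cap C}(x,L/K)$. Since $H = \mathrm{Gal}(L/K)$ is abelian, orthogonality on $\widehat{H}$ combined with class field theory rewrites the problem as a sum over Hecke characters $\chi$ of $K$ whose conductors divide the conductor of $L/K$; the analytic conductor of each $L(s,\chi)$ is then controlled by $D_K$, $\cQ$, and $[K:\Q]$.

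Next, I would apply a truncated explicit formula to the Chebyshev-type function $\psi(x,\chi)$ and split the zero sum according to whether $\beta \leq 1-\eta$ or $\beta > 1-\eta$, for a parameter $\eta=\eta(x,D_K,\cQ,[K:\Q])$ to be optimized. The first piece contributes $O(x^{1-\eta})$ uniformly in $\chi$, and the optimal balance against \eqref{eqn:range_BT_Weiss_sharp} is exactly what produces the error $O([K:\Q]x^{-1/(166[K:\Q]+327)})$. For the zeros with $\beta > 1-\eta$, I would invoke the log-free zero density estimate of \cite{JT_AZ} to bound the total number of such zeros across all $\chi\in\widehat{H}$, and then the zero repulsion result from the same paper to show that all but at most one exceptional real zero $\beta_1$ (necessarily from a single quadratic character) lies so far from the line $\Re s = 1$ that its contribution is negligible. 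After summing, the trivial character supplies the main term $\mathrm{Li}(x)$, while a putative $\beta_1$ contributes at most another $\mathrm{Li}(x)$, yielding the leading constant $2$.

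The main obstacle is controlling the potential Landau-Siegel exceptional zero $\beta_1$, which is exactly what forces the factor $2$ in the unconditional bound. Each of the three bulleted hypotheses eliminates $\beta_1$ and thus improves the leading constant strictly below $2$, making the $O(\cdot)$ correction redundant. A tower of normal subextensions $\Q = K_0 \subset \cdots \subset K_n = K$ lets one descend any Siegel zero of a Hecke $L$-function over $K$ through the tower by Heilbronn's trick to a Dirichlet $L$-function over $\Q$, where the classical Landau bound $\beta_1 \leq 1 - c(D_K\cQ)^{-1/2}$ applies; the hypothesis $(2[K:\Q])^{2[K:\Q]^2}\ll D_K\cQ^{1/2}$ is precisely the threshold beyond which the explicit zero-free region for Hecke $L$-functions of $K$ from \cite{JT_AZ} already precludes $\beta_1$; and the hypothesis $x \gg [K:\Q]^{334[K:\Q]^2}$ makes the error term strictly small enough that a potential $\beta_1$ contribution is absorbed into the strict inequality. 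In each of these cases, the argument terminates with the clean bound $\pi_C(x,L/F) < 2\frac{|C|}{|G|}\mathrm{Li}(x)$.
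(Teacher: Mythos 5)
Your skeleton for the unconditional bound --- reduce to Hecke $L$-functions of $K$ via the abelian subgroup $H$, apply an explicit formula, discard high-lying and far-left zeros with the log-free zero density estimate, and split on whether an exceptional real zero $\beta_1$ exists, with the potential exceptional zero supplying the second $\mathrm{Li}(x)$ and hence the constant $2$ --- is broadly the same as the paper's, and the specific shape $O(n_K x^{-1/(166n_K+327)})$ does arise from optimizing the smoothing/truncation parameters (in the paper, $\epsilon = 4\ell x^{-344/(692\ell)}$ with $\ell = 82 n_K + 162$).

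The genuine gap is in your treatment of the three bulleted hypotheses. You assert that each of them ``eliminates $\beta_1$'': that a normal tower lets one descend a Siegel zero to a Dirichlet $L$-function where Landau's bound ``applies,'' and that $(2[K:\Q])^{2[K:\Q]^2}\ll D_K\cQ^{1/2}$ is a threshold beyond which the explicit zero-free region ``precludes'' $\beta_1$. Neither claim is tenable: no unconditional zero-free region, over $\Q$ or over $K$, rules out a real zero of a real character near $s=1$; Landau- and Stark-type bounds only quantify how close to $1$ such a zero can be. What the paper actually does is sharper bookkeeping in the Siegel-zero case: writing $\beta_1 = 1-\lambda_1/\sL$, the exceptional zero contributes $x^{-(1-\beta_1)}/\beta_1 \le 1 - \lambda_1\log x/(3\sL)$ rather than $1$, so the final bound is $2 - 100\lambda_1 + O\bigl(n_K x^{-1/(166n_K+327)}\bigr)$. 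The error term can be dropped precisely when it is $\ll \lambda_1^{1.001}$, i.e.\ when $x \gg (c\,n_K/\lambda_1^{1.001})^{166n_K+327}$, and this is where Stark's effective lower bound $\lambda_1 \gg \min\{g(n_K)^{-1},\, D_K^{-1/n_K}\cQ^{-1/2n_K}\}$ enters, with $g(n_K)=1$ when $K$ has a normal tower over $\Q$ and $g(n_K)=(2n_K)!$ otherwise. The three bullets are exactly the three regimes in which this lower bound (or, in the third bullet, brute force on the size of $x$) keeps the required range of $x$ within \eqref{eqn:range_BT_Weiss_sharp}. Without Stark's bound and without the refined main term $2-100\lambda_1$, your argument has no mechanism for absorbing the error term, so the ``error term can be omitted'' half of the theorem is not established.
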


In the special case where $L/\Q$ is an abelian Galois extension, we may take $K=\Q$ in Theorem \ref{thm:BT_Weiss_sharp}.  Since $\Q/\Q$ is trivially a normal extension, the error term in \cref{thm:BT_Weiss_sharp} can be omitted, and we recover Maynard's result in \eqref{eqn:ClassicalBT} for $\theta \leq 1/522$. (See the remark at the end of \cref{sec:Proof_BT_SiegelZeroExists} for details.)  Another interesting set of primes for which the normal tower condition in \cref{thm:BT_Weiss_sharp} applies is the set of primes represented by binary quadratic forms.  Suppose $Q(X,Y)$ is a positive-definite primitive binary integral quadratic form with discriminant $-D$. It  is well-known  that such forms, up to $\mathrm{SL}_2$-equivalence, form a group which is isomorphic to the ring class group of the imaginary quadratic field $\Q(\sqrt{-D})$ (see \cite[Theorem 7.1]{Cox} for example). Further, a rational prime $p$ is represented by $Q(X,Y)$ if and only if there exists a prime ideal $\kp$ in $\Q(\sqrt{-D})$ such that its norm equals $p$ and $\kp$ belongs to the corresponding class of $Q(X,Y)$. It follows by the Chebotarev density theorem that
\begin{equation}
\#\{ p \leq x   : p \text{ is represented by } Q(X,Y) \} \sim \delta_Q\frac{\mathrm{Li}(x)}{h(-D)} \qquad \text{as } x \rightarrow \infty,
\label{eqn:PrimesOfQuadForms_Asymptotic}
\end{equation}
where $\delta_Q=1/2$ if $Q(X,Y)$ is properly equivalent to its opposite and $\delta_Q=1$ otherwise, and $h(-D)$ is the number of such forms of discriminant $-D$ up to $\mathrm{SL}_2$-equivalence.  To obtain an upper bound for the number of such primes, we let $F = \Q(\sqrt{-D})$, and we let $L$ be the ring class field of the order of the discriminant $-D$.  Thus $\mathrm{Gal}(L/F)$ is abelian.  Applying \eqref{eqn:range_BT_Weiss_2} and \cref{thm:BT_Weiss_sharp} to $L/F$, with $C$ equal to the singleton conjugacy class in $G$ corresponding to $Q(X,Y)$, we obtain the following.

\begin{cor} 
\label{cor:QuadForms}
Let $Q(X,Y)$ be a positive-definite primitive binary integral quadratic form with discriminant $-D$, and let $h(-D)$ be the number of such quadratic forms up to $\mathrm{SL}_2$-equivalence. For $x \gg D^{164}$,
\begin{equation}
\#\{ p \leq x\colon p \textup{ is represented by } Q(X,Y) \} \ll \frac{\mathrm{Li}(x)}{h(-D)}
\label{eqn:QuadForms_Bound1}
\end{equation}
with an absolute implied constant.  Also, if $D$ is sufficiently large, then for $x \gg D^{695}$,
\[
\#\{ p \leq x\colon p \textup{ is represented by } Q(X,Y) \} < 2\delta_Q \,\frac{\mathrm{Li}(x)}{h(-D)},
\]
where $\delta_Q=1/2$ if $Q(X,Y)$ is properly equivalent to its opposite and $\delta_Q=1$ otherwise.
\end{cor}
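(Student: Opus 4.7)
The plan is to translate the counting problem for primes represented by $Q$ into a Chebotarev count for a specific abelian extension, and then invoke \cref{thm:BT_Weiss,thm:BT_Weiss_sharp}. Write $D = f^2|D_F|$ where $-D_F$ is the fundamental discriminant of the imaginary quadratic field $F := \Q(\sqrt{-D})$ and $f\geq 1$ is the conductor of the order of discriminant $-D$. Let $\cO_f \subset \cO_F$ be that order and let $L$ be its ring class field; then $G := \mathrm{Gal}(L/F) \cong \Class(\cO_f)$ is abelian of order $h(-D)$, and by the classical correspondence cited before \eqref{eqn:PrimesOfQuadForms_Asymptotic}, a rational prime $p \nmid D$ is represented by $Q(X,Y)$ precisely when there exists a prime $\kp$ of $F$ with $\mathrm{N}_{F/\Q}\kp = p$ whose Artin class $[\tfrac{L/F}{\kp}]$ equals the singleton conjugacy class $C \subset G$ attached to $Q$. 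If $Q$ is not properly equivalent to its opposite then each such $p$ lifts to a unique $\kp$; otherwise both primes of $F$ above a split $p$ lie over $C$. Either way, up to at most $\omega(D) \ll \log D$ ramified primes,
\[
\#\{p \leq x : p \text{ represented by } Q(X,Y)\} = \delta_Q\,\pi_C(x,L/F).
\]

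Next I would take the abelian subgroup in \cref{thm:BT_Weiss,thm:BT_Weiss_sharp} to be $H = G$ itself, whose fixed field is $K = F$; thus $[K:\Q] = 2$ and $D_K = |D_F|$. Since $L/F$ is the class field of the modulus $f\cO_F$ (and $F$ has no real places), every character $\chi \in \widehat{H}$ has conductor $\mathfrak{f}_\chi \mid f\cO_F$, so $\mathrm{N}_{K/\Q}\mathfrak{f}_\chi \mid f^2$ and hence $\mathcal{Q}(L/K) \leq f^2$. Using $D = f^2 |D_F|$, this yields
\[
D_K^{164}\cQ^{123} \leq |D_F|^{164} f^{246} \leq D^{164}, \qquad D_K^{695}\cQ^{522} \leq |D_F|^{695} f^{1044} \leq D^{695}.
\]
Because $[K:\Q]=2$, every remaining term in \eqref{eqn:range_BT_Weiss_2} and \eqref{eqn:range_BT_Weiss_sharp} carries only an absolute constant factor $[K:\Q]^{c[K:\Q]}$ and is easily seen to be dominated by $D^{164}$, respectively $D^{695}$.

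Applying \cref{thm:BT_Weiss} (with the improved range \eqref{eqn:range_BT_Weiss_2}) then gives $\pi_C(x,L/F) \ll \mathrm{Li}(x)/h(-D)$ with an absolute implied constant for $x \gg D^{164}$; the $O(\log D)$ correction from ramified primes is absorbed since $h(-D) \ll \sqrt{D}\log D$ and $x \geq D^{164}$. This proves \eqref{eqn:QuadForms_Bound1}. For the sharp bound I would invoke \cref{thm:BT_Weiss_sharp}: since $K/\Q$ is itself a normal quadratic extension, the normal-tower hypothesis is trivially satisfied (take $n = 1$), so the error term may be dropped, giving $\pi_C(x,L/F) < 2\,\mathrm{Li}(x)/h(-D)$ for $x \gg D^{695}$ with $D$ sufficiently large. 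Multiplying by $\delta_Q$ yields $\#\{p \leq x : p \text{ represented by } Q(X,Y)\} < 2\delta_Q\,\mathrm{Li}(x)/h(-D)$, absorbing the $O(\log D)$ ramified correction into the slack from the strict inequality.

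The only step that requires genuine care is the conductor bound $\mathcal{Q}(L/K) \leq f^2$, which rests on identifying the ring class field of conductor $f$ as the class field of $F$ for the modulus $f\cO_F$; everything else reduces to bookkeeping and a direct comparison of exponents.
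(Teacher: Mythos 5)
Your proposal is correct and follows exactly the route the paper takes (the paper's entire ``proof'' is the sentence preceding the corollary: take $F=\Q(\sqrt{-D})$, $L$ the ring class field of the order of discriminant $-D$, $H=G$ so $K=F$, and apply \eqref{eqn:range_BT_Weiss_2} and \cref{thm:BT_Weiss_sharp}). Your bookkeeping --- $D_K=|D_F|$, $\cQ(L/K)\leq f^2$ via the modulus $f\cO_F$, the exponent comparison giving $D^{164}$ and $D^{695}$, the normal-tower condition for the quadratic $K/\Q$, and the $\delta_Q$ accounting --- supplies precisely the details the paper leaves implicit.
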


\begin{remark}
Note that \eqref{eqn:BT_CDT} also implies \eqref{eqn:QuadForms_Bound1} in the much more restricted range $x \gg D^{O(D^{1/2+\epsilon})}$ for any fixed $\epsilon > 0$.  On the other hand, \cref{cor:QuadForms} gives the range $x\gg D^{O(1)}$, which is comparable (up to the quality of the exponent) to the range $x \gg D^{1+\epsilon}$ predicted by the generalized Riemann hypothesis for Hecke $L$-functions.
\end{remark}

We use \cref{thm:BT_Weiss} to improve the best unconditional upper bounds for two outstanding conjectures of Lang and Trotter \cite{LT}.  Let
\begin{equation}
\label{eqn:newform}
f(z)=\sum_{n=1}^{\infty} a_f(n)e^{2\pi i n z}
\end{equation}
be a holomorphic cusp form of even integral weight $k_f\geq2$ and level $N_f$; for simplicity, we assume that $a_f(n)\in\Z$ for all $n\geq1$.  Suppose that $f$ does not have complex multiplication, that the nebentypus of $f$ is trivial, and that $f$ is a newform (i.e., $f$ is a normalized eigenform for the Hecke operators $T_p$ for $p\nmid N_f$ and $U_p$ for $p\mid N_f$).  Fix $a\in\Z$, and let
\begin{equation}
\label{eqn:pi_f}
\pi_f(x,a)=\#\{p\leq x :a_f(p)=a\}.
\end{equation}
Lang and Trotter conjectured that as $x\to\infty$, we have that
\[
\pi_f(x,a)\sim c_{f,a}\begin{cases}
\sqrt{x}(\log x)^{-1}&\mbox{if $k_f=2$},\\
1&\mbox{if $k_f\geq4$},
\end{cases}
\]
where $c_{f,a}\geq0$ is a certain constant depending on $f$ and $a$ alone.

In the special case where $k_f=2$, Elkies \cite{Elkies} proved that $\pi_f(x,0)\ll_{N_f} x^{3/4}$.  In all other cases, Serre proved in 1981 that
\[
\pi_f(x,a)\ll_{N_f}\frac{x}{(\log x)^{1+\delta}}
\]
for any $\delta<1/4$; following the ideas of M. R. Murty, V. K. Murty, and Saradha \cite{MMS}, Wan \cite{Wan} improved the range of $\delta$ in 1990 to any $\delta<1$.  This was further sharpened by V. K. Murty \cite{VKM} in 1997; he proved\footnote{Theorem 5.1 of \cite{VKM} actually claims a stronger result, but a step in the proof seems not to be justified. The best that the argument appears to give is what we have stated above; see \cref{sec:LT_proofs_1} for details.} that
\begin{equation}
\label{eqn:LT_111}
\pi_f(x,a)\ll_{N_f} \frac{x(\log \log x)^3}{(\log x)^2}.
\end{equation}
Using \cref{thm:BT_Weiss}, we give a modest improvement\footnote{Note that we recover Murty's claimed result \cite[Theorem 5.1]{VKM}.}.
\begin{thm}
\label{thm:LT_1}
Let $f$ be a newform of even integral weight $k_f\geq2$, level $N_f$, and trivial nebentypus with integral coefficients. If $\pi_f(x,a)$ is given by \eqref{eqn:pi_f}, then
\[
\pi_f(x,a)\ll_{N_f}\frac{x(\log\log x)^2}{(\log x)^2}.
\]
\end{thm}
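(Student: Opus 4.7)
The plan is to follow the approach pioneered by Serre and refined by M.~R.~Murty--V.~K.~Murty--Saradha~\cite{MMS}, Wan~\cite{Wan}, and V.~K.~Murty~\cite{VKM}, substituting our improved \cref{thm:BT_Weiss} for the weaker Chebotarev variant of Brun--Titchmarsh~\eqref{eqn:BT_CDT} that was available to those authors. Fix a prime $\ell$ to be optimized later, let $\rho_{f,\ell}\colon\mathrm{Gal}(\bar\Q/\Q)\to\GL_2(\bF_\ell)$ denote the mod-$\ell$ Galois representation attached to $f$, let $L_\ell$ be its fixed field, and set $G_\ell=\mathrm{Gal}(L_\ell/\Q)$. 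For each prime $p\nmid N_f\ell$ we have $\mathrm{tr}\,\rho_{f,\ell}(\mathrm{Frob}_p)\equiv a_f(p)\pmod{\ell}$, so $a_f(p)=a$ forces $\mathrm{Frob}_p$ into the conjugation-invariant set $T_a^{(\ell)}=\{g\in G_\ell:\mathrm{tr}(g)\equiv a\pmod{\ell}\}$. Because $f$ is non-CM with rational integer coefficients, Ribet's open-image theorem gives $G_\ell\supseteq\SL_2(\bF_\ell)$ for all sufficiently large $\ell$, and a direct matrix count yields $|T_a^{(\ell)}|/|G_\ell|\ll 1/\ell$.

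To each conjugacy class $C\subseteq T_a^{(\ell)}$ I would apply \cref{thm:BT_Weiss}: pick $g\in C$ and take $H=\langle g\rangle$, so that $K=L_\ell^H$ has $[K:\Q]=|G_\ell|/|H|$. Standard bounds on the Artin conductor of $\rho_{f,\ell}$ (which divides $N_f\ell^{O(1)}$) combined with the conductor-discriminant formula give a polynomial-in-$|G_\ell|$ bound for $\log D_{L_\ell}$. Weiss's inequality $D_K\cQ(L_\ell/K)\leq D_{L_\ell}^{1/\varphi(|H|)}$ (valid for cyclic $H$) then bounds $D_K\cQ(L_\ell/K)$ in terms of $\ell$, $N_f$, and $|H|$. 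Substituting these estimates into~\eqref{eqn:range_BT_Weiss_2} yields a sufficient condition of the form $\log x\gg F(\ell,N_f)$ under which \cref{thm:BT_Weiss} applies uniformly to every conjugacy class $C\subseteq T_a^{(\ell)}$.

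Summing the resulting bounds from \cref{thm:BT_Weiss} over $C\subseteq T_a^{(\ell)}$ yields
\[
\pi_f(x,a)\leq\sum_{C\subseteq T_a^{(\ell)}}\pi_C(x,L_\ell/\Q)\ll\frac{|T_a^{(\ell)}|}{|G_\ell|}\mathrm{Li}(x)\ll\frac{x}{\ell\log x}.
\]
The essential gain over the prior approach is that the admissible range in~\eqref{eqn:range_BT_Weiss_2} is polynomial in $D_K\cQ$, rather than multiplied by an additional $\log\log$ factor as in~\eqref{eqn:BT_CDT}; this permits a larger choice of $\ell$, namely $\ell\asymp\log x/(\log\log x)^2$, which delivers the desired bound $\pi_f(x,a)\ll_{N_f}x(\log\log x)^2/(\log x)^2$. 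This recovers the bound claimed by V.~K.~Murty in~\cite[Theorem~5.1]{VKM} and improves~\eqref{eqn:LT_111}.

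The hard part is to verify the range condition uniformly across all conjugacy classes in $T_a^{(\ell)}$. Weiss's bound $D_K\cQ(L_\ell/K)\leq D_{L_\ell}^{1/\varphi(|H|)}$ degrades badly when $g$ has small cyclic order, so exceptional classes (e.g., elements with repeated eigenvalues, short-order elements, or elements close to the center) must be handled separately---either by replacing $\langle g\rangle$ with a larger abelian subgroup (such as the centralizer torus of $g$ when $g$ is semisimple with distinct eigenvalues), or by observing that such classes contribute negligibly to $|T_a^{(\ell)}|$. This uniformity is precisely the step that was not fully justified in~\cite{VKM}.
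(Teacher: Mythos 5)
There is a genuine gap, and it is quantitative rather than a matter of handling exceptional classes. When you apply \cref{thm:BT_Weiss} directly to $L_\ell/\Q$ with Galois group $G_\ell\supseteq\SL_2(\bF_\ell)$, the field $K=L_\ell^{H}$ has degree $n_K=|G_\ell|/|H|$, and the largest abelian subgroups of $G_\ell$ (the maximal tori, or the centralizers you propose as a fix) have order $O(\ell^2)$ while $|G_\ell|\gg \ell^3$ (indeed $\asymp\ell^4$ when $k_f=2$). Hence $n_K\gg \ell^2$ for \emph{every} admissible choice of $H$ and every non-central class $C\subseteq T_a^{(\ell)}$. The range \eqref{eqn:range_BT_Weiss_2} contains the term $[K:\Q]^{68[K:\Q]}$ (and Minkowski already forces $\log D_K\gg n_K$), so your sufficient condition is at best $\log x\gg \ell^2\log\ell$, i.e.\ $\ell\ll(\log x/\log\log x)^{1/2}$. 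Your final bound $\pi_f(x,a)\ll x/(\ell\log x)$ then only gives roughly $x/(\log x)^{3/2}$, far short of $x(\log\log x)^2/(\log x)^2$. The centralizer-torus substitute and the treatment of short-order elements do not repair this: the obstruction is not the $\varphi(|H|)$ in Weiss's inequality but the index $|G_\ell|/|H|$ itself.

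The paper circumvents this by the Serre--Murty--Murty--Saradha--Zywina descent: one restricts to primes $p$ for which $\ell$ splits in $\Q(\omega_p)$, so that $\rho_{f,\ell}(\mathrm{Frob}_p)$ is conjugate into a Borel subgroup $B$, and then transfers the count to the \emph{abelian} extension $L^U/L^B$ (with $U$ the unipotent radical), whose base field $L^B$ has degree only $[G_\ell:B]=\ell+1$. Applying \cref{thm:mk} there gives the range $\log x\gg\ell\log(N_f\ell)$, permitting $\ell\asymp\log x/\log(N_f\log x)$ and the bound $\pi_f(x,a;\ell)\ll x\log(N_f\log x)/(\log x)^2$. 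The price of the splitting restriction is that one must average over $t\asymp\log\log x$ auxiliary primes $\ell_j$ (Wan's inequality, \eqref{eqn:pi_f_inequality}) so that almost every $p$ with $a_f(p)=a$ is caught by some $\ell_j$; this averaging, not the choice $\ell\asymp\log x/(\log\log x)^2$, is where the second factor of $\log\log x$ comes from. Your proposal omits both the splitting condition and the descent to $L^U/L^B$, and without them the argument cannot reach the stated exponent of $\log x$.
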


We also consider a different (but closely related) conjecture of Lang and Trotter regarding the Frobenius fields of an elliptic curve.  Let $E/\Q$ be an elliptic curve of conductor $N_E$ without complex multiplication.  For a prime $p\nmid N$, let $\Pi_p$ be the Frobenius endomorphism of $E/\mathbb{F}_p$.  Defining $a_E(p)=p+1-\#E(\mathbb{F}_p)$, we have that $\Pi_p^2-a_E(p)\Pi_p+p=0$.  By Hasse, we know that $|a_E(p)|<2\sqrt{p}$, so $\Q(\Pi_p)$ in $\mathrm{End}(E/\mathbb{F}_p)\otimes_{\Z}\Q$ is an imaginary quadratic field.  For a fixed imaginary quadratic field $k$ with absolute discriminant $D_k$, let
\begin{equation}
\label{eqn:pi_E}
\pi_E(x,k)=\#\{p\leq x:\Q(\Pi_p)\cong k\}.
\end{equation}
Lang and Trotter conjectured that as $x\to\infty$,
\[
\pi_E(x,k)\sim c_{E,k}\frac{\sqrt{x}}{\log x},
\]
where $c_{E,k}$ is a certain constant depending on $E$ and $k$ alone.  Using the square sieve, Cojocaru, Fouvry, and M. R. Murty \cite{CFM} proved that
\[
\pi_E(x,k)\ll_{N_E,k}\frac{x(\log\log x)^{13/12}}{(\log x)^{25/24}}.
\]
Using V. K. Murty's version of the Chebotarev density theorem and Serre's method of mixed representations (see \cite{Serre}), Zywina \cite{Zywina} improved this bound to
\begin{equation}
\label{eqn:LT_2}
\pi_E(x,k)\ll_{N_E,k} \frac{x(\log \log x)^2}{(\log x)^2}.
\end{equation}
Using \cref{thm:BT_Weiss}, we establish a modest improvement to \eqref{eqn:LT_2}.
\begin{thm}
\label{thm:LT_2}
Let $E/\Q$ be an elliptic curve of conductor $N_E$ and let $k$ be a fixed imaginary quadratic number field. If $\pi_E(x,k)$ is defined by \eqref{eqn:pi_E} then
\[
\pi_E(x,k)\ll_{N_E,k}\frac{x\log\log x}{(\log x)^2}.
\]
\end{thm}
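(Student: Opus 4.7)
The plan is to follow the approach of Zywina~\cite{Zywina}, who combines Serre's method of mixed Galois representations with a Chebotarev Brun--Titchmarsh inequality, while substituting our Theorem~\ref{thm:BT_Weiss} for the Lagarias--Montgomery--Odlyzko bound~\eqref{eqn:BT_CDT}. The improved range of validity in Theorem~\ref{thm:BT_Weiss} permits the auxiliary prime parameter to be chosen slightly larger than Zywina's, directly yielding the extra $\log\log x$ saving.

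For each prime $\ell$ with $\ell\nmid 2N_E D_k$ and $\ell>\ell_0(E)$ (a threshold depending only on $E$), Serre's open-image theorem guarantees that the mod-$\ell$ representation $\rho_{E,\ell}\colon \mathrm{Gal}(\bar{\Q}/\Q)\to\mathrm{GL}_2(\mathbb{F}_\ell)$ has image of bounded index. Following Zywina, I would form a Galois extension $L_\ell/\Q$ from $\Q(E[\ell])$, the field $k$, and an appropriate ray (or ring) class field of $k$ of conductor dividing $\ell$, together with an abelian subgroup $H_\ell\leq G_\ell:=\mathrm{Gal}(L_\ell/\Q)$ and a union of conjugacy classes $C_\ell\subseteq G_\ell$ (contained in conjugates of $H_\ell$) such that $\mathrm{Frob}_p\in C_\ell$ whenever $\Q(\Pi_p)\cong k$, with $|C_\ell|/|G_\ell|\ll 1/\ell$. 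Setting $K_\ell=L_\ell^{H_\ell}$, the ramification of $L_\ell/\Q$ is confined to primes dividing $\ell N_E D_k$, and the conductor--discriminant formula yields bounds of the shape $[K_\ell:\Q]\ll\ell^{O(1)}$, $\log D_{K_\ell}\ll_{N_E,k}\ell^{O(1)}\log\ell$, and $\mathcal{Q}(L_\ell/K_\ell)\ll_{N_E,k}\ell^{O(1)}$, with absolute exponents.

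With these estimates the range condition~\eqref{eqn:range_BT_Weiss} reduces to $\log x\gg \ell^{O(1)}\log\ell$. Applying Theorem~\ref{thm:BT_Weiss} to each conjugacy class comprising $C_\ell$ and summing yields
\[
\pi_E(x,k)\leq \pi_{C_\ell}(x,L_\ell/\Q)+O_{N_E,k}(1)\ll_{N_E,k}\frac{|C_\ell|}{|G_\ell|}\frac{x}{\log x}\ll_{N_E,k}\frac{1}{\ell}\cdot\frac{x}{\log x},
\]
valid for every admissible $\ell$ in the range above. Choosing $\ell$ to be the largest prime satisfying $\ell^{O(1)}\log\ell\leq c\log x$ for a sufficiently small absolute constant $c>0$ forces $\ell\asymp \log x/\log\log x$, and substituting gives the claimed bound $\pi_E(x,k)\ll_{N_E,k} x\log\log x/(\log x)^2$.

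The principal obstacle is verifying that Zywina's constructions of $L_\ell$, $H_\ell$, and $C_\ell$ satisfy the conductor--discriminant estimates above with uniform exponents, while preserving the bound $|C_\ell|/|G_\ell|\ll 1/\ell$; this is essentially a careful bookkeeping of what is already present in \cite{Zywina}, combined with the polynomial dependence of our range condition on $D_{K_\ell}$ and $\mathcal{Q}(L_\ell/K_\ell)$. Zywina's original argument used \eqref{eqn:BT_CDT}, whose range $\log x\gg(\log D_{L_\ell})(\log\log D_{L_\ell})$ restricts $\ell$ to a slightly smaller interval than we require. Removing the $\log\log D_{L_\ell}$ factor --- which is exactly the improvement that Theorem~\ref{thm:BT_Weiss} affords for the relevant fields $K_\ell$, since $[K_\ell:\Q]\ll (\log D_{K_\ell})/\log\log D_{K_\ell}$ holds in this setting --- is precisely what enables $\ell$ to reach size $\log x/\log\log x$ and produces the saving of one factor of $\log\log x$ over~\eqref{eqn:LT_2}.
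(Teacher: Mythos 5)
Your overall strategy is the one the paper takes: follow Zywina's proof of his Theorem 1.3(ii) and substitute \cref{thm:BT_Weiss} for his Chebotarev input so that the auxiliary prime $\ell$ can be pushed up to size $\log x/\log\log x$. The gap is in the quantitative step that is the entire point of the theorem. You record the field data only up to unspecified absolute exponents, $[K_\ell:\Q]\ll\ell^{O(1)}$ and $\log D_{K_\ell}\ll_{N_E,k}\ell^{O(1)}\log\ell$, and then assert that the range condition ``$\log x\gg\ell^{O(1)}\log\ell$'' forces $\ell\asymp\log x/\log\log x$. That implication is false unless every relevant exponent equals $1$: the logarithm of \eqref{eqn:range_BT_Weiss} is $\gg[K_\ell:\Q]\log[K_\ell:\Q]+\log D_{K_\ell}+\log\mathcal{Q}$, so if, say, $[K_\ell:\Q]\asymp\ell^{2}$ the admissible range is only $\log x\gg\ell^{2}\log\ell$, the largest usable $\ell$ is $\asymp(\log x/\log\log x)^{1/2}$, and the resulting bound $\ell^{-1}x/\log x\asymp x(\log\log x)^{1/2}/(\log x)^{3/2}$ is \emph{weaker} than Zywina's \eqref{eqn:LT_2}. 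Worse, your framework cannot deliver exponent $1$: since $L_\ell\supseteq\Q(E[\ell])$ and $\mathrm{Gal}(\Q(E[\ell])/\Q)$ has bounded index in $\mathrm{GL}_2(\mathbb{F}_{\ell})$ (order $\asymp\ell^4$, largest abelian subgroups of order $\asymp\ell^2$), every abelian subgroup $H_\ell\leq G_\ell$ has index $\gg\ell^{2}$, whence $[K_\ell:\Q]=[G_\ell:H_\ell]\gg\ell^{2}$. So a direct application of \cref{thm:BT_Weiss} to $L_\ell/\Q$ with an abelian subgroup of $G_\ell$, as you describe, is structurally capped at $\ell\ll(\log x/\log\log x)^{1/2}$.

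The missing idea is the subgroup reduction used in the proof of \cref{thm:LT_1} (Zywina's Lemma 2.7): before invoking any Brun--Titchmarsh bound, replace the count of $p$ with $\mathrm{Frob}_p\in C_\ell$ by a count of prime ideals of the fixed field $L_\ell^{B}$ of a Borel-type subgroup $B$, lying in the abelian extension $L_\ell^{U}/L_\ell^{B}$ with group $B/U$, at the cost of an error term $O\big(n_{L^B}(\sqrt{x}/\log x+\log M)\big)$ (your $O_{N_E,k}(1)$ is also too optimistic, though this term is harmless). The base field then has degree $[G_\ell:B]\ll_k\ell$ \emph{linearly}, and \cref{thm:mk} --- whose range $\log x\gg n_K\log(M(L/K)n_K)$ is linear in $n_K$ and only logarithmic in $[L:K]$, $D_K^{1/n_K}$, and the ramified primes (all dividing $\ell N_E D_k$) --- gives the admissible range $\log x\gg_{N_E,k}\ell\log\ell$. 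Only then does the choice $y=\frac{c}{h_k}\frac{\log x}{\log((D_k/h_k)\log x)}\asymp_k\log x/\log\log x$ made in the paper become legitimate and yield $\pi_E(x,k)\ll_{N_E,k}x\log\log x/(\log x)^2$.
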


\begin{remark}
A similar infinite Galois extension problem is described by Theorem 10 in Section 4.1 of \cite{Serre}, and \cref{thm:BT_Weiss} gives a similar improvement.
\end{remark}

In Sections \ref{sec:Preliminaries}-\ref{sec:NegligibleZeros}, we discuss necessary results on Hecke $L$-functions and provide the analytic setup for the proofs of Theorems \ref{thm:BT_Weiss} and \ref{thm:BT_Weiss_sharp}.  These results are then proved in \cref{sec:Proof_BT_easy} and Sections \ref{sec:Proof_BT_SiegelZeroExists}-\ref{sec:Proof_BT_NoSiegelZero}, respectively.  Finally, we prove Theorems \ref{thm:LT_1} and \ref{thm:LT_2} in Section \ref{sec:LT_proofs_1}.

\subsection*{Acknowledgements}
The authors thank John Friedlander, V. K. Murty, and Ken Ono for their comments and Tristan Freiberg for starting our interest in the Brun-Titchmarsh problem for number fields. The first author is supported by a NSF Mathematical Sciences Postdoctoral Research Fellowship.

\section{Initial Setup}
\label{sec:Preliminaries}

\subsection{Notation} 
\label{subsec:notation}
For a number field $F$, we will use the following notation throughout:
\begin{itemize}
	\item $\mathcal{O}_F$ is the ring of integers of $F$.
	\item $n_F = [F:\mathbb{Q}]$ is the degree of $F/\mathbb{Q}$.
	\item $D_F = |\mathrm{disc}(F/\Q)|$ is the absolute value of the discriminant of $F$.
	\item $\N_{F/\mathbb{Q}}$ is the absolute field norm of $F$. 
	\item $\zeta_F(s)$ is the Dedekind zeta function of $F$.
	\item $\kp$ is a prime ideal of $F$. 
	\item $\kn$ is an integral ideal of $F$. 
	\item $\Lambda_F(\kn)$ is the von Mangoldt $\Lambda$-function for $F$ given by
\begin{equation*}
\Lambda_F(\mathfrak{n}) = 
	\begin{cases}
		\log \N_{F/\Q}\mathfrak{p} & \text{if $\mathfrak{n}$ is a power of a prime ideal $\mathfrak{p}$,} \\
		0 & \text{otherwise.}
	\end{cases}
	\label{def:vonMangoldt}
\end{equation*}
\end{itemize}
If it is clear from context, we will write $\N = \N_{F/\Q}$ for convenience. 

We also adhere to the convention that all implied constants in all asymptotic inequalities $f\ll g$ or $f=O(g)$ are absolute.  If an implied constant depends on a field-independent parameter, such as $\epsilon$, then we use $\ll_{\epsilon}$ and $O_{\epsilon}$ to denote that the implied constant depends at most on $\epsilon$.  All implied constants will be effectively computable.

\subsection{Prime ideal counting functions}
\label{subsec:PrimeCounters}

We briefly recall the definition of an Artin $L$-function from \cite[Chapter 2, Section 2]{MR1482805}.  Let $L/F$ be a Galois extension of number fields with Galois group $G$.  For each prime ideal $\kp$ of $F$, and a prime ideal $\kP$ of $L$ lying above $\kp$, we define the decomposition group $D_{\kP}$ to be $\mathrm{Gal}(L_{\kP}/F_{\kp})$, where $L_{\kP}$ (resp. $K_{\kp}$) is the completion of $L$ (resp. $K$) at $\kP$ (resp. $\kp$).  We have a map $D_{\kP}$ to $\mathrm{Gal}(k_{\kP}/k_{\kp})$ (the Galois group of the residue field extension), which is surjective by Hensel's lemma.  The kernel of this map is the inertia group $I_{\kP}$.  We thus have the exact sequence
\[
1\to I_{\kP}\to D_{\kP}\to\mathrm{Gal}(k_{\kP}/k_{\kp})\to 1.
\]
The group $\mathrm{Gal}(k_{\kP}/k_{\kp})$ is cyclic with generator $x\mapsto x^{\mathrm{N}\kp}$, where $\mathrm{N}\kp$ is the cardinality of $k_{\kp}$.  We can choose an element $\sigma_{\kP}\in D_{\kP}$ whose image in $\mathrm{Gal}(k_{\kP}/k_{\kp})$ is this generator.  We call $\sigma_{\kP}$ a Frobenius element at $\kP$; it is well-defined modulo $I_{\kP}$.  We have that $I_{\kP}$ is trivial for all unramified $\kp$, and for these $\kp$, $\sigma_{\kP}$ is well-defined.  For $\kp$ unramified, we denote by $\sigma_{\kp}$ the conjugacy class of Frobenius elements at primes $\kP$ above $\kp$.

Let $\rho:G\to\mathrm{GL}_n(\mathbb{C})$ be a representation of $G$, and let $\psi$ denote its character.  Let $V$ be the underlying complex vector space on which $\rho$ acts, and let $V^{I_{\kP}}$ be the subspace of $V$ on which $I_{\kP}$ acts trivially.  We now define
\[
L_{\kp}(s,\psi,L/F)=\begin{cases}
\det(I_n - \rho(\sigma_{\kp})\mathrm{N}\kp^{-s})^{-1}&\mbox{if $\kp$ is unramified in $L$},\\
\det(I_n-\rho(\sigma_{\kP})\mid_{V^{I_{\kP}}}\mathrm{N}\kp^{-s})^{-1}&\mbox{if $\kp$ is ramified in $L$}.
\end{cases}
\]
This is well-defined for all $\kp$, which allows us to define the Artin $L$-function
\[
L(s,\psi,L/F)=\prod_{\kp}L_{\kp}(s,\psi,L/F)
\]
for $\Re\{s\} > 1$. Now, for a conjugacy class $C \subseteq G$, let $g_C \in C$ be arbitrary. Define
\begin{equation}
Z_C(s) := - \frac{|C|}{|G|} \sum_{\psi} \bar{\psi}(g_C) \frac{L'}{L}(s,\psi,L/F),
\label{def:ZC}
\end{equation}
where $\psi$ runs over irreducible characters of $G$ and $L(s,\psi,L/F)$ is the associated Artin $L$-function. Note the definition of $Z_C(s)$ does not depend on the choice of $g_C$ since $\psi$ is the trace of the representation $\rho$ and $g_C$ is conjugate to any other choice. By orthogonality relations for characters (see \cite[Section 3]{Heilbronn} for example),
\begin{equation}
Z_C(s) = \sum_{\kn \subseteq \cO_F} \Lambda_F(\kn) \Theta_C(\kn)  (\N\kn)^{-s},
\label{eqn:ZC_orthogonality}
\end{equation}
where $\Theta_C(\kn)$ is supported on integral ideals $\kn$ which are powers of a prime ideal; in particular, for prime ideals $\kp$ unramified in $L$ and $m \geq 1$, 
\begin{equation}
	\Theta_C(\kp^m) 
	 = \begin{cases}
 			1 & \text{if $[\frac{L/F}{\kp}]^m \subseteq C$,} \\
 			0 & \text{otherwise,}
		 \end{cases}
		 \label{def:Indicator_Conjugacy}
\end{equation}
and  $0 \leq \Theta_C(\kp^m) \leq 1$ if $\kp$ ramifies in $L$. (This discussion and definition of $\Theta_C(\, \cdot \,)$ is also contained in \cite[Section 3]{LMO}.)  For $x > 1$, define
\begin{equation}
		\psi_C(x) := \sum_{\substack{\N\kn < x} }  \Lambda_F(\kn) \Theta_C(\kn),
		\label{def:PrimeCountingFunctions}
\end{equation}
where the sum is over integral ideals $\kn$ of $F$. By standard arguments, this prime ideal counting function is related to $\pi_C(x, L/F)$ given by \eqref{def:pi_C}. Since we are only interested in an upper bound for $\pi_C(x,L/F)$, we give a simpler statement that suffices for our purposes. 
\begin{lem}
	\label{lem:Pi_to_Psi}
	If $x > x_0 > 3$, then
	\begin{equation*}
	\pi_C(x,L/F) \leq \frac{\psi_C(x)}{\log x} + \int_{x_0}^x \frac{\psi_C(t)}{t \log^2 t} dt + O(n_F x_0). 
	\end{equation*}
\end{lem}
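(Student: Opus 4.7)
The plan is to pass from $\pi_C(x,L/F)$ to the Chebyshev-type sum $\psi_C(x)$ by first trading the $\Theta$-weighted prime counting function for its logarithmically weighted analogue and then applying Abel summation, exactly as in the classical passage from $\pi(x)$ to $\psi(x)$.

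First I would dominate $\pi_C(x,L/F)$ by a sum over all prime ideals (not just the unramified ones). Since $C$ is a conjugacy class, for every unramified prime $\kp$ the condition $[\frac{L/F}{\kp}] = C$ is equivalent to $\Theta_C(\kp)=1$ by \eqref{def:Indicator_Conjugacy}. Because $0 \leq \Theta_C(\kp) \leq 1$ for ramified $\kp$ as well, dropping the unramified condition only enlarges the sum, so
\[
\pi_C(x,L/F) \leq \widetilde{\pi}_C(x) := \sum_{\N\kp \leq x} \Theta_C(\kp).
\]

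Next I would set $\vartheta_C(t) := \sum_{\N\kp \leq t} \Theta_C(\kp) \log \N\kp$ and apply Abel summation with weight $f(t) = 1/\log t$ on the interval $(x_0,x]$ to obtain
\[
\sum_{x_0 < \N\kp \leq x} \Theta_C(\kp) = \frac{\vartheta_C(x)}{\log x} - \frac{\vartheta_C(x_0)}{\log x_0} + \int_{x_0}^x \frac{\vartheta_C(t)}{t \log^2 t}\,dt.
\]
Because the contributions of genuine prime powers $\kp^m$ with $m \geq 2$ to $\psi_C$ are nonnegative, $\vartheta_C(t) \leq \psi_C(t)$ for all $t$, and dropping the negative term $-\vartheta_C(x_0)/\log x_0$ only helps.

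For the tail at the bottom, I would handle $\widetilde{\pi}_C(x_0)$ trivially: each rational prime $p \leq x_0$ has at most $n_F$ prime ideals of $F$ above it, so $\widetilde{\pi}_C(x_0) \leq n_F\,\pi(x_0) \leq n_F x_0$ using $\Theta_C \leq 1$. Assembling these pieces yields
\[
\pi_C(x,L/F) \leq \frac{\psi_C(x)}{\log x} + \int_{x_0}^x \frac{\psi_C(t)}{t \log^2 t}\,dt + O(n_F x_0),
\]
as claimed. There is no real obstacle; the only things to watch are (i) that ramified primes are folded in via the trivial nonnegativity of $\Theta_C$, and (ii) that replacing $\vartheta_C$ by $\psi_C$ is an inequality in the correct direction because we only seek an upper bound.
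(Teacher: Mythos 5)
Your proposal is correct and follows essentially the same route as the paper: dominate $\pi_C$ by the $\Theta_C$-weighted count over all prime ideals, apply partial summation to pass to $\theta_C$, bound $\theta_C \leq \psi_C$, and absorb the contribution below $x_0$ via the trivial bound $n_F\pi(x_0) \ll n_F x_0$. Your handling of the $-\vartheta_C(x_0)/\log x_0$ term is in fact slightly more careful than the paper's stated identity, but the conclusion is identical.
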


\begin{proof}

Let $t>1$.  We define
\[
\tilde{\pi}_C(t) := \sum_{\substack{\N\kp < t} } \Theta_C(\kp), \qquad \theta_C(t) := \sum_{\N\kp < t} \Theta_C(\kp) \log \N\kp,
\]
where the sums are over all prime ideals $\kp$ of $F$. First, observe that, by \eqref{def:Indicator_Conjugacy}, the only difference between $\tilde{\pi}_C(x)$ and $\pi_C(x,L/F)$ is the contribution from the prime ideals $\kp$ of $F$ ramified in $L$. Since $0 \leq \Theta_C(\kp) \leq 1$ for such prime ideals, we observe that
\begin{equation}
\pi_C(x,L/F) \leq \tilde{\pi}_C(x),
\label{eqn:Pi_to_TildePi}
\end{equation}
so it suffices to estimate $\tilde{\pi}_C(x)$. Using partial summation, we see that if $3 < x_0 < x$, then
\begin{equation}
\label{eqn:TidlePi_PartialSummation}	\tilde{\pi}_C(x) = \frac{ \theta_C(x) }{\log x} + \int_{x_0}^x \frac{\theta_C(t)}{t \log^2 t}dt + \tilde{\pi}_C(x_0). 
\end{equation}
Since there are at most $n_F$ prime ideals above a rational prime $p$, observe that
\begin{equation}
\tilde{\pi}_C(x_0) \leq \sum_{p < x_0} \sum_{\kp \mid (p)} 1 \leq n_F \sum_{p < x_0} 1 \ll \frac{n_F x_0}{\log x_0} \ll n_F x_0. 
\label{eqn:Pi_Bound}
\end{equation}
Moreover, $\theta_C(t) \leq \psi_C(t)$ for all $t > 1$. Combining these observations with \eqref{eqn:Pi_to_TildePi} and \eqref{eqn:TidlePi_PartialSummation} yields the desired result.  
\end{proof}


\subsection{Choice of Weight}
Let us define a weight function and describe its properties. This choice of weight can be regarded as a smoothed version of Maynard's weight \cite[Equation (5.6)]{Maynard}. It will be used to count prime ideals with norm between $x^{1/2}$ and $x$.

\begin{lem} \label{lem:WeightChoice}

For any $x \geq 3, \epsilon \in (0,1/4)$, and positive integer $\ell \geq 1$, select
\[
A = \frac{\epsilon}{2 \ell \log x}. 
\]
There exists a real-variable function $f(t)  = f(t; x, \ell, \epsilon)$ such that:
\begin{enumerate}[(i)]
	\item $0 \leq f(t) \leq 1$ for all $t \in \R$, and $f(t) \equiv 1$ for $\tfrac{1}{2} \leq t \leq 1$.
	\item The support of $f$ is contained in the interval $[\tfrac{1}{2} - \frac{\epsilon}{\log x}, 1 +  \frac{\epsilon}{\log x}]$. 
	\item Its Laplace transform $F(z) = \int_{\R} f(t) e^{-zt}dt$ is entire and is given by
			\begin{equation}	
				F(z) = e^{-(1+ 2\ell A)z} \cdot \Big( \frac{1-e^{(\frac{1}{2}+2\ell A)z}}{-z} \Big) \Big( \frac{1-e^{2Az}}{-2Az} \Big)^{\ell}.
				\label{eqn:WeightLaplace}
			\end{equation}
	\item Let $s = \sigma + i t \in \mathbb{C}, \sigma > 0$ and $\alpha$ be any real number satisfying $0 \leq \alpha \leq \ell$. Then
	\[
	|F(-s\log x)|\leq 
		\displaystyle\frac{e^{\sigma \epsilon} x^{\sigma}}{|s| \log x} \cdot \big( 1 + x^{-\sigma/2} \big) \cdot  \Big( \frac{2\ell}{\epsilon|s|} \Big)^{\alpha}. 
	\]
	\item If $s = \sigma + i t \in \mathbb{C}$ and $\sigma > 0$, then
	\[
	|F(-s\log x)| \leq e^{\sigma \epsilon} x^{\sigma}.
	\]
		  Moreover, 
		  \[
		  1/2 < F(0) < 3/4, \qquad F(-\sigma\log x) \leq  \frac{e^{\epsilon} x^{\sigma} }{\sigma \log x}. 
		  \]
	\item Let $s = -\tfrac{1}{2}+it \in \mathbb{C}$. Then
	\[
	|F(-s\log x)| \leq  \frac{5 x^{-1/4}}{\log x} \Big( \frac{2\ell}{\epsilon}\Big)^{\ell} (1/4+t^2)^{-\ell/2}.
	\]
\end{enumerate}
\end{lem}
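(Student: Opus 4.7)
The plan is to construct $f$ as an explicit shifted iterated convolution of indicator functions, and then derive each bound from the product structure in (iii). Define
\[
g_0(t) = \mathbf{1}_{[-(1/2+2\ell A),\,0]}(t), \qquad g_1(t) = \tfrac{1}{2A}\mathbf{1}_{[-2A,\,0]}(t),
\]
set $h = g_0 * g_1^{*\ell}$, and let $f(t) = h(t - 1 - 2\ell A)$. For (i)--(ii), observe that $g_1$ is a probability density on $[-2A,0]$, so $g_1^{*\ell}$ is a probability density on $[-2\ell A, 0]$; since $0 \le g_0 \le 1$, this gives $0 \le h \le 1$. The support of $h$ is the Minkowski sum $[-(1/2 + 4\ell A),\,0]$, and $h(t) = 1$ exactly when the set $[t,\,t + 1/2 + 2\ell A]$ contains $[-2\ell A,\,0]$, i.e., on $[-(1/2 + 2\ell A),\,-2\ell A]$. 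Shifting by $1 + 2\ell A$ and using $2\ell A = \epsilon/\log x$ yields (i) and (ii). For (iii), direct integration gives $\mathcal{L}[g_0](z) = (e^{(1/2+2\ell A)z} - 1)/z$ and $\mathcal{L}[g_1](z) = (e^{2Az} - 1)/(2Az)$; the convolution theorem and the shift rule $\mathcal{L}[h(\cdot - c)] = e^{-cz}\mathcal{L}[h]$ then yield the claimed product formula, which is entire because each factor has a removable singularity at $z = 0$.

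For the estimates (iv)--(vi) I would bound each factor of (iii) separately, using the two elementary inequalities
\[
\Bigl|\tfrac{1-e^w}{-w}\Bigr| \le \int_0^1 e^{t\Re w}\,dt \le e^{\max(0,\,\Re w)}, \qquad \Bigl|\tfrac{1-e^w}{-w}\Bigr| \le \tfrac{1 + e^{\Re w}}{|w|}.
\]
For (iv), write $z = -s\log x$ with $\sigma > 0$: the shift $e^{-(1+2\ell A)z}$ contributes $x^{(1+2\ell A)\sigma} = e^{\sigma\epsilon}x^\sigma$; the first fraction contributes at most $(1 + x^{-(1/2+2\ell A)\sigma})/(|s|\log x)$; and the $\ell$-th power factor admits both $\bigl|(1-e^{2Az})/(-2Az)\bigr| \le 1$ (since $\Re(2Az) \le 0$) and $\bigl|(1-e^{2Az})/(-2Az)\bigr| \le 2\ell/(\epsilon|s|)$ (using $A\log x = \epsilon/(2\ell)$). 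Splitting $\ell = \alpha + (\ell-\alpha)$ and applying the two bounds to the respective factors produces the claimed factor $(2\ell/(\epsilon|s|))^\alpha$. For (v), I would use $|F(-s\log x)| \le F(-\sigma\log x) = \int f(t) x^{\sigma t}\,dt$; since $f \le \mathbf{1}_{[1/2-\epsilon/\log x,\,1+\epsilon/\log x]}$, the integrand is at most $e^{\sigma\epsilon}x^\sigma$, and direct integration over the support yields the sharper bound with $1/(\sigma\log x)$. The equality $F(0) = \int f = 1/2 + \epsilon/\log x$, together with $x \ge 3$ and $\epsilon < 1/4$, gives $1/2 < F(0) < 3/4$. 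For (vi), the analysis at $\sigma = -1/2$ is analogous except $\Re(2Az) > 0$, so the bound $|1 - e^{(\cdot)z}| \le 2 e^{\Re((\cdot)z)}$ applies to both fractions; the $x^{\ell A}$ terms collapse via $\ell A \log x = \epsilon/2$, and the constant $5$ absorbs $4 e^{\epsilon/2} \le 4 e^{1/8}$ together with $|s|^{-1} \le 2$ valid for $s = -1/2+it$.

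The only subtle point is verifying that $f \equiv 1$ on the full interval $[1/2,1]$: this forces the specific support length $1/2 + 2\ell A$ of $g_0$ and the shift by exactly $1 + 2\ell A$, so that the ``plateau'' region of the convolution coincides with $[-(1/2+2\ell A),-2\ell A]$ before shifting. Once this alignment is fixed, all remaining work is routine bookkeeping on the two standard inequalities for $(1-e^w)/(-w)$.
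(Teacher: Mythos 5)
Your construction is the same as the paper's up to a harmless reparametrization (the paper convolves $\mathbf{1}_{[\frac12-\ell A,\,1+\ell A]}$ with $\ell$ copies of $\frac{1}{2A}\mathbf{1}_{[-A,A]}$, whereas you use one-sided boxes and a final shift by $1+2\ell A$; the resulting $f$ and the Laplace transform \eqref{eqn:WeightLaplace} are identical), and your bounds for (iv)--(vi) use the same two elementary estimates for $(1-e^{w})/(-w)$ applied factor by factor, including the same $\alpha$/$(\ell-\alpha)$ split in (iv). The proposal is correct and essentially reproduces the paper's proof.
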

\begin{remark}
	Our choice is motivated by the works of Weiss \cite[Lemma 3.2]{Weiss} and the authors \cite[Lemma 9.1]{JT_AZ} on the least prime ideal. Namely, the weight function $f$ depends on a parameter $\ell$ which will be chosen to be of size $O(n_K)$. This forces $f$ to be $O(n_K)$-times differentiable and hence $F(x+iy)$ will decay like $|y|^{-O(n_K)}$ for fixed $x > 0$ and $|y| \rightarrow \infty$. This decay rate will be necessary when applying log-free zero density estimates such as \cref{thm:LFZD_HighLying} to bound the contribution of zeros which are high in the critical strip. 
\end{remark}

\begin{proof}
~
\begin{itemize}
	\item For parts (i) and (ii), let $\1_S(\, \cdot \,)$ be an indicator function for the set $S \subseteq \R$. For $j\geq1$, define 
\[
w(t) := \frac{1}{2 A} \mathbf{1}_{[-A,A]}(t), \quad g_0(t) := \mathbf{1}_{[\frac{1}{2} - \ell A,1 + \ell A]}(t), \quad \text{and} \quad
g_j(t) := (w \ast g_{j-1})(t).
\]
Since $\int_{\R} w(t) dt = 1$, one can verify that $f = g_{\ell}$ satisfies (i) and (ii).
	\item For part (iii), observe the Laplace transform $W(z)$ of $w$ is given by
\[
W(z) = \frac{e^{Az} - e^{-Az}}{2A z} = e^{-Az} \cdot \Big( \frac{1-e^{2Az}}{-2Az} \Big), 
\]
and the Laplace transform $G_0(z)$ of $g_0$ is given by
\[
G_0(z) =  \frac{e^{-(1/2-\ell A)z} - e^{-(1+\ell A)z}}{z} = e^{-(1+\ell A)z} \cdot \Big(\frac{1-e^{(\frac{1}{2}+2\ell A) z}}{-z}\Big).
\]
Thus (iii) follows as $F(z) = G_0(z) \cdot  W(z)^{\ell}$. 
	\item For part (iv), we see by (iii) and the definition of $A$ that 
\begin{equation}
|F(-s\log x)| \leq  \frac{e^{\sigma\epsilon} x^{\sigma}}{|s| \log x} \cdot \big( 1 + e^{-\sigma\epsilon} x^{-\sigma/2} \big) \Big| \frac{1-e^{-2A s \log x}}{2A s \log x} \Big|^{\ell}. 
\label{eqn:WeightChoice_iv}
\end{equation}
To bound the above quantity, we observe that
\begin{equation}
\Big|\frac{1-e^{-w}}{w}\Big|^2 \leq \Big(\frac{1-e^{-a}}{a}\Big)^2 \leq 1
\label{eqn:WeightChoice_ElementaryFunction}
\end{equation}
for $w = a+ib$ with $a > 0$ and $b \in \R$. This observation can be checked in a straightforward manner. Using \eqref{eqn:WeightChoice_ElementaryFunction}, it follows that
\begin{equation*}
\begin{aligned}
\Big| \frac{1-e^{-2A s \log x}}{2A s \log x} \Big|^{\ell} 
& = \Big| \frac{1-e^{-2A s \log x}}{2A s \log x} \Big|^{ \alpha } \cdot \Big| \frac{1-e^{-2A s \log x}}{2A s \log x} \Big|^{\ell -\alpha} 
& \leq \Big( \frac{1+ x^{-2A \sigma}}{2A |s| \log x}\Big)^{\alpha} \cdot 1 
& \leq \Big( \frac{2\ell}{\epsilon |s|}\Big)^{\alpha}. 
\end{aligned}
\end{equation*}
In the last step, we noted $1+x^{-2A\sigma} \leq 2$ and used the definition of $A$. Combining this with \eqref{eqn:WeightChoice_iv} and observing $e^{-\sigma \epsilon} \leq 1$, we deduce the desired bound.  

\item For part (v), we see by (iii) that
\begin{align*}
|F(-s\log x)| & \leq   \Big(\frac{1}{2}+2\ell A \Big) e^{\sigma\epsilon} x^{\sigma}  \cdot \Big|\frac{1- e^{-(\tfrac{1}{2}+2\ell A) s \log x}}{(\tfrac{1}{2}+2\ell A)s \log x} \Big|  \cdot \Big| \frac{1-e^{-2A s \log x}}{2A s \log x} \Big|^{\ell}  \\
& \leq e^{\sigma \epsilon} x^{\sigma}, 
\end{align*}
where the second inequality follows from an application of  \eqref{eqn:WeightChoice_ElementaryFunction} and the observation that $\tfrac{1}{2} + 2\ell A < \tfrac{1}{2} + \epsilon < 1$. For $s = \sigma > 0$, observe that $F(-\sigma\log x)$ is real and positive. Thus, by (iii) and \eqref{eqn:WeightChoice_ElementaryFunction},
\begin{align*}
F(-\sigma\log x) & 
\leq   e^{\sigma\epsilon} x^{\sigma}  \cdot \Big(\frac{1- x^{-(\tfrac{1}{2}+2\ell A) \sigma}}{\sigma \log x} \Big)  \cdot \Big( \frac{1-x^{-2A \sigma}}{2A \sigma \log x} \Big)^{\ell}  \\
& \leq   \frac{e^{\sigma\epsilon} x^{\sigma}}{\sigma \log x} \cdot \Big( \frac{1-x^{-2A \sigma }}{2A \sigma \log x} \Big)^{\ell}  \\
& \leq \frac{e^{\sigma \epsilon} x^{\sigma}}{\sigma \log x}.
\end{align*} 
This completes the proof of all cases of (iv).
	\item For part (vi), we shall argue as in (iv). Rearranging (iii), notice that
\[
|F(z)| = \Big|e^{(-\frac{1}{2}+2\ell A)z} \cdot \Big(\frac{1-e^{-(\frac{1}{2}+2\ell A)z}}{z} \Big) \Big(\frac{1-e^{-2Az}}{2Az}\Big)^{\ell}\Big|.
\]
If $r := \Re\{z\} > 0$, then 
\begin{align*}
|F(z)| 
& \leq e^{(-\frac{1}{2}+2\ell A)r} \cdot  \frac{1 + e^{-(\frac{1}{2}+2\ell A)r}}{|z|} \cdot  \Big(\frac{1+e^{-2Ar}}{2A|z|}\Big)^{\ell} \\
& \leq \frac{2 e^{(-\frac{1}{2}+2\ell A)r}}{|z|} \Big(\frac{1}{A|z|}\Big)^{\ell}.
\end{align*}
If we substitute $z = -s\log x = (\tfrac{1}{2}-it) \log x$, then it follows by the definition of $A$ that 
\begin{align*}
|F(-s\log x)| 
& \leq \frac{2 e^{\epsilon/2} x^{-1/4}}{|\tfrac{1}{2}+it| \log x} \Big(\frac{2\ell}{\epsilon |\tfrac{1}{2}+it|} \Big)^{\ell}
 \leq \frac{4 e^{\epsilon/2} x^{-1/4}}{\log x} \Big(\frac{2\ell}{\epsilon } \Big)^{\ell} (1/4+t^2)^{-\ell/2}.
\end{align*}
This yields (vi) since $4e^{\epsilon/2} < 5$ for $\epsilon < 1/4$. 

\end{itemize}

\end{proof}

\section{Preliminary Analysis}
\label{sec:FirstSteps}

\subsection{A weighted sum of prime ideals} 
\label{subsec:WeightPrimes}
For $x > 3, \epsilon \in (0,1/4)$ and integer $\ell \geq 1$, use the compactly-supported weight $f(\, \cdot \,)  = f(\, \cdot \, ; x, \ell, \epsilon)$ defined in \cref{lem:WeightChoice} and set
\begin{equation}
S(x) = S_{\ell,\epsilon}(x):=  \sum_{\kn \subseteq \cO_F} \Lambda_F(\kn) \Theta_C(\kn)	f\Big( \frac{\log \N\kn}{\log x} \Big).
\label{def:S_WeightedPrimes}
\end{equation}
We reduce our estimation of $\pi_C(x,L/F)$ given by \eqref{def:pi_C} to the smoothed version $S(x)$. 
%
\begin{lem} \label{lem:ReduceMainTheorems}
Let $x_0 > e^4$. Suppose there exist constants $a,b \geq 0$ and $0 \leq c \leq 1/2$, all of which are independent of $x$, such that $S(x) < \big\{ a + b x^{-c} \big\} \frac{|C|}{|G|} x$ for all $x \geq x_0$. Then, for all $x \geq x_0$,
\[
\pi_C(x,L/F) < \Big\{ a + 2b x^{-c} + O\Big( \frac{n_L}{x^{1/2}} + \frac{n_L x_0 \log x }{x} \Big) \Big\}  \frac{|C|}{|G|} \mathrm{Li}(x). 
\]
\end{lem}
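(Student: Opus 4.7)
The plan is to pass from the weighted count $S(x)$ to $\psi_C(x)$, and then invoke \cref{lem:Pi_to_Psi} to move from $\psi_C$ to $\pi_C$. First, since $f \equiv 1$ on $[\tfrac{1}{2},1]$ and $f \geq 0$, the definition \eqref{def:S_WeightedPrimes} of $S(x)$ immediately gives $\psi_C(x) - \psi_C(x^{1/2}) \leq S(x)$. A trivial bound shows $\psi_C(y) \leq \psi_F(y) \ll n_F y$ (since at most $n_F$ prime ideals of $F$ sit above each rational prime). Combined with the hypothesis, this yields, for every $t \geq x_0$,
\[
\psi_C(t) \leq (a + b t^{-c}) \tfrac{|C|}{|G|} t + O(n_F t^{1/2}).
\]

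The next step is to substitute this into \cref{lem:Pi_to_Psi}:
\[
\pi_C(x,L/F) \leq \frac{\psi_C(x)}{\log x} + \int_{x_0}^x \frac{\psi_C(t)}{t \log^2 t}\, dt + O(n_F x_0).
\]
The main terms are then handled by two integration-by-parts identities:
\[
\frac{x}{\log x} + \int_{x_0}^x \frac{dt}{\log^2 t} = \mathrm{Li}(x) + O\Big(\frac{x_0}{\log x_0}\Big),
\]
\[
\frac{x^{1-c}}{\log x} + \int_{x_0}^x \frac{t^{-c}}{\log^2 t}\, dt = (1-c)\mathrm{Li}(x^{1-c}) + O\Big(\frac{x_0^{1-c}}{\log x_0}\Big),
\]
the second following from the first by substituting $s = t^{1-c}$. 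The first identity gives the clean $a\,\mathrm{Li}(x)$ contribution. For the second, since $0 \leq c \leq 1/2$, the standard inequality $\mathrm{Li}(y) \leq 2y/\log y$ (valid for $y$ sufficiently large) gives $(1-c)\mathrm{Li}(x^{1-c}) \leq 2x^{1-c}/\log x \leq 2x^{-c}\mathrm{Li}(x)$. This is precisely the origin of the factor $2$ multiplying $b x^{-c}$ in the conclusion.

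The remaining work is bookkeeping of error terms. The $O(n_F x^{1/2})$ contribution to $\psi_C$ produces $O(n_F x^{1/2}/\log x)$ from the first term and $O(n_F x^{1/2}/\log^2 x)$ from the integral; dividing through by $\mathrm{Li}(x) \asymp x/\log x$ yields $O(n_F/x^{1/2})$. Similarly $O(n_F x_0)/\mathrm{Li}(x) = O(n_F x_0 \log x / x)$. Since $F \subseteq L$, we have $n_F \leq n_L$, which gives the stated form of the error.

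There is no deep obstacle here; the only subtlety is the factor of $2$ (rather than $1$) in front of $b x^{-c}$, which, as noted above, arises from the sharp asymptotic $\mathrm{Li}(y) \sim y/\log y$ being bounded above by $2y/\log y$ over the integration range. Everything else is careful collection of lower-order terms.
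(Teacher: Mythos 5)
Your proposal is correct and follows essentially the same route as the paper: bound $\psi_C(t)$ by $S(t)+O(n_Ft^{1/2})$ via the normalization of $f$ on $[\tfrac12,1]$, feed this into \cref{lem:Pi_to_Psi}, and estimate the resulting integrals. The only (cosmetic) difference is in the $b$-term: the paper bounds $\tfrac{x^{1-c}}{\log x}$ and $\int_{x_0}^x t^{-c}(\log t)^{-2}\,dt$ separately by $x^{-c}\mathrm{Li}(x)$ each (using monotonicity of $t^{1-c}/\log^2 t$), whereas you identify their sum with $(1-c)\mathrm{Li}(x^{1-c})$ and invoke $\mathrm{Li}(y)\le 2y/\log y$ — both yield the factor $2$, and your boundary term at $x_0$ is harmless (indeed nonpositive, since $\mathrm{Li}(y_0)>y_0/\log y_0$ for $y_0\ge x_0^{1-c}>e^2$).
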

\begin{proof}
	
	If $t > 1$, then
	\begin{equation}
	\label{eqn:eqn1}
	\psi_C(t) =  \sum_{\substack{ t^{1/2} \leq \N\kn < t}} \Theta_C(\kn) \Lambda_K(\kn) + \psi_C(t^{1/2}).
	\end{equation}
	The sum in \eqref{eqn:eqn1} is bounded by $S(t)$ in \eqref{def:S_WeightedPrimes} because of \cref{lem:WeightChoice}(i), while the secondary term in \eqref{eqn:eqn1} is estimated much like \eqref{eqn:Pi_Bound}. Thus, we have that
	\begin{equation}
	\label{eqn:eqn2}
	\psi_C(t)\leq S(t) + O( n_F t^{1/2} ).
	\end{equation}
	We substitute \eqref{eqn:eqn2} into \cref{lem:Pi_to_Psi} and deduce that
	\[
	\pi_C(x,L/F) \leq \frac{S(x)}{\log x} + \int_{x_0}^x \frac{S(t)}{t \log^2 t} dt  + O\Big( \frac{n_F x^{1/2}}{\log x} + n_F x_0\Big).
	\]
	From our assumption on $S(t)$ for $t \geq x_0$, it follows that
	\begin{equation}
	\label{eqn:ReduceTheorem_LMO_Sx}
	\pi_C(x,L/F) 
	  < a \frac{|C|}{|G|} \mathrm{Li}(x) + b\frac{|C|}{|G|} \Big[ \frac{ x^{1-c}}{\log x} + \int_{x_0}^x \frac{t^{-c} }{\log^2 t} dt  \Big] + O\Big( \frac{n_F x^{1/2}}{\log x} + n_F x_0\Big).  \\
	\end{equation}
	Note that if $0\leq c\leq 1/2$, then $t^{1-c}/\log^2 t$ is an increasing function of $t$ for $t > e^4$.  Since $x_0 > e^4$ and $\mathrm{Li}(x) > \frac{x}{\log x}$ for $x > e^4$, we conclude that
	\begin{equation}
	\label{eqn:t^-c}
	\int_{x_0}^x \frac{t^{-c}}{\log^2 t} dt = \int_{x_0}^x \frac{t^{1-c}}{\log^2 t} \frac{dt}{t} \leq \frac{x^{1-c}}{\log^2 x} \int_{x_0}^x \frac{dt}{t} \leq \frac{x^{1-c}}{\log x} < x^{-c} \,\mathrm{Li}(x).
	\end{equation}
	The desired result follows from \eqref{eqn:ReduceTheorem_LMO_Sx}, \eqref{eqn:t^-c}, and the identity $n_L = [L:F] n_F = |G| n_F$.
	\end{proof}
\subsection{Reduction to Hecke $L$-functions}  
\label{subsec:DeuringTrick}
By Mellin inversion, \eqref{def:S_WeightedPrimes}, and \eqref{eqn:ZC_orthogonality}, it follows that
\begin{equation}
S(x) = \frac{\log x}{2\pi i} \int_{2-i\infty}^{2+i\infty} Z_C(s) F(-s \log x) ds.
\label{eqn:S_MellinInversion}
\end{equation}
To shift the contour, we must rewrite $Z_C(s)$, defined by \eqref{def:ZC}, in terms of $L$-functions which exhibit an analytic continuation to the left of $\Re\{s\} = 1$.
To this end, let $H \subseteq G$ be an abelian subgroup such that $H \cap C$ is non-empty,  
and choose $g_C$ in \cref{subsec:PrimeCounters} so that $g_C \in H \cap C$.  Let $K = L^H$ be the subfield of $L$ fixed by $H$.  By standard arguments (see \cite[Theorem 3.7]{DW} and \cite[Section 3]{LMO}), we have that
\begin{equation}
Z_C(s) = - \frac{|C|}{|G|} \sum_{\chi\in\hat{H}} \bar{\chi}(g_C) \frac{L'}{L}(s,\chi,L/K), 
\label{eqn:ZC_classfieldtheory}
\end{equation}
where the sum runs over certain primitive Hecke characters $\chi$ of $K$ satisfying
\[
\chi(\kP) = \chi\Big( \Big[\frac{L/K}{\kP} \Big]\Big)
\]
for prime ideals $\kP$ of $K$ that are unramified in $L$.  Substituting \eqref{eqn:ZC_classfieldtheory} into \eqref{eqn:S_MellinInversion}, we conclude that
\begin{equation}
	S(x) = \frac{|C|}{|G|} \sum_{\chi} \bar{\chi}(g_C) \frac{\log x}{2\pi i} \int_{2 - i\infty}^{2+i\infty} - \frac{L'}{L}(s,\chi,L/K) F(-s\log x) ds.
	\label{eqn:SumToContour}
\end{equation}
Henceforth, any sum over $\chi$ is over all $\chi \in \hat{H}$. These are equivalently the Hecke characters attached to the abelian extension $L/K$ by class field theory. 

\subsection{Hecke $L$-functions}
\label{subsec:Hecke_L-fcns}
For a more detailed reference on Hecke $L$-functions, see \cite{LO} for example. Suppose $L/K$ is an abelian extension, so all irreducible representations of $\mathrm{Gal}(L/K)$ are 1-dimensional \emph{primitive} Hecke characters $\chi$ satisfying
\[
\chi(\kP) = \chi\Big( \Big[ \frac{L/K}{\kP} \Big] \Big)
\]
for prime ideals $\kP$ of $K$ that are unramified in $L$. The Hecke $L$-function of $\chi$ is defined by
\begin{equation}
L(s,\chi,L/K) = \sum_{\kN \subseteq \cO_K} \chi(\kN) \N\kN^{-s} = \prod_{\kP} \Big(1-\frac{\chi(\kP)}{\N\kP^{s} } \Big)^{-1} 
\label{def:Hecke_L-fcn}
\end{equation}
for $\Re\{s\} > 1$, where the sum is over integral ideals $\kN$ of $K$ and the product is over prime ideals $\kP$ of $K$. For this subsection only, we write $L(s,\chi) = L(s,\chi,L/K)$ and suppress the implicit dependence of quantities on the extension $L/K$. Define the completed Hecke $L$-function $\xi(s, \chi)$ by
\begin{equation}
\xi(s, \chi) = ( s(s-1))^{\delta(\chi)} D_{\chi}^{s/2} \gamma_{\chi}(s) L(s, \chi),
\label{def:CompletedHecke}
\end{equation}
where $D_{\chi} = D_K \N\kf_{\chi}$, the $K$-integral ideal $\kf_{\chi}$ is the conductor of $\chi$, $\delta(\chi)$ is the indicator function for the trivial character, and $\gamma_{\chi}(s)$ is the \emph{gamma factor of $\chi$} defined by
\begin{equation*}
\gamma_{\chi}(s) =  \Big[ \pi^{-\tfrac{s}{2}} \Gamma\Big(\frac{s}{2}\Big)  \Big]^{a(\chi)} \cdot \Big[ \pi^{-\tfrac{s+1}{2} } \Gamma\Big( \frac{s+1}{2} \Big)   \Big]^{b(\chi)}.
\label{eqn:GammaFactor} 
\end{equation*}
Here $a(\chi)$ and $b(\chi)$ are certain non-negative integers satisfying 
\begin{equation}
a(\chi) + b(\chi) = n_K. 
\label{eqn:GammaFactor_Exponents}
\end{equation}
It is well-known that $\xi(s, \chi)$ is entire of order 1 and satisfies the functional equation
\begin{equation*}
\xi(s, \chi) = w(\chi) \xi(1-s, \bar{\chi}),
\label{eqn:FunctionalEquation}
\end{equation*}
where $w(\chi) \in \mathbb{C}$ is the root number of $\chi$ satisfying $|w(\chi)| = 1$. 
The zeros of $\xi(s,\chi)$ are the non-trivial zeros $\rho$ of $L(s,\chi)$ and are known to satisfy $0 < \Re\{\rho\} < 1$.  The trivial zeros $\omega$ of $L(s, \chi)$ are given by 
\begin{equation}
\mathop{\ord}_{s = \omega} L(s, \chi) = 
\begin{cases}
a(\chi) - \delta(\chi) & \text{if } \omega = 0, \\
b(\chi) &  \text{if } \omega = -1,-3,-5,\dots,\\
a(\chi) & \text{if } \omega = -2,-4,-6, \dots,
\end{cases}
\label{eqn:TrivialZeros}
\end{equation}
and arise as poles of the gamma factor of $L(s,\chi)$.

\subsection{Shifting a contour integral} 
\label{subsec:ShiftContour}
Next we shift the contour \eqref{eqn:SumToContour} and bound $S(x)$ in terms of the non-trivial zeros of Hecke $L$-functions. Henceforth write $S = S(x)$ for simplicity. Recall $f$ depends on the arbitrary quantities $x > 3, \epsilon \in (0,1/4)$ and an integer $\ell \geq 1$.  

\begin{lem} 
\label{lem:ShiftedContour_Dedekind} Assume $\ell \geq 2$. Then
\begin{equation}	
	\begin{aligned}
	\frac{|G|}{|C|} \, \frac{S}{e^{\epsilon} x}   \leq 1 +   \frac{\log x}{e^{\epsilon} x} \sum_{\chi} \sum_{\rho_{\chi}} |F( -\rho_{\chi} \log x)|   + O\Big(n_L  x^{-1} \log x + x^{-5/4 } (2\ell /\epsilon)^{\ell} \log D_L \Big),
		\end{aligned}
	\label{eqn:ShiftedContour_Dedekind}
\end{equation}
where the outer sum is over all Hecke characters $\chi$ of the abelian extension $L/K$ and the inner sum runs over all non-trivial zeros $\rho_{\chi}$ of $L(s,\chi,L/K)$, counted with multiplicity. 
\end{lem}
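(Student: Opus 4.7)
The plan is to start from the Mellin-inversion identity \eqref{eqn:SumToContour} and, for each Hecke character $\chi \in \widehat{H}$, shift the contour from $\Re\{s\}=2$ to $\Re\{s\}=-1/2$ via the residue theorem. I would work on the rectangle with vertical sides $\Re\{s\}\in\{-1/2,2\}$ and horizontal sides $|\Im\{s\}|=T$, then let $T\to\infty$. The horizontal pieces vanish because $F(-s\log x)$ decays at least like $|\Im\{s\}|^{-\ell}$ uniformly for $\Re\{s\}\in[-1/2,2]$ (as the underlying $f$ is $\ell$-times differentiable with compact support), whereas $-L'/L(s,\chi,L/K)$ grows only logarithmically in $|\Im\{s\}|$ by the Hadamard product for $\xi(s,\chi)$ together with the functional equation and Stirling.

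Inside the strip, the integrand $-\frac{L'}{L}(s,\chi,L/K)\,F(-s\log x)$ has three sources of poles. Only for $\chi=\chi_0$ does $L(s,\chi_0,L/K)=\zeta_K(s)$ contribute a simple pole at $s=1$ with residue $F(-\log x)$. For each $\chi$, the trivial-zero formula \eqref{eqn:TrivialZeros} produces a pole at $s=0$ of order $a(\chi)-\delta(\chi)$ with residue $-(a(\chi)-\delta(\chi))F(0)$. Each non-trivial zero $\rho_\chi$ of $L(s,\chi,L/K)$ contributes $-F(-\rho_\chi\log x)$, counted with multiplicity. Combining these via \eqref{eqn:SumToContour} and dividing by $\frac{|C|}{|G|}e^\epsilon x$: the $s=1$ residue gives a main contribution bounded by $1$ thanks to $F(-\log x)\leq e^\epsilon x/\log x$ from \cref{lem:WeightChoice}(v); the non-trivial-zero residues, after taking absolute values inside, produce exactly the double sum on the right of \eqref{eqn:ShiftedContour_Dedekind}; and the $s=0$ residue is bounded using $\sum_\chi a(\chi) \leq \sum_\chi(a(\chi)+b(\chi)) = |H|n_K = n_L$ (by \eqref{eqn:GammaFactor_Exponents}) together with $F(0)<3/4$, contributing $O(n_L\log x/x)$, i.e., the first error term.

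It remains to bound the shifted integral along $\Re\{s\}=-1/2$, which is the main technical step. Using the functional equation for $\xi(s,\chi)$ together with Stirling bounds for $\gamma'_\chi/\gamma_\chi$, one obtains $|L'/L(-\tfrac12+it,\chi,L/K)| \ll \log D_\chi + n_K\log(|t|+2)$ uniformly in $t$. Pairing this with the decay estimate $|F(-s\log x)| \ll \frac{x^{-1/4}}{\log x}(2\ell/\epsilon)^\ell(1/4+t^2)^{-\ell/2}$ from \cref{lem:WeightChoice}(vi), the hypothesis $\ell\geq 2$ ensures $\int_\R \log(|t|+2)(1/4+t^2)^{-\ell/2}\,dt = O(1)$, so the contribution from each $\chi$ is $\ll x^{-1/4}(2\ell/\epsilon)^\ell(\log D_\chi + n_K)/\log x$. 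Summing over $\chi$ and invoking the conductor-discriminant formula $\sum_\chi \log D_\chi = \log D_L$, together with $n_L\ll \log D_L$ by Minkowski, produces after multiplying by $\log x$ and dividing by $e^\epsilon x$ the second error term $x^{-5/4}(2\ell/\epsilon)^\ell\log D_L$. The main obstacle is precisely this collapse: one must ensure the conductor-discriminant identity reduces $\sum_\chi\log D_\chi$ to a single $\log D_L$ (rather than the larger $|H|\log D_K + \sum_\chi\log\N\kf_\chi$), which is what yields the advertised $\log D_L$ dependence.
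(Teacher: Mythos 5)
Your proposal is correct and follows essentially the same route as the paper: shift the contour to $\Re\{s\}=-1/2$, collect the pole at $s=1$, the trivial zeros at $s=0$ (bounded via $r(\chi)\le n_K$ and $\sum_\chi r(\chi)\le n_L$), and the non-trivial zeros, then bound the leftover integral by pairing the bound $-L'/L(s,\chi)\ll \log D_\chi+n_K\log(|s|+3)$ on $\Re\{s\}=-1/2$ (the paper cites \cite[Lemma 6.2]{LO} where you invoke the functional equation and Stirling directly) with \cref{lem:WeightChoice}(vi) and the conductor-discriminant formula. The only nitpick is phrasing: at $s=0$ the integrand has a \emph{simple} pole of $-L'/L$ with residue $-(a(\chi)-\delta(\chi))$ rather than a pole of order $a(\chi)-\delta(\chi)$, though the residue you record is the right one, and your closing worry is moot since $\sum_\chi\log D_\chi=[L:K]\log D_K+\sum_\chi\log\N\kf_\chi$ is exactly $\log D_L$ by conductor-discriminant.
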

\begin{proof}
	Shift the contour in \eqref{eqn:SumToContour} to the line $\Re\{s\} = -\tfrac{1}{2}$. This picks up  the non-trivial zeros of $L(s,\chi)$, the simple pole at $s=1$ when $\chi$ is trivial, and the trivial zero at $s=0$ of $L(s,\chi)$ of order  $r(\chi)$. Overall, we see that
	\begin{equation}	
	\begin{aligned}
	\frac{|G|}{|C|}S&  = \log x \Big[ F(-\log x) - \sum_{ \substack{ \chi  }} \bar{\chi}(g_C) \sum_{\rho_{\chi}} F( -\rho_{\chi} \log x)  + O\Big(\sum_{\chi} r(\chi) |F(0)| \Big) \Big]  \\
	& + \log x\sum_{\chi }  \frac{\bar{\chi}(g_C)}{2\pi i} \int_{-1/2-i\infty}^{-1/2+i\infty} -\frac{L'}{L}(s,\chi, L/K) F(-s \log x) ds,
	\end{aligned}
	\label{eqn:I_ShiftContour_Dedekind}
	\end{equation}
		where the sum over $\rho = \rho_{\chi}$ is over all non-trivial zeros of $L(s,\chi,L/K)$, counted with multiplicity. From \eqref{eqn:GammaFactor_Exponents} and \eqref{eqn:TrivialZeros}, we see $r(\chi) \leq n_K$; hence, it follows  by \cref{lem:WeightChoice}(v) that
		\[
		F(-\log x) \leq \frac{e^{\epsilon} x}{\log x}, \quad \text{and} \quad \sum_{\chi} r(\chi)|F(0)| \leq  [L:K] n_K = n_L. 
		\]
		For the remaining contour, by \cite[Lemma 6.2]{LO} and the primitivity of $\chi$, we have that
	\begin{align*}
	-\frac{L'}{L}(s,\chi, L/K) & \ll \log D_{\chi} + n_K \log(|s|+3), 
	\end{align*}
	for $\Re\{s\} = -1/2$ and where $D_{\chi}$ is defined in \eqref{def:CompletedHecke}. It follows by \cref{lem:WeightChoice}(vi) that
	\begin{align*}
	& \frac{\log x}{2\pi i} \int_{-1/2-i\infty}^{-1/2+i\infty}-\frac{L'}{L}(s,\chi, L/K) F(-s \log x) ds  \\
	&\ll x^{-1/4}\Big( \frac{2\ell}{\epsilon} \Big)^{\ell}  \int_{-\infty}^{\infty} \frac{\log D_{\chi} + n_K \log(|t|+3)}{(1/4+t^2)^{\ell/2}} dt  \ll x^{-1/4 } \Big( \frac{2\ell}{\epsilon} \Big)^{\ell}   \log D_{\chi},
	\end{align*}
	because $n_K \ll \log D_K \leq \log D_{\chi}$ and $\ell \geq 2$. Summing over $\chi$ and using the conductor-discriminant formula yields
	\[
	\log x\sum_{\chi }  \frac{\bar{\chi}(g_C)}{2\pi i} \int_{-1/2-i\infty}^{-1/2+i\infty} -\frac{L'}{L}(s,\chi, L/K) F(-s \log x) ds
	 \ll x^{-1/4} \Big( \frac{2\ell}{\epsilon} \Big)^{\ell} \log D_L.
	\]
	 Taking absolute value of both sides in \eqref{eqn:I_ShiftContour_Dedekind}, multiplying both sides by $(e^{\epsilon} x)^{-1}$, and combining all of these observations yields the desired result.
\end{proof}

To analyze the sum over zeros in \cref{lem:ShiftedContour_Dedekind}, we require some information about the distribution of zeros of Hecke $L$-functions.

\section{Distribution of Zeros of Hecke $L$-functions}
\label{sec:DistributionZerosHecke}

In this section, we record various results about $L$-functions $L(s,\chi,L/K)$ where the extension $L/K$ is abelian and hence $\chi$ is a Hecke character of $K$ by class field theory. Associated notation and classical results can be found in \cref{sec:Preliminaries}. Henceforth, any sum $\sum_{\chi}$ or product $\prod_{\chi}$ is over all characters $\chi$ of $L/K$ unless otherwise specified. 

\subsection{Logarithmic Quantity} Let $\delta_0 > 0$ be fixed and sufficiently small. For the remainder of the paper, denote
\begin{equation}	
\sL := \begin{cases}
(\tfrac{1}{3}+\delta_0) \log D_K + (\tfrac{19}{36}+\delta_0) \log \cQ + (\tfrac{5}{12} + \delta_0)   n_K \log n_K & \text{if $n_K^{\frac{5n_K}{6}} \geq D_K^{\frac{4}{3}} \cQ^{\frac{4}{9}},$} \\
(1+\delta_0) \log D_K + (\tfrac{3}{4}+\delta_0) \log \cQ +  \delta_0 n_K \log n_K & \text{otherwise,}	
 \end{cases}
 \label{def:sL}
\end{equation}
where $\cQ = \cQ(L/K) = \max\{ \N \kf_{\chi} : \chi \in \widehat{\mathrm{Gal}} (L/K) \}$. Notice that
\begin{equation}
\sL  \geq (1+\delta_0)\log D_K + (\tfrac{3}{4}+\delta_0) \log \cQ + \delta_0 n_K\log n_K \quad \text{and} \quad \sL \geq (\tfrac{5}{12} + \delta_0) n_K \log n_K
\label{eqn:sL_lowerbound}
\end{equation}
unconditionally. We exhibit a bound on the degree of the extension $L/K$ in terms of $\sL$. 

\begin{lem}
	\label{lem:CharacterGroupBound}
	$[L:K] \ll e^{4\sL/3}$	and $n_L \ll \sL e^{4\sL/3}$.
\end{lem}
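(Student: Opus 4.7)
The bound $n_L \ll \sL \, e^{4\sL/3}$ is a corollary of the first estimate: since $n_L = [L:K]\, n_K$ and Minkowski's classical discriminant bound forces $n_K \ll \log D_K$, the unconditional lower bound $\sL \geq (1+\delta_0)\log D_K$ in \eqref{eqn:sL_lowerbound} gives $n_K \ll \sL$, so multiplying by $[L:K]\ll e^{4\sL/3}$ yields the claim.

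For the principal bound $[L:K] \ll e^{4\sL/3}$, my plan is to use class field theory to translate the problem into one about ray class groups of $K$. Since $L/K$ is abelian with $\mathrm{Gal}(L/K)=H$, the Artin reciprocity map embeds $\widehat H$ into the narrow ray class group $\mathrm{Cl}_K(\kf_{L/K})$ of $K$ modulo the conductor $\kf_{L/K} = \mathrm{lcm}_{\chi \in \widehat H}\kf_\chi$ of the extension. Thus
\[
[L:K] \;=\; |\widehat H| \;\leq\; |\mathrm{Cl}_K(\kf_{L/K})| \;\ll\; h_K \cdot \N\kf_{L/K} \cdot 2^{n_K},
\]
and it suffices to bound the three factors on the right. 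For the class number, I would use the analytic class number formula together with Zimmert's lower bound for the regulator to obtain $h_K \ll D_K^{1/2}(\log D_K)^{n_K-1}$; Minkowski's inequality $n_K \ll \log D_K$ then reduces this to $\log h_K \leq \tfrac{1}{2}\log D_K + O(n_K \log\log D_K)$. The archimedean factor $2^{n_K}$ is bounded by $e^{n_K} \ll D_K^{O(1)}$ via the same Minkowski estimate.

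The delicate ingredient is $\N\kf_{L/K}$: writing $H \cong C_1 \times \cdots \times C_r$ in invariant factors with generating characters $\chi_i$, one has $\N\kf_{L/K} \leq \prod_i \N\kf_{\chi_i} \leq \cQ^r$. This is self-referential under only the crude rank inequality $r \leq \log_2|H|$, so I would instead view $H$ as a subquotient of $\mathrm{Cl}_K(\kf_{L/K})$ and bound its $p$-ranks in terms of $n_K$ plus the number of primes of $K$ ramifying in $L$, yielding a refined estimate of the shape $\N\kf_{L/K} \leq \cQ \cdot (\text{factor controlled by } n_K)$. Combining the three estimates and comparing with the two regimes of \eqref{def:sL} then gives the result: in the second case of $\sL$, the $D_K^{4/3}\cdot\cQ$ contribution swallows $h_K \cdot \N\kf_{L/K}\cdot 2^{n_K}$ for cyclic or low-rank $H$; in the first case, the $(\tfrac{5}{12}+\delta_0)n_K \log n_K$ term absorbs the rank-dependent $n_K^{O(n_K)}$ factor needed to handle non-cyclic $H$.

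The main obstacle I anticipate is obtaining a clean bound on $\N\kf_{L/K}$ without a circular estimate; I expect either a $p$-rank argument for ray class groups or an alternative route via the relative discriminant identity $\mathrm{disc}(L/K) = \prod_{\chi}\kf_\chi$ combined with Minkowski-type lower bounds on $D_L$ will be needed. The seemingly unusual coefficients $\tfrac{1}{3}$, $\tfrac{19}{36}$, and $\tfrac{5}{12}$ in the definition of $\sL$ appear to be precisely tuned to balance the class-number exponent $\tfrac12$, the ray-class-group exponent on $\cQ$, and the rank-correction factor so that the two cases of \eqref{def:sL} together cover all abelian $H$.
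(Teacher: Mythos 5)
Your bound for $n_L$ given the bound for $[L:K]$ is fine and matches the paper ($n_K \ll \sL$ via \eqref{eqn:sL_lowerbound}). The problem is the main estimate $[L:K]\ll e^{4\sL/3}$, where your plan has a genuine gap at exactly the point you flag as delicate. Passing to $|\mathrm{Cl}_K(\kf_{L/K})| \ll h_K\,\N\kf_{L/K}\,2^{n_K}$ introduces $\N\kf_{L/K}=\N\bigl(\mathrm{lcm}_{\chi}\kf_\chi\bigr)$, and the only a priori relation between this and $\cQ=\max_\chi \N\kf_\chi$ is $\N\kf_{L/K}\leq\prod_\chi\N\kf_\chi\leq\cQ^{[L:K]}$, which is circular. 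Your proposed repair — that a $p$-rank argument gives $\N\kf_{L/K}\leq\cQ\cdot(\text{factor controlled by }n_K)$ — is not proved and is doubtful as stated: the number of primes of $K$ ramifying in $L$ is not controlled by $n_K$, no single character need realize the lcm of the conductors at every prime simultaneously, and in any case replacing $H$ by the full ray class group modulo $\kf_{L/K}$ (of which $H$ is only a quotient) is already lossier than what is needed. So the argument as written does not close.

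The paper avoids this entirely: it identifies $\mathrm{Gal}(L/K)$ with a congruence class group $I(\kf)/\ker\phi$ via Artin reciprocity and then invokes \cite[Lemma 2.11]{JT_AZ}, which bounds the order of such a group \emph{directly} by $e^{O_{\epsilon_0}(n_K)}D_K^{1/2+\epsilon_0}\cQ^{1+\epsilon_0}$ with $\cQ$ the \emph{maximum} conductor norm of its characters — the quantity $\N\kf_{L/K}$ never appears. That cited lemma is the entire content of the present statement; what remains is only the numerical check that $e^{O_{\epsilon_0}(n_K)}D_K^{1/2+\epsilon_0}\cQ^{1+\epsilon_0}\ll e^{4\sL/3}$ in both branches of \eqref{def:sL}, which is where the factor $\tfrac43$ and the case split on $n_K^{5n_K/6}$ versus $D_K^{4/3}\cQ^{4/9}$ earn their keep. (Your closing speculation that the coefficients $\tfrac13$, $\tfrac{19}{36}$, $\tfrac{5}{12}$ are tuned to a class-number computation inside this lemma is not right either: they originate in the convexity and zero-density estimates of \cite{JT_AZ}; only the exponent $\tfrac43$ is chosen to make this particular comparison work.) To salvage your approach you would need to either reprove something equivalent to \cite[Lemma 2.11]{JT_AZ} — e.g.\ via the conductor--discriminant formula $D_L=D_K^{[L:K]}\prod_\chi\N\kf_\chi\leq(D_K\cQ)^{[L:K]}$ played against a lower bound for $D_L$ — or simply cite it.
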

\begin{proof}
	Let $\kf = \kf_{L/K}$ be the Artin conductor attached to $L/K$ by class field theory. Let $I(\kf)$ be the group of fractional ideals of $K$ relatively prime to $\kf$.  By class field theory, there exists a homomorphism $\phi:I(\kf)\to \mathrm{Gal}(L/K)$.  Thus $I(\kf)/\ker\phi$ is isomorphic to $\mathrm{Gal}(L/K)$. This induces an isomorphism between their respective character groups and therefore,
	\[
	\cQ(L/K) = \max\{ \N \kf_{\chi} : \chi \in \widehat{\mathrm{Gal}} (L/K) \} = \max\{ \N \kf_{\chi} : \chi \in \widehat{I(\kf)/\ker\phi} \}.
	\]
	By our previous observations, $|I(\kf)/\ker\phi| = |\mathrm{Gal}(L/K)| = [L:K]$. For $\epsilon_0 > 0$ fixed and sufficiently small, we have  by \cite[Lemma 2.11]{JT_AZ} that $|I(\kf)/\ker\phi| \ll e^{O_{\epsilon_0}(n_K)} D_K^{1/2+\epsilon_0} \cQ^{1+\epsilon_0} \ll e^{4\sL/3}$ as desired. To bound $n_L$, observe that $n_L = [L:K] n_K$ and $n_K \ll \sL$. 
\end{proof}

\subsection{Low-Lying Zeros}
\label{subsec:LPI-notation}

Next, we specify some important zeros of $\prod_{\chi} L(s,\chi,L/K)$ which will be used in \cref{sec:Proof_BT_easy,sec:Proof_BT_SiegelZeroExists,sec:Proof_BT_NoSiegelZero}. For the remainder of the paper, let $\eta > 0$ be sufficiently small and arbitrary. Consider the multiset of zeros given by
\begin{equation}
\cZ := \Big\{ \rho \in \mathbb{C} : \prod_{\chi} L(\rho,\chi,L/K) = 0, 0 < \Re\{ \rho\} < 1, |\Im(\rho)| \leq \eta^{-2} \Big\}. 
\label{eqn:SelectZeros}
\end{equation}	
We select three important zeros of $\mathcal{Z}$ as follows:  
\begin{itemize}
	\item Choose $\rho_1 \in \cZ$ such that $\Re\{\rho_1\}$ is maximal. Let $\chi_1$ be its associated Hecke character so $L(\rho_1,\chi_1,L/K) = 0$. Denote
\[
\rho_1 = \beta_1 + i\gamma_1 = \Big(1-\frac{\lambda_1}{\sL}\Big) + i \frac{\mu_1}{\sL},
\]
where $\beta_1 = \Re\{\rho_1\}, \gamma_1 = \Im\{\rho_1\}, \lambda_1 > 0,$ and $\mu_1 \in \R$. 
	\item Choose\footnote{If $\rho_1$ is real then $\rho' \in \cZ \setminus \{\rho_1\}$ instead, with the other conditions remaining the same.} $\rho' \in \cZ \setminus\{ \rho_1, \bar{\rho_1}\}$ satisfying  $L(\rho',\chi_1,L/K) = 0$ such that $\Re\{\rho'\}$ is maximal with respect to these conditions. Similarly denote
	\[
	 \rho' = \beta' + i\gamma'=\Big(1-\frac{\lambda'}{\sL}\Big) + i \frac{\mu'}{\sL}.
	\]
	\item Choose $\rho_2 \in \cZ \setminus \cZ_1$ such that $\Re\{\rho_2\}$ is maximal and where $\cZ_1$ is the multiset of zeros of $L(s,\chi_1,L/K)$ contained in $\cZ$. Let $\chi_2$ be its associated Hecke character so $L(\rho_2,\chi_2,L/K) = 0$. Similarly, denote
\[
\rho_2 = \beta_2 + i\gamma_2=\Big(1-\frac{\lambda_2}{\sL}\Big) + i \frac{\mu_2}{\sL}.
\]
\end{itemize}

If $\lambda_1 < \eta$ then we henceforth refer to $\rho_1$ as an \emph{$\eta$-Siegel zero}. The proof of \cref{thm:BT_Weiss_sharp} will be divided according to whether an $\eta$-Siegel zero exists or not.

\subsection{Zero-Free Regions} 
Here we record the current best-known explicit result regarding zero-free regions  of Hecke $L$-functions; see also \cite{Ahn-Kwon, Kadiri} for earlier results. 

\begin{thm}[Zaman] \label{thm:ZeroFreeRegions}
For $\sL$ sufficiently large depending on $\eta$, $\min\{\lambda',\lambda_2\} > 0.2866$.  Furthermore, if $\lambda_1 < 0.0875$ then $\rho_1$ is a simple real zero of $\prod_{\chi} L(s,\chi, L/K)$ and is associated with a real character $\chi_1$. 
\end{thm}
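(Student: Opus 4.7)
The plan is to follow the classical de la Vallée Poussin method, applied to the Hecke setting in the spirit of Kadiri and others, with a carefully optimized non-negative trigonometric polynomial to obtain the explicit constants $0.2866$ and $0.0875$.

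First, I would fix a non-negative trigonometric polynomial $P(\theta) = \sum_{k=0}^n c_k \cos(k\theta)$ with $c_0, \ldots, c_n \geq 0$. The classical choice $3 + 4\cos\theta + \cos(2\theta) = 2(1+\cos\theta)^2$ produces the constant $1/4$, so to improve to $0.2866$ I would use a higher-degree optimized polynomial (as in Kadiri's or Zaman's earlier work). The starting identity is that, for any Hecke character $\chi$ of $L/K$ and any $\sigma > 1$,
\begin{equation*}
\sum_{k=0}^n c_k \Re\Big\{-\frac{L'}{L}(\sigma + i k \gamma, \chi^k)\Big\} \geq 0,
\end{equation*}
since the Dirichlet series $-\frac{L'}{L}$ has non-negative coefficients supported on prime powers and $P(\theta)\geq 0$.

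Second, I would apply the Hadamard factorization of each $\xi(s,\chi)$ to express $-\frac{L'}{L}(s,\chi)$ as a sum over non-trivial zeros plus contributions from the gamma factor, a pole term when $\chi$ is trivial, and a term bounded by $O(\log D_\chi + n_K \log(|s|+2))$ away from poles. Picking $\sigma = 1 + \kappa/\sL$ with $\kappa$ a small parameter to optimize, and choosing the shifts at the $\gamma_j$ appearing in the definitions of $\rho_1,\rho',\rho_2$, each of these zeros contributes a term essentially of the form $-c_j \Re\{1/(\sigma - \rho)\}$, which is large and negative when the zero lies close to $s=1$. The remaining zeros contribute with a controlled sign, using the fact that $\Re\{1/(\sigma - \rho)\} \geq 0$ for $\sigma > 1$.

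Third, combining these ingredients with the inequality above, and summing over all characters $\chi$ of $\mathrm{Gal}(L/K)$ (which enters via the conductor-discriminant formula $\sum_\chi \log D_\chi = \log D_L$ and via $\sL$ capturing $\log D_K$, $\log \cQ$ and $n_K\log n_K$), I would obtain an inequality of the form
\begin{equation*}
c_0 \kappa^{-1} \geq \sum_j c_j \,\Re\!\left\{\frac{1}{\kappa + \lambda_j + i(\mu_j - k_j \mu_1)\frac{}{}}\right\} - O(1),
\end{equation*}
after normalizing and using the definitions $\rho = 1 - \lambda/\sL + i\mu/\sL$. Optimizing the free parameter $\kappa$ and the coefficients $c_k$ then forces a lower bound on $\min(\lambda',\lambda_2)$. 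A standard Jutila-type polynomial, tuned to the situation where two near-$s=1$ zeros are simultaneously present, yields the bound $0.2866$.

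For the second assertion, assume $\lambda_1 < 0.0875$. If $\chi_1$ were a non-real character, then $\bar\rho_1$ would be a zero of $L(s,\bar\chi_1,L/K)$ with $\bar\chi_1 \neq \chi_1$; this is a second zero in $\cZ$ outside $\cZ_1$ with the same $\Re$-coordinate, forcing $\lambda_2 \leq \lambda_1 < 0.0875 < 0.2866$, contradicting the first part. Similarly, a multiple zero at $\rho_1$ produces a second zero in $\cZ_1$, forcing $\lambda' \leq \lambda_1 < 0.2866$, again a contradiction. Hence $\rho_1$ is simple, real, and attached to a real character.

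The main obstacle is the explicit optimization leading to the numerical constants $0.2866$ and $0.0875$: one must balance the coefficients $c_k$, the shift parameter $\kappa$, and the error terms from the gamma factor and the conductor sum precisely enough to surpass the bounds produced by the simpler polynomial $3 + 4\cos\theta + \cos 2\theta$. Since the result is explicitly attributed to Zaman, in practice the proof here would cite that work rather than redo the numerical optimization, but the structural argument above is the route I would take.
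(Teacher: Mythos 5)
The paper does not reprove this theorem: its ``proof'' consists of citing Zaman's explicit zero-free region and exceptional-zero results (Theorems 1.1 and 1.3 of the reference \emph{Zaman\_2015a}) for narrow ray class fields with $\eta=1$, together with the remark that general abelian $L/K$ and general $\eta\in(0,1)$ follow by the modifications outlined in Section 8 of the authors' earlier paper. So your closing sentence --- that in practice one would cite Zaman --- matches exactly what the paper does, and your sketch of the de la Vall\'ee Poussin/Kadiri machinery (non-negative trigonometric polynomial, Hadamard factorization, optimization of the shift $\kappa$ and the coefficients $c_k$) is a fair description of what lies behind that citation. Two caveats on the sketch: the inequality $\sum_k c_k\Re\{-\tfrac{L'}{L}(\sigma+ik\gamma,\chi^k)\}\ge 0$ requires the usual care when $\chi^k$ is imprimitive or principal (the pole of $\zeta_K$ enters), and in this setting the error terms must be arranged so that only $\sL$, not $\log D_L$, appears; these are precisely the points that produce the constants $0.2866$ and $0.0875$ and cannot be treated as bookkeeping.

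The one genuine gap is in your deduction of the second assertion from the first. You rule out a non-real $\chi_1$ (via $\bar{\rho_1}$ landing in $\cZ\setminus\cZ_1$) and a multiple zero (via a second copy in $\cZ_1$), but you do not rule out the configuration where $\chi_1$ is \emph{real} and $\rho_1=\beta_1+i\gamma_1$ with $\gamma_1\neq 0$. In that case $\bar{\rho_1}$ is again a zero of $L(s,\chi_1)$, so it lies in $\cZ_1$ and is excluded both from the choice of $\rho'$ (which omits $\{\rho_1,\bar{\rho_1}\}$ by definition) and from the choice of $\rho_2$ (which avoids $\cZ_1$). Hence $\lambda_1<0.0875$ together with $\min\{\lambda',\lambda_2\}>0.2866$ is formally consistent with a complex conjugate pair of exceptional zeros attached to a real character, and the claim that $\rho_1$ is \emph{real} does not follow from part one. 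Excluding this requires a separate analytic step --- the classical argument that two conjugate zeros of the same real character repel each other in the explicit formula evaluated at real $\sigma$, so both cannot lie within $c/\sL$ of $s=1$ --- which is part of the content of Zaman's theorem, not a formal consequence of the definitions of $\rho'$ and $\rho_2$.
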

\begin{proof} 
	When $L$ is a narrow ray class field of $K$ to a given modulus and $\eta = 1$ in \eqref{eqn:SelectZeros}, this is implied by \cite[Theorems 1.1 and 1.3]{Zaman_2015a} since $\sL$ satisfies \eqref{eqn:sL_lowerbound}. For general abelian extensions $L/K$ and any fixed $\eta \in (0,1)$, one may easily modify \cite{Zaman_2015a} to obtain the cited result by following the outline in \cite[Section 8]{JT_AZ}; see \cite{Zaman_thesis} for details. 
\end{proof}

\subsection{Zero Repulsion}

Here we record two explicit estimates for zero repulsion when an exceptional zero exists, also known as ``Deuring-Heilbronn phenomenon". 

\begin{thm}[Zaman]
\label{thm:ZeroRepulsion} Let $\sL$ be sufficiently large depending on $\eta$. 
If $\lambda_1 < 0.0875$, then $\min\{\lambda', \lambda_2\} > 0.44$.  If $\eta \leq \lambda_1 < 0.0875$, then $\min\{\lambda', \lambda_2\} > 0.2103 \log(1/\lambda_1)$.
\end{thm}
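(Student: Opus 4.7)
The plan is to prove zero repulsion via a weighted power-sum argument applied to a non-negative Dirichlet series built from Hecke $L$-functions, paralleling the approach underlying \cref{thm:ZeroFreeRegions}. The hypothesis $\lambda_1 < 0.0875$ together with \cref{thm:ZeroFreeRegions} forces $\chi_1$ to be real and $\rho_1$ to be a simple real zero, which is the essential positivity input.

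For the bound on $\lambda_2$, form the auxiliary product
\[
M(s) \,=\, \zeta_K(s)\,L(s,\chi_1,L/K)\,L(s,\chi_2,L/K)\,L(s,\chi_1\chi_2,L/K),
\]
whose logarithmic derivative $-M'/M(s)=\sum_{\kn}\Lambda_K(\kn)c_M(\kn)\N\kn^{-s}$ satisfies $c_M(\kn)\geq 0$ at prime powers by Rankin--Selberg-type positivity (using crucially that $\chi_1$ is real). For the bound on $\lambda'$, replace the last two factors by $L(s,\chi_1,L/K)$, exploiting that $\rho'$ is a distinct zero of $L(s,\chi_1)$ from $\rho_1$ and $\bar{\rho}_1$. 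Apply Mellin inversion against the compactly supported weight $f$ from \cref{lem:WeightChoice} (choosing $\ell\asymp n_K$ and $\epsilon$ small but fixed), so that
\[
T(x) \,=\, \sum_{\kn\subseteq\cO_K}\Lambda_K(\kn)\,c_M(\kn)\,f\!\left(\frac{\log\N\kn}{\log x}\right) \,\geq\, 0.
\]
Shifting the contour to $\Re\{s\}=-1/2$ exactly as in the proof of \cref{lem:ShiftedContour_Dedekind} decomposes $T(x)$ into: a positive main term of order $x/\log x$ from the pole of $\zeta_K$ at $s=1$; negative contributions from $\rho_1$ and from the target zero $\rho_j\in\{\rho',\rho_2\}$; bounded contributions from other members of $\cZ$; and a negligible total error from the $\Re\{s\}=-1/2$ contour and the trivial zeros (controlled via \eqref{eqn:sL_lowerbound} and \cref{lem:CharacterGroupBound}), together with the log-free zero density estimate applied to zeros with $|\Im\{\rho\}|>\eta^{-2}$.

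Set $x=\exp(\kappa\sL)$ for a free parameter $\kappa>0$ and write $\beta_k=1-\lambda_k/\sL$, so the main term is of order $x/\log x$ and the signal contributions are of order $x\cdot e^{-\kappa\lambda_1}/\log x$ and $x\cdot e^{-\kappa\lambda_j}/\log x$ respectively. Non-negativity of $T(x)$ then reduces (after dividing through by $x/\log x$ and absorbing $o(1)$ error) to a clean bound of the form
\[
1 \,\geq\, c_1\,e^{-\kappa\lambda_1} \,+\, c_j\,e^{-\kappa\lambda_j},
\]
with explicit positive constants $c_1, c_j$ determined by the factors of $M(s)$ and the shape of $F$. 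In the broad regime $\lambda_1<0.0875$, an $O(1)$ choice of $\kappa$ keeps $e^{-\kappa\lambda_1}$ bounded away from $1$ and forces $\lambda_j>0.44$; in the tighter regime $\eta\le\lambda_1<0.0875$, taking $\kappa\asymp\log(1/\lambda_1)$ drives $e^{-\kappa\lambda_1}\to 0$ and forces $\lambda_j>0.2103\,\log(1/\lambda_1)$.

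The hard part is the precise numerical optimization producing the explicit constants $0.44$ and $0.2103$: one must carefully balance the main term, the $\Re\{s\}=-1/2$ contour error, the trivial-zero contribution, and the high-lying zero contribution against the signal terms, with sharp dependence on $\delta_0$ in \eqref{def:sL} and on the choices of $\kappa$, $\ell$, $\epsilon$, and the exponents in $M(s)$. For narrow ray class fields with $\eta=1$ this optimization is carried out in Zaman's earlier work; extending it to an arbitrary abelian $L/K$ and any fixed $\eta\in(0,1)$ is a direct adaptation, replacing the classical Dirichlet-character zero density estimate by the log-free version for Hecke $L$-functions established by the authors, and using \cref{lem:CharacterGroupBound} to absorb $[L:K]$ into $\sL$.
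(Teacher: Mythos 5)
The paper does not actually prove this theorem: its ``proof'' is a citation of Zaman's explicit estimates for zeros of Hecke $L$-functions (the narrow ray class field case with $\eta=1$), plus the remark that general abelian $L/K$ and general $\eta\in(0,1)$ follow by the modifications outlined in the authors' earlier work and in Zaman's thesis. Your proposal terminates in the same place --- you explicitly defer the numerical optimization yielding $0.44$ and $0.2103$ to that prior work --- so at the level of what this paper itself establishes, you and the paper are on comparable footing, and your added value is the sketch of the underlying mechanism.

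That sketch, however, contains a concrete positivity error at its foundation. For $M(s)=\zeta_K(s)L(s,\chi_1,L/K)L(s,\chi_2,L/K)L(s,\chi_1\chi_2,L/K)$, the coefficient of $-M'/M$ at a prime power $\kn$ unramified in $L$ is $(1+\chi_1(\kn))(1+\chi_2(\kn))$, which is a \emph{complex} number whenever $\chi_2$ is complex; the claimed inequality $c_M(\kn)\geq 0$ fails, and with it the non-negativity of $T(x)$ on which the entire argument rests. The standard repair is to work with $(1+\chi_1(\kn))\bigl(1+\Re\{\chi_2(\kn)\N\kn^{-i\gamma_2}\}\bigr)\geq 0$, which forces you to include the conjugate factors $L(s\pm i\gamma_2,\bar{\chi}_2)$ and $L(s\pm i\gamma_2,\chi_1\bar{\chi}_2)$ shifted to the height of the target zero, changing the bookkeeping of which zeros enter and with what weight. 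Similarly, for the $\lambda'$ bound, replacing the last factors by copies of $L(s,\chi_1,L/K)$ produces coefficients $1+k\chi_1(\kn)$ with $k\geq 2$, which are negative on ideals with $\chi_1(\kn)=-1$; the correct object is simply $\zeta_K(s)L(s,\chi_1,L/K)$, whose coefficients $1+\chi_1(\kn)$ are non-negative and which vanishes at both $\rho_1$ and $\rho'$. These defects are repairable and the repaired argument is indeed a standard route to Deuring--Heilbronn repulsion, but as written the central non-negativity claims are false, and the passage from the corrected setup to the specific constants $0.44$ and $0.2103$ --- which is the entire quantitative content of the theorem --- is carried out neither here nor in the paper, only in the cited source.
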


\begin{proof} Again, when $L$ is a narrow ray class field of $K$ to a given modulus and $\eta = 1$,  this is implied by \cite[Theorem 1.4]{Zaman_2015a} since $\sL$ satisfies \eqref{eqn:sL_lowerbound}. Similar to the proof of \cref{thm:ZeroFreeRegions}, one may modify \cite{Zaman_2015a} as outlined in \cite[Section 7]{JT_AZ} to deduce the same theorem for general abelian extensions $L/K$ and $\eta \in (0,1)$; see \cite{Zaman_thesis} for details.
\end{proof}

\cref{thm:ZeroRepulsion} is unable to handle exceptional zeros $\rho_1$ extremely close to 1 due to the requirement $\lambda_1 \geq \eta$. Thus, we include a version of Deuring-Heilbronn phenomenon \cite[Theorem 8.3]{JT_AZ} which repels zeros in the entire critical strip. 

\begin{thm}[Thorner--Zaman]
\label{thm:ZeroRepulsion_Full}
Let $T \geq 1$ be arbitrary. Suppose $\chi_1$ is a real character and $\rho_1$ is a real zero. For any character $\chi$ of $L/K$, let $\rho  = \beta + i\gamma \neq \rho_1$ be a non-trivial zero of $L(s,\chi,L/K)$ satisfying $1/2 \leq \beta < 1$ and $ |\gamma| \leq T$.  For $\sL$ sufficiently large, there exists an absolute effectively computable constant $c_1 > 0$ such that
\[
\beta < 1- \frac{\log\Big(\dfrac{c_1}{(1-\beta_1)(\sL + n_K \log T)}\Big)}{81\sL + 25 n_K \log T}.
\]
\end{thm}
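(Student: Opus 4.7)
The plan is to carry out a classical Deuring--Heilbronn argument, but with a weight designed to handle zeros throughout the entire critical strip rather than only in a bounded box. Because $\chi_1$ is a real character and $\rho_1$ is a real zero, the starting point is the non-negativity of
\begin{equation*}
\Lambda_K(\kn) \bigl(1+\chi_1(\kn)\bigr) \bigl(1+\mathrm{Re}(\chi(\kn)\N\kn^{-i\gamma})\bigr).
\end{equation*}
I would fix a smooth, non-negative, compactly supported test function $\phi$ on $(0,\infty)$ whose Mellin transform $\Phi(s)$ decays rapidly in $|\Im s|$ (for example a convolution of several boxcar functions, as in \cref{lem:WeightChoice}). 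With a parameter $x>1$ to be chosen, the quantity
\begin{equation*}
\mathcal{S} := \sum_{\kn} \Lambda_K(\kn) \bigl(1+\chi_1(\kn)\bigr) \bigl(1+\mathrm{Re}(\chi(\kn)\N\kn^{-i\gamma})\bigr) \phi\!\left(\frac{\log\N\kn}{\log x}\right)
\end{equation*}
is non-negative. Expanding the product gives four Dirichlet series attached to the trivial character, $\chi_1$, $\chi$ (twisted by $\N\kn^{-i\gamma}$), and $\chi_1\chi$ (similarly twisted), and Mellin inversion against each associated Hecke $L$-function gives an explicit formula.

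The explicit-formula step produces, up to lower-order trivial-zero and gamma-factor terms, a main term of size $\Phi(1) x \log x$ coming from the pole of $\zeta_K$, a negative term $-2x^{\beta_1} \Phi(\rho_1) \log x$ from the real zero $\rho_1$ (appearing with a favorable sign because of the squaring of $\chi_1$), a negative term roughly $-2x^\beta \mathrm{Re}(\Phi(\rho)) \log x$ coming from $\rho$ (and its conjugates under the twists), and a sum over the remaining non-trivial zeros $\rho^*$ of the four Hecke $L$-functions involved. Rearranging $\mathcal{S}\ge 0$ gives an inequality of the shape
\begin{equation*}
x^\beta |\Phi(\rho)| \;\ll\; \Phi(1) x \;-\; x^{\beta_1} \Phi(\rho_1) \;+\; \Bigl|\sum_{\rho^*} \Phi(\rho^*) x^{\rho^*}\Bigr|.
\end{equation*}
The first two terms combine to give $\Phi(1) x(1-x^{-(1-\beta_1)}) \asymp (1-\beta_1) x \log x$ when $\log x \asymp 1/(1-\beta_1)$, which is the source of the $(1-\beta_1)$ inside the logarithm in the stated bound.

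The main step is to bound the sum over remaining zeros using the log-free zero density estimates for Hecke $L$-functions from \cite{JT_AZ}. Splitting the zeros by their real parts $\sigma^*$ and heights, one obtains (after using the decay of $\Phi$ in the $t$-aspect to absorb high-lying zeros, including those with $|\Im\rho^*|>T$) a bound of the form $x \exp\bigl(-c(1-\sigma^*)(\sL+n_K\log T)\bigr)$ summed geometrically. Inserting this bound and taking logarithms forces
\begin{equation*}
\beta \log x \;\le\; \beta_1 \log x - \log\!\left(\frac{c_1}{(1-\beta_1)(\sL+n_K\log T)}\right) + O\!\left((1-\sigma^*)(\sL+n_K\log T)\right).
\end{equation*}
Optimizing over $\log x$ produces the claimed inequality, with the explicit coefficients $81$ and $25$ emerging from combining (i) the constant in the zero-density estimate, (ii) the coefficient in front of $\log D_\chi + n_K\log(|t|+3)$ in the standard bound for $L'/L$ on $\Re s = -1/2$, and (iii) the precise separation of $\sL$ (governing the character-sum conductor data) from $n_K \log T$ (governing the $\Im s$-aspect).

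The principal obstacle is the optimization producing the explicit constants $81$ and $25$: it requires carefully balancing the support of $\phi$ against $T$, tracking separately how $\sL$ and $n_K\log T$ enter through the log-free density estimate versus the $L'/L$ bound, and verifying that the density estimates of \cite{JT_AZ} hold with the required uniformity in $T$ for products of Hecke $L$-functions attached to the abelian extension $L/K$. Once this bookkeeping is organized, the positivity inequality above delivers the theorem.
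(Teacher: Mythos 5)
The paper does not actually prove this statement: it is imported verbatim from the authors' earlier work \cite[Theorem 8.3]{JT_AZ}, where it is established by the Tur\'{a}n power sum method in the tradition of Lagarias--Montgomery--Odlyzko and Weiss. There one forms a Dirichlet series with non-negative coefficients built from the factor $(1-\N\kp^{\beta_1-1})$, differentiates many times at a point $s_0$ just to the right of $1+i\gamma$, and applies a power sum inequality (Kolesnik--Straus, as in \cite[Lemma 5.2]{LMO}) to the resulting sum $\sum_{\rho^*}(s_0-\rho^*)^{-k}$; the denominator $81\sL+25n_K\log T$ is dictated by the admissible range of the power-sum exponent $k$, which is controlled by zero-counting near $s_0$. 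Your proposal instead runs the positivity/explicit-formula version of Deuring--Heilbronn: the non-negativity of $(1+\chi_1(\kn))(1+\Re(\chi(\kn)\N\kn^{-i\gamma}))$, Mellin inversion against the four resulting Hecke $L$-functions, and the log-free zero density estimate to control the leftover zeros. That is a genuinely different route, and one that is in principle capable of proving a bound of this shape.

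As written, however, the sketch has a gap at its central step. After extracting the pole, the term from $\rho_1$, and the term from $\rho$, you must show that $\bigl|\sum_{\rho^*}\Phi(\rho^*)x^{\rho^*}\bigr|$ over the \emph{remaining} zeros is dominated by the single extracted quantity $x^{\beta}\Phi(\beta)$. \cref{thm:LFZD_HighLying} gives $N(\sigma,T)\ll(e^{162\sL}T^{81n_K+162})^{1-\sigma}$, so partial summation bounds the remaining sum by roughly $x^{\sigma_1}\exp\bigl((162\sL+(81n_K+162)\log T)(1-\sigma_1)\bigr)$, where $\sigma_1$ is the largest real part among the remaining zeros of $\zeta_K$, $L(s,\chi_1)$, $L(s,\chi)$ and $L(s,\chi_1\chi)$. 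If any of these has real part $\geq\beta$ --- and nothing in the hypotheses prevents that, since $\rho$ is an \emph{arbitrary} zero $\neq\rho_1$ --- this bound exceeds $x^{\beta}$ and the inequality yields nothing. One must therefore first reduce to the case where $\rho$ has maximal real part among all zeros $\neq\rho_1$ of all four $L$-functions in play (absorbing high-lying competitors via the decay of $\Phi$, and treating the degenerate case $\chi=\chi_1$, where $L(s+i\gamma,\chi_1\chi)=\zeta_K(s+i\gamma)$ contributes an extra near-pole, separately); this reduction is omitted. Relatedly, the constants $81$ and $25$ do not come from the $L'/L$ bound on $\Re\{s\}=-\tfrac{1}{2}$, which only contributes a negligible $x^{-1/4}$-type error; in your approach they would have to emerge from choosing $\log x$ as a precise multiple of $162\sL+(81n_K+162)\log T$ so that the density-estimate bound loses to the extracted term, and that optimization is the substance of the proof rather than bookkeeping to be deferred.
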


\subsection{Log-Free Zero Density Estimates} 
\label{subsec:DistributionZerosHecke_LFZD}
Let $\chi \in \widehat{\mathrm{Gal}}(L/K)$ be a Hecke character. Define
\[
N(\sigma,T,\chi) := \#\{ \rho = \beta+i\gamma : L(\rho,\chi,L/K) = 0, \sigma < \beta < 1, |\gamma| \leq T \}
\]
for $0 < \sigma < 1$ and $T \geq 1$. Further denote
\begin{equation}
	N(\sigma,T) := \sum_{\chi} N(\sigma,T,\chi). 
	\label{def:NumberOfZeros}
\end{equation}
Amongst all of the results recorded herein on zeros of Hecke $L$-functions, the proof of \cref{thm:BT_Weiss} only requires the following log-free zero density estimate, which we emphasize does \emph{not} assume $\sL$ is sufficiently large. This is a rephrasing of the authors' result \cite[Theorem 3.2]{JT_AZ} using the definition of $\sL$ in \eqref{def:sL}.

\begin{thm}[Thorner--Zaman] \label{thm:LFZD_HighLying}
	For $0 < \sigma < 1$ and $T \geq 1$, $N(\sigma,T) \ll (e^{162\sL} T^{81n_K+162})^{1-\sigma}$.
\end{thm}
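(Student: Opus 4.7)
The plan is to follow the Gallagher--Huxley--Jutila style zero-detecting argument adapted to Hecke $L$-functions of an abelian extension $L/K$, combined with a large sieve inequality for the character group $\hat{H}$. The two features we must track carefully are (i) the field-uniform dependence packaged in $\sL$ (which bundles $\log D_K$, $\log \cQ$, and $n_K \log n_K$ with regime-dependent weights) and (ii) the height aspect, where the main contribution will come from the degree-dependent density of zeros of $L(s,\chi,L/K)$ at heights up to $T$.

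The first step is to attach to each non-trivial zero $\rho$ of some $L(s,\chi,L/K)$ with $\Re\{\rho\} > \sigma$ and $|\Im\{\rho\}| \leq T$ a \emph{zero-detecting inequality}. Starting from the contour integral
\[
\frac{1}{2\pi i} \int_{(c)} -\frac{L'}{L}(s+\rho,\chi,L/K)\,\Gamma(s)\, y^s \, ds,
\]
and shifting past $s=0$ while invoking the gamma-factor and trivial-zero data of \S2.4, one derives an identity whose vanishing at $\rho$ forces either a short Dirichlet polynomial
\[
\bigl| \textstyle\sum_{\N\kp \leq y} \chi(\kp)(\log\N\kp)(\N\kp)^{-\rho}\bigr| \gg 1
\]
or a smoothed twin sum supported on $y < \N\kn \leq y^A$ to be large, with the parameter $y$ to be optimized. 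The length of the detector must be calibrated so that both alternatives contribute comparably to the final count.

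The second step is to square (or raise to $2k$) the detection inequality and sum over a well-spaced subset of the zeros; this converts the detection problem into an $L^2$-estimate for a Dirichlet polynomial in $(\chi,t)$. The key tool is the large sieve inequality for Hecke characters of $L/K$, which takes the shape
\[
\sum_{\chi \in \hat{H}} \int_{-T}^{T} \Bigl| \sum_{N < \N\kn \leq 2N} a(\kn)\chi(\kn)(\N\kn)^{-it} \Bigr|^2 dt \ll \bigl( N + T^{n_K} D_K \cQ \bigr) \sum_{N < \N\kn \leq 2N} |a(\kn)|^2.
\]
Feeding in a $k$-fold convolution of $\Lambda_K \cdot \mathbf{1}_{\N\kn \leq y}$ and using the decay of the gamma factor for $|t| \gg 1$ produces a bound whose main term is $(y + T^{n_K} D_K \cQ) y^{-2\sigma + o(1)}$; balancing $y$ against the threshold $T^{n_K} D_K \cQ$ yields an estimate of the form $Q^{C(1-\sigma)}$ with $Q$ proportional to $T^{n_K} D_K \cQ$, up to factors involving $e^{O(n_K)}$ and $n_K^{O(n_K)}$.

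The final step is bookkeeping: one rewrites the composite exponent in terms of $\sL$, noting that $\sL$ was precisely designed in \eqref{def:sL} to dominate $\log D_K$, $\log \cQ$, and (in the relevant regime) $n_K \log n_K$ with weights matching those that emerge from the large sieve optimization. The constants $162$ in the $\sL$-aspect and $81 n_K + 162$ in the $T$-aspect come from doubling (once for each detection step, since the secondary sum requires an independent application of the large sieve). The main obstacle is this optimization: because $\sL$ is defined piecewise according to whether $n_K^{5n_K/6} \geq D_K^{4/3}\cQ^{4/9}$, one must verify that in each regime the worst-case exponent is the same constant $162$, so that the uniform bound stated in the theorem actually holds. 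The \emph{log-free} character of the estimate — no factor of $\log Q$ on the right — is preserved throughout by staying in $L^2$ and never invoking a pointwise bound on a Dirichlet polynomial, which is the classical insight of Gallagher that transfers to this setting.
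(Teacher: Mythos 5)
The paper does not actually prove this statement: it is quoted from the authors' earlier work \cite{JT_AZ} (their Theorem 3.2), and the only content of the ``proof'' here is the bookkeeping observation that $e^{162\sL}$ dominates the explicit expression in $D_K$, $\cQ$, and $n_K^{n_K}$ appearing there, in both branches of the piecewise definition \eqref{def:sL}. Your outline --- a zero-detecting Dirichlet polynomial, an $L^2$ mean value over pairs $(\chi,t)$, and an optimization of the detector length --- is indeed the architecture of the proof in \cite{JT_AZ}, which follows Weiss, so at the level of method you are aligned with the source.

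As a proof of this particular statement, however, the proposal has genuine gaps. First, the entire content of the theorem is the pair of explicit constants $162$ and $81n_K+162$; these propagate into essentially every exponent of \cref{thm:BT_Weiss,thm:BT_Weiss_sharp} (for instance, the hypotheses $B>162+E$ and $\ell\geq 82n_K+162$ in \cref{lem:HighLyingZeros} are calibrated against exactly these numbers). Saying they ``come from doubling'' is not a derivation: one must carry the constant from the mean value theorem through the detection dichotomy and the optimization of $y$, and then verify that the worst case over both regimes of \eqref{def:sL} lands at $162$ --- this last verification is the one piece of work the present paper actually does, and your sketch only flags it as an ``obstacle.'' Second, the large sieve inequality you invoke, with threshold $T^{n_K}D_K\cQ$, is itself a substantial explicit result (in \cite{JT_AZ} it rests on a Selberg-sieve weighted mean value theorem for Hecke characters), and its implied constant together with the attendant $e^{O(n_K)}$ losses feeds directly into the final $162$ through the $n_K\log n_K$ term in $\sL$; it cannot simply be quoted in the clean form you wrote. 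Third, the theorem is asserted for all $0<\sigma<1$ with no largeness hypothesis on $\sL$ (the paper stresses this, since \cref{thm:BT_Weiss} must hold uniformly); the detector argument only yields a nontrivial density estimate for $\sigma$ near $1$, and for $\sigma$ bounded away from $1$ one must fall back on the trivial zero count $\ll [L:K]\,T(\sL+n_K\log T)$ combined with $[L:K]\ll e^{4\sL/3}$ from \cref{lem:CharacterGroupBound} to confirm the stated bound. None of these is a wrong turn, but each must be carried out explicitly for the statement to be usable downstream.
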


The proof of \cref{thm:BT_Weiss_sharp} also requires a completely explicit zero density estimate for ``low-lying" zeros. Define for $0 < \lambda < \sL$,
\begin{equation}
\begin{aligned}
\cN(\lambda) & := \sum_{\chi} N(1-\tfrac{\lambda}{\sL}, \eta^{-2}, \chi).
\end{aligned}
\label{def:N_lambda}
\end{equation}
\cref{thm:ZeroFreeRegions} states that $\cN(0.0875) \leq 1$ and $\cN(0.2866) \leq 2$ for $\sL$ sufficiently large depending on $\eta$. For larger values of $\lambda$, we use the following:

\begin{thm}[Thorner--Zaman] \label{thm:LFZD-LowLying} Assume $\sL$ is sufficiently large depending on $\eta$.
Let $\epsilon_0 > 0$ be fixed and sufficiently small. If $0 < \lambda <  \epsilon_0 \sL$ then 
\[
\cN(\lambda) \leq e^{162 \lambda + 188}.
\]
The bounds for $\cN(\lambda)$ in \cite[Table 1]{JT_AZ} are superior when $0 < \lambda \leq 1$. 
\end{thm}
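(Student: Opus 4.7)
The natural plan is to derive the bound directly from the general log-free zero density estimate in \cref{thm:LFZD_HighLying} with the parameter choices $T = \eta^{-2}$ and $\sigma = 1 - \lambda/\sL$, so that $1-\sigma = \lambda/\sL$. Substituting into that estimate yields
\[
\cN(\lambda) \;=\; N\!\Bigl(1-\tfrac{\lambda}{\sL},\,\eta^{-2}\Bigr) \;\leq\; C_0 \cdot e^{162\lambda}\cdot \eta^{-2(81 n_K + 162)\lambda/\sL}
\]
for some absolute constant $C_0$. The entire problem therefore reduces to showing that the residual factor $C_0 \cdot \eta^{-2(81 n_K + 162)\lambda/\sL}$ is bounded by $e^{188}$, i.e., to controlling the exponent $(81 n_K + 162)\lambda/\sL$ uniformly.

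I would split this exponent as $162\lambda/\sL + 81\,n_K\lambda/\sL$. The first summand is at most $162\epsilon_0$ by the hypothesis $\lambda < \epsilon_0 \sL$, which can be made as small as desired by taking $\epsilon_0$ small relative to $\log(\eta^{-1})$. For the second summand I would invoke the unconditional lower bound $\sL \geq (\tfrac{5}{12}+\delta_0) n_K \log n_K$ from \eqref{eqn:sL_lowerbound}. This yields $n_K/\sL \leq 1/\bigl((\tfrac{5}{12}+\delta_0)\log n_K\bigr)$ once $n_K$ exceeds a threshold $N_0 = N_0(\eta,\epsilon_0)$, so $n_K\lambda/\sL$ is small in this regime; for $n_K \leq N_0$ we instead use $n_K\lambda/\sL \leq N_0\epsilon_0$, which is just a constant. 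Using that $\sL$ is sufficiently large depending on $\eta$ then disposes of the remaining small-parameter cases, and the numerical constant $188$ emerges by assembling the upper bound on $\log C_0$, the term $324\epsilon_0\log(\eta^{-1})$, and the bounded $n_K$-dependent contribution, with $\epsilon_0$ chosen so the total stays below $188$.

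The principal obstacle is the tension between the $T^{O(n_K)}$ factor intrinsic to \cref{thm:LFZD_HighLying} (which comes from gamma factors in the completed $L$-function) and the $n_K$-free target $e^{162\lambda+188}$. Making the exponent $(81n_K+162)\lambda/\sL$ uniformly bounded is genuinely nontrivial and relies decisively on the unconditional lower bound $\sL \gtrsim n_K\log n_K$ in \eqref{eqn:sL_lowerbound}; without that input one could not uniformize over $n_K$. The sharper tabulated bounds quoted for $0 < \lambda \leq 1$ would be handled separately by the explicit table computed in \cite[Table 1]{JT_AZ}, which refines the crude constants arising from the substitution above.
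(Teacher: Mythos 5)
Your plan --- to deduce \cref{thm:LFZD-LowLying} from \cref{thm:LFZD_HighLying} by setting $\sigma = 1-\lambda/\sL$ and $T=\eta^{-2}$ --- is not how the paper proceeds, and it does not close. The paper simply imports the statement from the authors' earlier work (\cite[Theorem 8.6]{JT_AZ}), where the constants $162$ and $188$ come from a separate, fully explicit low-lying zero-detection argument; \cref{thm:LFZD_HighLying} carries only an inexplicit implied constant, so the additive constant $188$ cannot be recovered from it even in principle.

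The more substantive gap is in your treatment of the residual factor $\eta^{-2(81n_K+162)\lambda/\sL}$. The target bound has exponent exactly $162\lambda$, identical to the exponent produced by $e^{162\sL(1-\sigma)}=e^{162\lambda}$, so there is no slack: the residual factor must be bounded by an absolute constant uniformly in $n_K$ and in $\lambda\in(0,\epsilon_0\sL)$, and it is not. At $\lambda=\epsilon_0\sL$ the exponent of $\eta^{-1}$ is $2(81n_K+162)\epsilon_0$, unbounded in $n_K$ for any fixed $\epsilon_0$; your fix via \eqref{eqn:sL_lowerbound} replaces $81n_K\lambda/\sL$ by a quantity $\asymp\lambda/\log n_K$, which for $n_K$ of moderate size is unbounded in $\lambda$ and hence contributes a genuine factor $e^{c\lambda}$ with $c=c(\eta,n_K)>0$ rather than a constant. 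Your two-case split ($n_K\le N_0$ versus $n_K>N_0$) therefore does not terminate: the first case forces $\epsilon_0\ll 1/(N_0\log(1/\eta))$, while enlarging $N_0$ only shrinks, but never eliminates, the $e^{c\lambda}$ loss in the second case. The best this route yields is $\cN(\lambda)\le e^{(162+\delta)\lambda + C(\delta,\eta)}$ for any fixed $\delta>0$ (using that $\sL$ is large in terms of $\eta$), which is strictly weaker than the stated $e^{162\lambda+188}$ once $\lambda$ is large. To obtain the stated inequality one genuinely needs the separate explicit estimate proved in \cite[Theorem 8.6]{JT_AZ}.
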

\begin{proof}
See \cite[Theorem 8.6]{JT_AZ} for details.	
\end{proof}

\section{Zeros outside a low-lying rectangle}
\label{sec:NegligibleZeros}

From \cref{lem:ShiftedContour_Dedekind}, it remains to estimate a sum over all non-trivial zeros of all Hecke $L$-functions $L(s,\chi,L/K)$. In this section, we demonstrate that the contribution of zeros is negligible if the zeros are either high-lying or far from the line $\Re\{s\}=1$. Throughout, we assume $1 \leq B \leq 1000$ is a fixed absolute constant. We begin by considering high-lying zeros. 

\begin{lem}
\label{lem:HighLyingZeros}
Let $T_{\star} \geq 1$ be arbitrary. Let $0 < E < \tfrac{2}{3}B$ be fixed.  Let 
\begin{equation}
B > 162 + E, \quad \ell \geq  82 n_K + 162, \quad \tfrac{1}{4} > \epsilon \geq 4 \ell x^{-E/(B\ell)}.
		\label{eqn:HighLyingZeros_assumptions}
		\end{equation}
For  $x \geq e^{B \sL}$, 
\begin{equation}
\frac{\log x}{x} \sum_{\chi} \sum_{\substack{\rho \\ |\Im\{\rho\}| >  T_{\star}} } |F(- \rho \log x)| \ll \frac{1}{T_{\star}}.
\label{eqn:HighLyingZeros}
\end{equation}
\end{lem}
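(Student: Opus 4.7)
The strategy is to apply \cref{lem:WeightChoice}(iv) to every zero $\rho$ with $|\gamma| > T_\star \geq 1$, taking $\alpha = \ell$. Since $\epsilon < 1/4$, $0 < \beta < 1$, and $|\rho| \geq |\gamma|$, this yields
\[
|F(-\rho \log x)| \leq \frac{4 x^\beta T_1^\ell}{|\gamma|^{\ell+1}\log x},\qquad T_1 := \frac{2\ell}{\epsilon},
\]
and the lower bound $\epsilon \geq 4\ell x^{-E/(B\ell)}$ translates to $T_1^\ell \leq 2^{-\ell} x^{E/B}$. The target bound \eqref{eqn:HighLyingZeros} therefore reduces to showing
\[
\frac{T_1^\ell}{x} \sum_\chi \sum_{|\gamma|>T_\star} \frac{x^\beta}{|\gamma|^{\ell+1}} \ll \frac{1}{T_\star}.
\]

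To estimate the double sum I plan to decompose $|\gamma|$ dyadically into $(2^jT_\star, 2^{j+1}T_\star]$ for $j \geq 0$ and handle each inner sum $\Sigma_j := \sum_\chi \sum_{|\gamma| \leq 2^{j+1}T_\star} x^\beta$ by Abel summation in $\beta$ combined with the log-free zero density estimate \cref{thm:LFZD_HighLying}. Writing $B_j := e^{162\sL}(2^{j+1}T_\star)^{81 n_K + 162}$ and using $N(\sigma, 2^{j+1}T_\star) \ll B_j^{1-\sigma}$, I obtain $\Sigma_j \ll B_j + B_j \log x \int_0^1 (x/B_j)^\sigma\,d\sigma$. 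The hypotheses $B > 162 + E$ and $\ell \geq 82 n_K + 162$ imply that $\log B_j/\log x \leq 162/B + E(81 n_K + 162)/(B\ell) = 1 - c_0$ for some $c_0 > 0$ depending only on $B$ and $E$, uniformly over the dyadic range in which $2^{j+1}T_\star$ stays below an explicit threshold $T_{\max}$; in that range $\log x/\log(x/B_j) \leq 1/c_0$, so the integration-by-parts term collapses to $\leq x/c_0$ and $\Sigma_j \ll x$ with no extraneous $\log x$. For larger $j$ I would fall back on the crude bound $\Sigma_j \ll \max(x, B_j)\log x$ and exploit the exponent excess $\ell - 81 n_K - 161 \geq n_K + 1 \geq 2$ guaranteed by $\ell \geq 82 n_K + 162$, which makes the geometric series over $j$ absolutely convergent.

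Summing the dyadic pieces and multiplying by $T_1^\ell/x \leq 2^{-\ell} x^{E/B - 1}$, I will invoke $x \geq e^{B\sL}$ to replace $e^{162\sL}$ by $x^{162/B}$ wherever it appears. The condition $B > 162 + E$ then produces a strictly negative exponent of $x$ in every tail term, and the remaining factors of $\log x$ are absorbed by the elementary estimate $\log x \cdot x^{-\delta} \leq 1/(\delta e)$ valid for any fixed $\delta > 0$.

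The main obstacle is the bookkeeping of thresholds: the split between the sharp bound $\Sigma_j \ll x$ and the crude bound $\Sigma_j \ll \max(x, B_j)\log x$ must be placed exactly at the point $\log B_j \approx (1 - c_0)\log x$, and one has to verify that every surviving summand is dominated either by a negative power of $x$ (contributed by $B > 162 + E$) or by the super-polynomial decay $(2^j T_\star)^{-(\ell - 81 n_K - 161)}$ (contributed by $\ell \geq 82 n_K + 162$). It is precisely the simultaneous use of these two hypotheses that upgrades a naive $(\log x)/T_\star$ bound into the genuine $1/T_\star$ claimed in \eqref{eqn:HighLyingZeros}.
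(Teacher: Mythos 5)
Your opening move---applying \cref{lem:WeightChoice}(iv) with the \emph{fixed} exponent $\alpha=\ell$ to every zero---creates a gap that the rest of the argument cannot repair, because the intermediate target you reduce to is false in general. For a zero $\rho=\beta+i\gamma$ with $T_{\star}\le|\gamma|<2\ell/\epsilon$ and $\beta$ close to $1$ (say $1-\beta\le 1/\log x$), your bound contributes a single term of size roughly $\frac{1}{|\gamma|}\bigl(\tfrac{2\ell}{\epsilon|\gamma|}\bigr)^{\ell}\cdot x^{\beta-1}\gg\bigl(\tfrac{2\ell}{\epsilon|\gamma|}\bigr)^{\ell}e^{-1}$, and with $|\gamma|\asymp T_\star=1$ this can be as large as $2^{-\ell}x^{E/B}$, vastly exceeding the claimed $1/T_{\star}$. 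Nothing rules such a zero out: this lemma does not assume $\sL$ is large (it feeds into \cref{thm:BT_Weiss}, which is uniform in $L/F$), so no zero-free region is available, and \cref{thm:LFZD_HighLying} only caps the number of such zeros at $O(1)$, not zero. Equivalently, in your dyadic analysis the block contribution is $\frac{1}{2^{j}T_\star}\bigl(\tfrac{T_1}{2^{j}T_\star}\bigr)^{\ell}\cdot\frac{\Sigma_j}{x}$ with $\Sigma_j$ genuinely allowed to be of order $x$, and the factor $(T_1/(2^{j}T_\star))^{\ell}$ is an uncontrolled penalty of size up to $(2\ell/\epsilon)^{\ell}$ on every block below $T_1=2\ell/\epsilon$; your hypotheses on $B$, $E$, $\ell$ never touch this factor.

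The paper's proof avoids this by exploiting that $\alpha$ in \cref{lem:WeightChoice}(iv) may be taken to depend on the zero: it sets $\alpha=\ell(1-\beta)$, so the penalty enters only as $\bigl(\tfrac{2\ell}{\epsilon T}\bigr)^{\ell(1-\beta)}$, which degenerates to $1$ precisely for the dangerous zeros near $\Re\{s\}=1$. Writing $\beta=1-\lambda/\sL$ and using $2\ell/\epsilon\le\tfrac12 x^{E/(B\ell)}$ together with $x\ge e^{B\sL}$, each term in a dyadic block $T\le|\gamma|\le 2T$ is $\le\tfrac{4}{T}e^{-(B-E)\lambda}(2T)^{-(82n_K+162)\lambda/\sL}$; partial summation against $N(1-\lambda/\sL,2T)\ll e^{162\lambda}(2T)^{(81n_K+162)\lambda/\sL}$ then converges because $B>162+E$ (this is where that hypothesis is actually used, not in controlling $\log B_j/\log x$), the excess $\ell-(81n_K+162)\ge n_K$ makes the residual $(2T)^{-n_K\lambda/\sL}\le 1$ harmless, and the $1/T$ prefactor alone drives the dyadic sum to $O(1/T_{\star})$. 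If you want to keep a fixed exponent, you would at minimum need a two-regime split---$\alpha=0$ for $|\gamma|<2\ell/\epsilon$ and $\alpha=\ell$ beyond---but as written, with $\alpha=\ell$ throughout, the argument fails on the low blocks.
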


\begin{proof} 
	Write $\rho = \beta+i\gamma$ with $\beta = 1-\frac{\lambda}{\sL}$. If $T \geq 1$, then \cref{lem:WeightChoice}(iv) with $\alpha = \ell(1-\beta)$ and our choices of our conditions on $\epsilon, \ell,$ and $x$ imply that  
	\begin{equation}
		\begin{aligned}
		\frac{\log x}{x} |F(-\rho \log x)| 
 	\leq \frac{2e^{\epsilon} x^{\beta-1}}{T}   \Big( \frac{2\ell}{\epsilon T} \Big)^{\ell(1-\beta)}  	\leq \frac{4}{T} e^{-(B- E) \lambda} (2T)^{-(82n_K + 162)\lambda/\sL}.
		\end{aligned}
	\end{equation}
Using \cref{thm:LFZD_HighLying} via partial summation, we see that
	\begin{align*}
	& \frac{T\log x}{x} \sum_{\chi} \sum_{\substack{\rho \\ T \leq |\Im\{\rho\}| \leq 2T} } |F(-\rho \log x)|  \\
	& \quad \ll \frac{e^{-(B-E-162)\sL}}{ (2T)^{n_K}} + \Big(B-E + \frac{n_K \log(2T)}{\sL}\Big)\int_{0}^{\sL} e^{-(B-E-162)\lambda} (2T)^{-n_K\lambda/\sL} d\lambda\ll 1,
	\end{align*}
	since $B > 162 + E$. Overall, this implies that the LHS of \eqref{eqn:HighLyingZeros} is
	\begin{equation*}
	\begin{aligned}
		 & \leq \frac{\log x}{x} \sum_{\chi} \sum_{k=0}^{\infty} \sum_{ \substack{\rho \\ 2^k T_{\star} \leq \Im\{\rho\} < 2^{k+1} T_{\star} }} |F(-\rho \log x)|  
		 \ll \frac{1}{T_{\star}}  \sum_{k=0}^{\infty} \frac{1}{2^k}
		 \ll \frac{1}{T_{\star}}, 
	\end{aligned}	
	\end{equation*}
	as desired. 
\end{proof}

As we shall see in the next section, an appropriate combination of \cref{lem:ShiftedContour_Dedekind,lem:HighLyingZeros,thm:LFZD_HighLying} suffices to establish \cref{thm:BT_Weiss}. For \cref{thm:BT_Weiss_sharp}, we must also show low-lying zeros far to the left of $\Re\{s\}=1$ contribute a negligible amount.   

\begin{lem}
	\label{lem:LowLyingZeros} Let $0 \leq R \leq \tfrac{1}{2}\sL$ be arbitrary.	Assume \eqref{eqn:HighLyingZeros_assumptions} holds. For $x \geq e^{B\sL}$, 
	\begin{equation*}
	\begin{aligned}
	\frac{\log x}{x} \sum_{\chi}  \sumP_{\rho} |F(-\rho \log x)|
	& \ll x^{-(B - E - 162) R/B\sL}
	\end{aligned}
\end{equation*}
where the marked sum $\sum'$ runs over zeros $\rho = \beta+i\gamma$ of $L(s,\chi,L/K)$, counting with multiplicity, satisfying $0 <\beta \leq 1 - R/\sL$ and $|\gamma| \leq \epsilon^{-1}$.
\end{lem}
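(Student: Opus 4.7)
The plan is to adapt the argument of \cref{lem:HighLyingZeros} to zeros restricted to the rectangle $0<\beta \leq 1-R/\sL,\ |\gamma|\leq \epsilon^{-1}$, by combining the pointwise bounds from \cref{lem:WeightChoice} with the log-free zero density estimate \cref{thm:LFZD_HighLying} applied at $T=\epsilon^{-1}$.

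First I would verify that the hypotheses \eqref{eqn:HighLyingZeros_assumptions} and $x \geq e^{B\sL}$ imply the clean density bound
\[
N(\sigma, \epsilon^{-1}) \ll x^{(162+E)(1-\sigma)/B}.
\]
Indeed, $\log x \geq B\sL$ gives $e^{162\sL(1-\sigma)} \leq x^{162(1-\sigma)/B}$, while $\epsilon^{-1} \leq x^{E/(B\ell)}/(4\ell)$ combined with $\ell \geq 82n_K+162$ yields $\epsilon^{-(81n_K+162)(1-\sigma)} \leq x^{E(1-\sigma)/B}$. Multiplying these two estimates inside \cref{thm:LFZD_HighLying} produces the claimed bound.

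Next, I would apply \cref{lem:WeightChoice}(iv) with $\alpha=0$ to each zero $\rho=\beta+i\gamma$, giving
\[
\frac{\log x}{x}\,|F(-\rho \log x)| \leq \frac{2 e^{\epsilon} x^{\beta-1}}{|\rho|}.
\]
For the zeros with $|\rho|\geq 1$ (the generic case), the $|\rho|^{-1}$ factor is harmless and the summand is $\ll x^{\beta-1}$. The remaining zeros (necessarily with $|\gamma|<1$ and $\beta<1$) I would treat separately by applying \cref{lem:WeightChoice}(v), combined with the density estimate at $T=1$, which gives a contribution of the same order after a straightforward dyadic split in $|\gamma|$.

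The heart of the proof is then a partial summation in $\sigma$. Writing $\sigma_0 = 1-R/\sL$, integration by parts yields
\[
\sum_\chi \sumP_\rho x^{\beta-1} = \frac{N(0,\epsilon^{-1})}{x} - x^{\sigma_0-1} N(\sigma_0,\epsilon^{-1}) + (\log x)\int_0^{\sigma_0} x^{\sigma-1} N(\sigma,\epsilon^{-1})\, d\sigma.
\]
Plugging in the density bound from the first step, the boundary term at $\sigma_0$ gives exactly $x^{-R(B-162-E)/(B\sL)}$, while the boundary term at $\sigma=0$ contributes $x^{-(B-162-E)/B}$, which is smaller since $R \leq \sL/2$. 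For the integral, substituting $u = 1-\sigma$ and using $B > 162 + E$ produces
\[
(\log x)\int_{R/\sL}^{1} x^{-u(B-162-E)/B}\, du = \frac{B}{B-162-E}\left(x^{-R(B-162-E)/(B\sL)} - x^{-(B-162-E)/B}\right),
\]
which is $\ll x^{-R(B-162-E)/(B\sL)}$ with absolute constant (the $\log x$ cancels against $1/\log x$ coming from integration of the exponential, and the denominator $B-162-E$ is bounded away from zero by hypothesis).

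The main technical obstacle will be the clean handling of zeros with small $|\rho|$, where bound (iv) degenerates due to the $1/|s|$ factor. I plan to overcome this via a dyadic decomposition in $|\gamma|$: for each dyadic scale $T$ with $1 \leq T \leq \epsilon^{-1}$, use (iv) with $\alpha=0$ and apply the density estimate at that scale, so that the $1/|\rho|$ factor contributes a geometric series summing to $O(1)$; the residual zeros with $|\gamma|<1$ are controlled by applying (v) and using $N(\sigma,1) \ll x^{162(1-\sigma)/B}$, which is even stronger than the bound at $T = \epsilon^{-1}$.
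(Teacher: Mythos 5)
Your first and third paragraphs are sound and match the paper's argument: the density bound $N(\sigma,\epsilon^{-1})\ll x^{(162+E)(1-\sigma)/B}$ follows exactly as you say from \eqref{eqn:HighLyingZeros_assumptions} and $x\geq e^{B\sL}$, and the partial summation against it, with summand $x^{\beta-1}$, delivers the stated bound. The gap is in your treatment of the residual zeros. You route \emph{every} zero with $|\rho|<1$ through \cref{lem:WeightChoice}(v), but (v) reads $|F(-s\log x)|\leq e^{\sigma\epsilon}x^{\sigma}$ with no factor $1/(|s|\log x)$, so for such a zero you only get
\[
\frac{\log x}{x}\,|F(-\rho\log x)|\ \leq\ e^{\beta\epsilon}\,x^{\beta-1}\log x,
\]
an extra $\log x$ compared with (iv). The zeros that dominate the marked sum --- those with $\beta$ just below $1-R/\sL$ and $|\gamma|$ small (e.g.\ real zeros) --- have $|\rho|<1$ and therefore land in your residual class. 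Partial summation against $N(\sigma,1)$ then gives these zeros a total contribution $\ll(\log x)\,x^{-(B-162)R/(B\sL)}$, which exceeds the claimed bound by a factor $(\log x)\,x^{-ER/(B\sL)}$. This is unbounded when $R$ is small; in particular the lemma must hold with $R=0$ (that is exactly how it is used, via \cref{prop:FinalArgument}, in the proof of \cref{thm:BT_Weiss}), where the claim is $O(1)$ but your argument only yields $O(\log x)$. A single hypothetical real zero at $\beta=1-1/\sL$ already exhibits the discrepancy. No dyadic split in $|\gamma|$ can repair this, since (v) carries no $|\gamma|$-dependence.

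The repair is to move the cutoff from $|\rho|=1$ down to $|\rho|=1/4$, which is what the paper does. For $|\rho|\geq 1/4$, bound (iv) with $\alpha=0$ gives $\frac{\log x}{x}|F(-\rho\log x)|\ll x^{\beta-1}$ outright, since $1/|\rho|\leq 4$; your partial summation then applies to all of these zeros, including the near-real ones. For $|\rho|\leq 1/4$ one automatically has $\beta\leq 1/4$, so (v) gives a summand $\leq x^{-3/4}\log x$, and here one should \emph{not} use the log-free density estimate (at $\sigma$ near $0$ it permits $\approx e^{162\sL}=x^{162/B}$ zeros, and $x^{162/B-3/4}\log x$ need not be small for $B$ close to $162+E$); instead a crude count of zeros in a bounded disc via \cite[Lemma 2.1]{LMO}, namely $\ll\sL$ per character and $\ll[L:K]\sL\ll e^{2\sL}\leq x^{2/B}$ in total by \cref{lem:CharacterGroupBound}, makes this contribution $\ll x^{2/B-3/4}\log x$, which is negligible.
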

\begin{proof} From our choices of $\epsilon, \ell$ in \eqref{eqn:HighLyingZeros_assumptions} and \cref{thm:LFZD_HighLying},  it follows that
	\[
	N(1-\tfrac{\lambda}{\sL}, \epsilon^{-1}) \ll e^{162 \lambda} (1/\epsilon)^{(81n_K+162)\lambda/\sL} \ll e^{162 \lambda} x^{E\lambda/B\sL} \ll x^{(162+E)\lambda/B\sL}
	\]
	for $0 < \lambda < \sL$, where $N(\sigma,T)$ is given by \eqref{def:NumberOfZeros}. Write $\rho = \beta+i\gamma$ with $\beta = 1-\frac{\lambda}{\sL}$ for some non-trivial zero $\rho$ appearing in the marked sum. By \cref{lem:WeightChoice}(iv) with $\alpha = 0$ and \cref{lem:WeightChoice}(v), it follows that
	\begin{equation}
	\frac{\log x}{x} |F(-\rho \log x)| \ll \begin{cases} x^{-\lambda/\sL} & \text{for $|\rho| \geq 1/4$}, \\ 
 		x^{-3/4} \log x & \text{for $|\rho| \leq 1/4$.}	
	 \end{cases}
	 \label{eqn:LowLyingZeros_Contribution}
	\end{equation}
	To clarify the second inequality, we observe by \cref{lem:WeightChoice}(v) that $|F(-\rho \log x)| \ll x^{\beta} \ll x^{1/4}$ for $|\rho| \leq 1/4$. Thus, by \eqref{eqn:LowLyingZeros_Contribution} and partial summation, we have that 
	\begin{align*}
	 \frac{\log x}{x}\sum_{\chi} \sumP_{\substack{ |\rho| \geq 1/4}} |F(-\rho\log x)| 
	&  \ll x^{\frac{-(B-E-162)}{B}} + \frac{\log x}{\sL}\int_{R}^{\sL} x^{\frac{-(B-E-162)\lambda}{B\sL}} d\lambda \\  
	& \ll x^{-(B - E - 162) R/B\sL}.
	\end{align*}
	Moreover, by \eqref{eqn:LowLyingZeros_Contribution}, a crude application of \cite[Lemma 2.1]{LMO}, and \cref{lem:CharacterGroupBound}, it follows that
	\begin{align*}
	 \frac{\log x}{x}\sum_{\chi} \sumP_{\substack{ \rho \\ |\rho| \leq 1/4}} |F(-\rho\log x)| 
	&   \ll [L:K] \sL  x^{-3/4} \log x \ll x^{-3/4} e^{2 \sL} \log x \ll x^{-\frac{3}{4}+\frac{3}{B}}.
	\end{align*}
	Combining these estimates yields the desired result since, by our assumptions on $B$ and $R$, $x^{-(B - E - 162) R/B\sL} \gg x^{-(B-E-162)/2B} \gg x^{-1/2} \gg x^{-3/4 + 3/162} \gg x^{-3/4+3/B}$.  
	\end{proof}
We package these lemmas into the following convenient proposition. 

\begin{prop} \label{prop:FinalArgument}
	Let $0 \leq R \leq \tfrac{1}{2}\sL$ be arbitrary. Let $0 < E < \tfrac{2}{3}B$ be fixed. Assume that
	\begin{equation}
		B >  162+E,
		\qquad
		\ell \geq 82n_K + 162,
		\qquad
		\tfrac{1}{4} > \epsilon \geq 4\ell x^{-E/(B\ell)}.
		\label{eqn:FinalArgument_assumptions}
	\end{equation}
	If $x \geq e^{B \sL}$ and $S(x)$ is given by \eqref{def:S_WeightedPrimes}, then
	\begin{equation}
		\begin{aligned}
		 \frac{|G|}{|C|} \frac{S(x)}{e^{\epsilon} x} 
			& \leq 1 + \frac{\log x}{e^{\epsilon}x} \sum_{\chi} \sumS_{\rho} |F(-\rho \log x)| 	+ O\big(\epsilon + x^{-(B - E - 162) R/B\sL} \big),
		\end{aligned}
		\label{eqn:FinalArgument_conclusion}
	\end{equation}
	where the sum $\sum^{\star}$ indicates a restriction to non-trivial zeros $\rho$ of $L(s,\chi,L/K)$, counted with multiplicity, satisfying $1-R/\sL < \Re\{\rho\} < 1$ and $|\Im\{\rho\}| \leq \epsilon^{-1}$.
\end{prop}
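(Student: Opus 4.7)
The plan is to assemble \cref{lem:ShiftedContour_Dedekind}, \cref{lem:HighLyingZeros}, and \cref{lem:LowLyingZeros} into a single bound and then verify that the auxiliary error terms are absorbed. Starting from \cref{lem:ShiftedContour_Dedekind}, I would partition the double sum over non-trivial zeros $\rho$ of $L(s,\chi,L/K)$ into three disjoint ranges: (a) \emph{high-lying} zeros with $|\Im(\rho)| > \epsilon^{-1}$; (b) \emph{far-left low-lying} zeros with $|\Im(\rho)| \leq \epsilon^{-1}$ and $\Re(\rho) \leq 1-R/\sL$; and (c) the remaining zeros with $|\Im(\rho)| \leq \epsilon^{-1}$ and $1-R/\sL < \Re(\rho) < 1$, which are precisely what the starred sum $\sumS$ in \eqref{eqn:FinalArgument_conclusion} collects.

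Next I would apply \cref{lem:HighLyingZeros} with $T_{\star} = \epsilon^{-1}$ to bound the contribution of (a). The hypotheses \eqref{eqn:HighLyingZeros_assumptions} are met verbatim by \eqref{eqn:FinalArgument_assumptions}, and the conclusion yields a contribution of size $O(\epsilon)$. Piece (b) is handled directly by \cref{lem:LowLyingZeros}, whose hypotheses again match \eqref{eqn:FinalArgument_assumptions}; this contributes $O(x^{-(B-E-162)R/B\sL})$, exactly the second term in the proposed error.

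It then remains to verify that the two auxiliary error terms in \cref{lem:ShiftedContour_Dedekind}, namely $n_L x^{-1}\log x$ and $x^{-5/4}(2\ell/\epsilon)^{\ell}\log D_L$, are dominated by $O(\epsilon + x^{-(B-E-162)R/B\sL})$. For the first, \cref{lem:CharacterGroupBound} gives $n_L \ll \sL e^{4\sL/3}$; since $x \geq e^{B\sL}$ with $B > 162$, this term decays like $\sL\, e^{-(B-4/3)\sL}\log x$ and is trivially absorbed. For the second, the standing hypothesis $\epsilon \geq 4\ell x^{-E/(B\ell)}$ implies $(2\ell/\epsilon)^{\ell} \leq 2^{-\ell} x^{E/B}$, and combined with the trivial bound $\log D_L \ll \sL \ll \log x$ this term is bounded by $2^{-\ell} x^{-5/4+E/B}\log x$. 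Since $E < 2B/3$ forces $-5/4+E/B < -7/12$, the term is absorbed into either of the two displayed error terms.

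I do not anticipate a serious technical obstacle here: this proposition is essentially a packaging statement, and the hypotheses \eqref{eqn:FinalArgument_assumptions} are deliberately chosen so that both \cref{lem:HighLyingZeros} (with $T_{\star} = \epsilon^{-1}$) and \cref{lem:LowLyingZeros} can be invoked simultaneously. The only piece of bookkeeping requiring care is to confirm that the auxiliary parameter $E$ interacts consistently across the two zero-counting lemmas and is compatible with the absorption of the $x^{-5/4}(2\ell/\epsilon)^{\ell}\log D_L$ term, but this is a routine check.
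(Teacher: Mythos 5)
Your decomposition and assembly are exactly the paper's proof: take $T_\star=\epsilon^{-1}$ in \cref{lem:HighLyingZeros} for the high-lying zeros, \cref{lem:LowLyingZeros} for the far-left low-lying zeros, and absorb the two auxiliary error terms from \cref{lem:ShiftedContour_Dedekind}. The only flaw is your claim that $\log D_L \ll \sL$: by the conductor--discriminant formula $\log D_L = \sum_{\chi}\log D_{\chi} \leq [L:K]\log(D_K\cQ)$, and $[L:K]$ can be as large as $e^{4\sL/3}$ (\cref{lem:CharacterGroupBound}), so the correct bound is $\log D_L \ll \sL e^{4\sL/3} \ll x^{4/(3B)}\log x$. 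This costs an extra factor of $x^{4/(3B)} \leq x^{1/121}$ in your estimate of the $x^{-5/4}(2\ell/\epsilon)^{\ell}\log D_L$ term, which still decays like a negative power of $x$ since $-\tfrac{5}{4}+\tfrac{E}{B}+\tfrac{4}{3B} < -\tfrac{1}{2}$, so the absorption into $O(\epsilon)$ survives and the proof is otherwise sound.
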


\begin{proof} 
	 Let $T_{\star} = 1/\epsilon$.  It follows from our hypothesis \eqref{eqn:FinalArgument_assumptions} along with \cref{lem:ShiftedContour_Dedekind,lem:HighLyingZeros,lem:LowLyingZeros} that
		\begin{equation}
		\begin{aligned}
		 \frac{|G|}{|C|} \frac{S}{e^{\epsilon} x}
			& \leq 1 + \frac{\log x}{e^{\epsilon}x} \sum_{\chi} \sumS_{\rho} |F(-\rho \log x)| 	\\
			& \qquad  + O\Big(\epsilon + x^{-(B - E - 162) R/B\sL} + n_L  x^{-1} \log x + x^{-5/4 } (2\ell /\epsilon)^{\ell} \log D_L \Big).
		\end{aligned}
		\label{eqn:FinalArguments_step1}
	\end{equation}
	It remains to bound the third and fourth expressions in the error term by $\epsilon$. Since $E < B$ and $\ell \geq 244$, we see that
	\[
	\epsilon > x^{-E/B \ell} > x^{-1/\ell}  > x^{-1/244}.
	\]
	Moreover, $n_L = n_K [L:K] \ll \sL e^{2\sL} \ll x^{3/162}$ by \cref{lem:CharacterGroupBound} and \eqref{eqn:sL_lowerbound}. Similarly, since $\log D_L = \sum_{\chi} \log D_{\chi} \leq [L:K] \log(D_K \cQ)$, it follows that
	\[
		(2\ell/\epsilon)^{\ell} \log D_L \ll x^{E/B}  \sL [L:K]  \ll   x^{2/3} \sL e^{2\sL} \ll x^{2/3+3/162}.
	\]
	Applying these estimates in \eqref{eqn:FinalArguments_step1} yields \eqref{eqn:FinalArgument_conclusion}. 
\end{proof}

\section{Proof of \cref{thm:BT_Weiss}}
\label{sec:Proof_BT_easy}

In comparison to \cref{thm:BT_Weiss_sharp}, the proof of \cref{thm:BT_Weiss} is quite simple, requiring only the log-free zero density estimate of Hecke $L$-functions given by \cref{thm:LFZD_HighLying}. Recall this result is uniform over all extensions $L/F$ and therefore we do not assume $\sL$ is sufficiently large. \\
\begin{proof}[Proof of Theorem \ref{thm:BT_Weiss}]
Select 
\begin{equation}
	B = 244.5, \quad  E = 82.1, \quad \ell = 82n_K + 162, \quad \epsilon = 1/8, \quad \text{and } R = 0.
	\label{eqn:Proof_Thm1.1_choices}
\end{equation}
Let $M_0 > 0$ be a sufficiently large absolute constant. For $x \geq x_0 := e^{244.5 \sL} + M_0 n_K^{244.5 n_K},$ we claim these are valid choices to invoke \cref{prop:FinalArgument}. It suffices to check $\epsilon = \tfrac{1}{8} \geq 4\ell x^{-E/B\ell}$ for $x \geq x_0$. We need only show $(32 \ell)^{B\ell/E} \leq x_0$. This is visible from the fact that
\[
(32\ell)^{B\ell/E} \ll n_K^{\frac{244.5}{82.1}(82n_K + 162)} e^{O(n_K)} \ll n_K^{244.5n_K} \leq x_0,
\]
after enlarging $M_0$ if necessary. This proves the claim.

Therefore, by \cref{prop:FinalArgument}, we have that $S(x) \ll \frac{|C|}{|G|} x$ for $x \geq x_0$, because the corresponding restricted sum $\sum^{\star}$ is empty whenever $R=0$. Let $M \geq 1$ denote the implicit absolute constant in the above estimate for $S(x)$. Thus, by \cref{lem:ReduceMainTheorems} with $x_0 = e^{244.5\sL} + M_0 n_K^{244.5 n_K}, a = M$ and $b=c=0$, we have that
\[
\pi_C(x,L/F) <  \Big\{ M +  O\big( n_L x^{-1/2} + \frac{n_L \log x}{x}(e^{244.5\sL} +   n_K^{244.5 n_K}) \big) \Big\} \frac{|C|}{|G|} \mathrm{Li}(x)
\]
for $x \geq x_0$. By \cref{lem:CharacterGroupBound} and \eqref{def:sL}, notice that $n_L \ll e^{4\sL/3} \ll D_K^2 \cQ^2 n_K^{n_K}$. Thus, the desired result follows for $x \gg e^{245.9 \sL} + D_K^2 \cQ^2 n_K^{246 n_K}$.  
\end{proof}

\begin{remark} 
	~
	\begin{itemize}
		\item 	
	If one wishes to minimize the value of $B$ and hence minimize the exponents of $D_K$ and $\cQ$ in \eqref{eqn:range_BT_Weiss} then one may alternatively select
	\[
	B = 162.01, \quad  E = 0.95, \quad \ell = 82n_K + 162, \quad \epsilon = 1/8, \quad \text{and } R = 0 
	\]	
	in place of \eqref{eqn:Proof_Thm1.1_choices} taking also $x_0 = e^{162.01\sL} + M_0 n_K^{13,999 n_K}$. It follows that
	\[
	(32\ell)^{B\ell/E} \ll n_K^{\frac{162.01}{0.95}(82n_K + 162)} e^{O(n_K)} \ll n_K^{13,999n_K} \leq x_0. 
	\]
	Arguing as above, one deduces $\pi_C(x, L/F) \ll \frac{|C|}{|G|} \mathrm{Li}(x)$ for $x \gg e^{164 \sL} +  D_K^ 2 \cQ^2 n_K^{14,000 n_K}$ as claimed in the remark following \cref{thm:BT_Weiss} based on \eqref{def:sL}. 

	\item Similarly, to minimize the exponents of $n_K^{n_K}$ in \eqref{eqn:range_BT_Weiss}, one may alternatively select
	\[
	B = 359.5, \quad  E = 197, \quad \ell = 82n_K + 162, \quad \epsilon = 1/8, \quad \text{and } R = 0
	\]	
	in place of \eqref{eqn:Proof_Thm1.1_choices} taking also $x_0 = e^{359.5\sL}$. It follows by \eqref{eqn:sL_lowerbound} that
	\[
	(32\ell)^{B\ell/E} \ll n_K^{\frac{359.5}{197}(82n_K + 162)} e^{O(n_K)} \ll n_K^{149.65 n_K} \leq x_0,
	\]
	since $359.5 \times \frac{5}{12} > 149.7$. Arguing as above, one deduces $\pi_C(x, L/F) \ll \frac{|C|}{|G|} \mathrm{Li}(x)$ for $x \gg e^{360.9 \sL} \geq e^{4\sL/3} e^{359.5\sL}$ as claimed in the remark following \cref{thm:BT_Weiss}. 
	
	\end{itemize}
\end{remark}

The following two sections consists of the proof of \cref{thm:BT_Weiss_sharp} which is divided into cases depending on how close the zero $\rho_1$, defined in \cref{subsec:LPI-notation}, is to $\Re\{s\}=1$. The main steps are similar to the above proof for \cref{thm:BT_Weiss} but need a more refined analysis.  

\section{Proof of \cref{thm:BT_Weiss_sharp}: $\eta$-Siegel zero exists}
\label{sec:Proof_BT_SiegelZeroExists}
Let $\eta > 0$ be arbitrary and sufficiently small and let $\sL$ be sufficiently large depending only on $\eta$. The proof of \cref{thm:BT_Weiss_sharp} is divided into \cref{sec:Proof_BT_SiegelZeroExists,sec:Proof_BT_NoSiegelZero} by whether $\rho_1$ is an $\eta$-Siegel zero or not. \\

For this section, we consider the case when $\lambda_1 < \eta$. By \cref{thm:ZeroFreeRegions}, it follows that $\rho_1 = \beta_1 = 1 - \frac{\lambda_1}{\sL}$ is a simple real zero and $\chi_1$ is a real Hecke character. Suppose
\begin{equation}
B = 692, \qquad E = 344, \qquad \ell = 82n_K + 162, \qquad 4\ell x^{-344/692 \ell} \leq \epsilon < 1/4.
\label{eqn:Exceptional_parameters}
\end{equation}
With these choices, we claim for $x \geq e^{692 \sL}$ that $4\ell x^{-344/692 \ell} = o(1)$ as $\sL \rightarrow \infty$. If $n_K$ is uniformly bounded while $\sL \rightarrow \infty$ then this is immediate, so we may assume $n_K \rightarrow \infty$.  By \eqref{eqn:sL_lowerbound}, notice that $\ell = 82n_K + 162 \leq \{ 196.8 + o(1)\} \frac{\sL}{\log n_K} \leq 197 \frac{\sL}{\log n_K}$ for $n_K$ sufficiently large. Thus, for $n_K$ sufficiently large and $x \geq e^{692 \sL}$, we have that
\[
4\ell x^{-344/692\ell} \ll n_K e^{-344 \sL/\ell} \ll n_K e^{\frac{-344}{197} \log n_K} \ll n_K^{-0.7}.
\]
Hence, $4\ell x^{-344/692\ell} = o(1)$ as $n_K \rightarrow \infty$. This proves the claim, which implies the condition on $\epsilon$ in \eqref{eqn:Exceptional_parameters} is non-empty for $\sL$ sufficiently large.

Now, let $1 \leq R \leq \tfrac{1}{2}\sL$ be arbitrary. By \cref{prop:FinalArgument}, for $x \geq e^{692\sL}$, we have that 
\begin{equation}
		\begin{aligned}
		 \frac{|G|}{|C|} \frac{S(x)}{e^{\epsilon} x} 
			& \leq 1 + \frac{x^{-(1-\beta_1)}}{\beta_1} + \frac{\log x}{e^{\epsilon}x} \sum_{\chi} \sumS_{\rho \neq \rho_1} |F(-\rho \log x)| 	+ O\big(\epsilon + x^{-186 R/692\sL} \big),
		\end{aligned}
		\label{eqn:SiegelZero_FirstStep}
	\end{equation} 
where $\sum^{\star}$ runs over non-trivial zeros $\rho \neq \rho_1$ of $L(s,\chi)$, counted with multiplicity, satisfying
\[
1 - R/\sL < \Re\{\rho\} < 1, \qquad |\Im\{\rho\}| \leq  \epsilon^{-1}. 
\]
Note that the $\beta_1$ term in \eqref{eqn:SiegelZero_FirstStep}	arises from bounding $F(-\sigma \log x)$ in \cref{lem:WeightChoice}(v) with $\sigma = \beta_1$. We further subdivide our arguments depending on the range of $\lambda_1$. 
\subsection{$\lambda_1$ very small ($\frac{2\eta \sL}{\log x} \leq \lambda_1 < \eta$)}
\label{subsec:SiegelZero_VerySmall}
Here select $\epsilon = \eta^2$ and $R = \min\{ \frac{1}{82} \log(c_1/\lambda_1), \tfrac{1}{2}\sL \}$ for some fixed sufficiently small $c_1 > 0$. Since $4\ell x^{-344/692 \ell} = o(1)$ as $\sL \rightarrow \infty$, it follows that this choice of $\epsilon$ satisfies \eqref{eqn:Exceptional_parameters} for $\sL$ sufficiently large depending only on $\eta$. 

Hence, by \cref{thm:ZeroRepulsion_Full}, these choices imply that the restricted sum $\sum^{\star}$ in \eqref{eqn:SiegelZero_FirstStep} is empty for $\sL$ sufficiently large depending only on $\eta$. Moreover, we see that
\[
x^{- 186 R/693\sL} \leq e^{-\frac{186}{82} \log(c_1/\lambda_1)} \ll \lambda_1^2 \ll \eta^2,
\]
as $x \geq e^{692\sL}$ and $186/82 > 2$. Further, we have that
\[
\frac{x^{-(1-\beta_1)}}{\beta_1} = e^{- \lambda_1 \log x/\sL}\{ 1 + O(\lambda_1/\sL)\} < 1 - \eta + O(\eta^2),
\]
since $\frac{2\eta \sL}{\log x} \leq \lambda_1 < \eta$ and $e^{-t} < 1 - t/2$ for $0 \leq t \leq 1$. Overall, we conclude that $S(x) < \{ 2 - \eta + O(\eta^2) \} \frac{|C|}{|G|} x$ for $x \geq e^{692 \sL}$. By \cref{lem:ReduceMainTheorems,lem:CharacterGroupBound}, we conclude that
\[
\pi_C(x,L/F) < \{ 2 - \eta +O\big(\eta^2 + \sL e^{1.4\sL} ( x^{-1/2} + e^{693\sL} x^{-1} \log x )\big) \} \frac{|C|}{|G|} \mathrm{Li}(x)
\]
for $x \geq e^{692 \sL}$. Hence, in this subcase, \cref{thm:BT_Weiss_sharp} (with no error term) follows for $x \geq e^{694.5 \sL}$ after fixing $\eta > 0$ sufficiently small and recalling $\sL$ is sufficiently large.
\subsection{$\lambda_1$ extremely small ($\lambda_1 < \frac{2\eta \sL}{\log x} \leq \eta$)}
\label{subsec:SiegelZero_ExtremelySmall}
Here select 
{\small
\[
\epsilon = 4\ell x^{-344/692\ell} \quad \text{and} \quad R = \min\Big\{ \frac{\sL}{81 \sL + 25 n_K \log(1/\epsilon)} \log\Big( \frac{c_1}{\lambda_1} \cdot \frac{\sL}{\sL+n_K \log(1/\epsilon)} \Big) , \frac{1}{2} \sL \Big\}
\]}%
for some sufficiently small $c_1 > 0$. Again, since $4\ell x^{-344/692 \ell} = o(1)$ as $\sL \rightarrow \infty$, it follows that $\epsilon < 1/4$ for $\sL$ sufficiently large so this choice of $\epsilon$ satisfies \eqref{eqn:Exceptional_parameters}.

Now, from our choice of $R$ and \cref{thm:ZeroRepulsion_Full}, the restricted sum in \eqref{eqn:SiegelZero_FirstStep} is empty. For the main term, observe for $\sL$ sufficiently large and $\eta > 0$ sufficiently small that
{\small
\[
\frac{x^{-(1-\beta_1)}}{\beta_1} < \Big( 1 - \frac{\lambda_1 \log x}{2 \sL} \Big)\Big(1 + \frac{\lambda_1}{\sL} \Big) \leq 1- \frac{\lambda_1 \log x}{3 \sL},
\]}%
as $\lambda_1 < \frac{2\eta \sL}{\log x}$ and $e^{-t} < 1-t/2$ for $0 \leq t \leq 1$. To bound the error term in \eqref{eqn:SiegelZero_FirstStep}, notice that
{\small\[
81 \sL + 25 n_K \log(1/\epsilon) \leq \frac{81}{692} \log x + \frac{344 \cdot 25 n_K }{692(82n_K + 162)} \log x < \frac{185.9}{692} \log x,
\]}%
by our choice of $\epsilon$ and $\ell$ and since $x \geq e^{693 \sL}$. Consequently, $R \geq \frac{692 \sL}{185.9 \log x} \log( \frac{c_1' \sL}{\lambda_1 \log x})$ for some sufficiently small $c_1' > 0$, implying
{\small
\[
x^{- 186 R /692\sL} \ll \Big( \frac{\lambda_1 \log x}{\sL}\Big)^{\frac{186}{185.9}} \ll \eta^{1/2000} \Big( \frac{\lambda_1 \log x}{\sL} \Big),
\]}%
since $\lambda_1 < \frac{2\eta \sL}{\log x}$ and $\frac{0.1}{185.9} < \frac{1}{2000}$. Combining these observations into \eqref{eqn:SiegelZero_FirstStep} implies that
{\small
\begin{align*}
\frac{|G|}{|C|} \frac{S(x)}{e^{\epsilon} x}  < 2 - \frac{\lambda_1 \log x}{3 \sL} + O\Big(\epsilon +  \eta^{1/2000} \cdot \frac{\lambda_1 \log x}{\sL}  \Big)    
< 2 - 100 \lambda_1 + O(\epsilon) 
\end{align*}}%
as $\eta$ is sufficiently small. Rearranging and substituting the choice of $\epsilon$ and $\ell$, we see that 
{\small
\[
S(x) < \Big\{ 2 - 100 \lambda_1 + O\big( n_K x^{-\frac{1}{166n_K + 327}} \big) \Big\} \frac{|C|}{|G|} x 
\]}%
for $x \geq e^{692 \sL}$. Now, if $x \geq e^{694.9 \sL}$ then, by \cref{lem:CharacterGroupBound}, we have that 
\[
	n_L e^{692 \sL}  x^{-1} \log x \ll n_K e^{693.4 \sL} x^{-1} \log x \ll n_K x^{-1.5/694.9} \log x \ll n_K x^{-1/(166n_K+327)}.
\]
Similarly, $n_L x^{-1/2} \ll n_K x^{-1/(166n_K+327)}$. Thus, by the previous inequality and \cref{lem:ReduceMainTheorems}, it follows that
{\small
\begin{equation}
\label{eqn:SiegelZero-Finish}
\pi_C(x,L/F) < \Big\{ 2-100 \lambda_1 +O\big( n_K x^{-\frac{1}{166n_K + 327}}  \big) \Big\} \frac{|C|}{|G|} \mathrm{Li}(x)
\end{equation}
for $x \geq e^{694.9 \sL}$. As $\delta_0$ in \eqref{def:sL} is sufficiently small, this completes the proof of \cref{thm:BT_Weiss_sharp} when an $\eta$-Siegel zero exists. 
\hfill \qed
\\
\begin{remark}
	~
	\begin{itemize}
		\item 	In \eqref{eqn:Exceptional_parameters}, we could instead take $B = 502$ and $E = 198$ to establish \eqref{eqn:SiegelZero-Finish} except with an error term of $O(n_K x^{-1/(208 n_K + 411)} )$. To improve the error term, we chose the largest values of $B$ and $E$ which did not reduce the valid range of $x$ in \cref{thm:BT_Weiss_sharp}.  This range of $x$ is limited by the case addressed in \cref{subsec:NonExceptional_LimitCase}.
		\item As stated in \cref{thm:BT_Weiss_sharp}, we obtain the sharper bound $\pi_C(x,L/F) <   2 \frac{|C|}{|G|}\mathrm{Li}(x)$ from \eqref{eqn:SiegelZero-Finish} with good effective lower bounds for $\lambda_1$. To see this, notice the error term in \eqref{eqn:SiegelZero-Finish} is $\ll \lambda_1^{1.001}$ provided
			\[
			x \gg \Big( \frac{c_1 n_K}{\lambda_1^{1.001}} \Big)^{166n_K + 327} =: x_1, 
			\]
			where $c_1 > 0$ is some absolute constant. If the above holds then \eqref{eqn:SiegelZero-Finish} becomes
			\[
			\pi_C(x,L/F) < \Big\{ 2-100 \lambda_1 +O(\lambda_1^{1.001} ) \Big\} \frac{|C|}{|G|} \mathrm{Li}(x).
			\] 
			As $\lambda_1 \leq \eta$, this implies $\pi_C(x,L/F) < 2\frac{|C|}{|G|}\mathrm{Li}(x)$ by fixing $\eta$ sufficiently small. Hence, any effective upper bound on $x_1$ translates to a range of $x$ where the sharper bound for $\pi_C(x, L/F)$ holds. 
		  From the proof of Theorem 1' in Stark \cite{Stark}, we have that $\lambda_1 \gg \min\{ g(n_K)^{-1}, D_K^{-1/n_K} \cQ^{-1/2n_K} \}$ where $g(n_K)$ equals $1$ if $K$ has a normal tower over $\Q$ and equals $(2n_K)!$ otherwise. If $n_K \leq 10$ and $D_K \cQ$ is sufficiently large then we have that 
		  \[
		  x_1 \ll (1/\lambda_1)^{167n_K + 328} \ll D_K^{167 + 328/n_K} \cQ^{84 + 164/n_K} \ll D_K^{495} \cQ^{248} \ll x,
		  \]
		   for $x$ satisfying \eqref{eqn:range_BT_Weiss_sharp}, as desired. 
		  Thus, we may assume $n_K \geq 10$ in which case we have that
		  \begin{align*}
					  x_1 & \ll n_K^{167 n_K} (1/\lambda_1)^{167n_K + 328}\\
					  &  \ll D_K^{167 + 328/n_K} \cQ^{84 + 164/n_K} n_K^{167n_K} + n_K^{167 n_K} g(n_K)^{167n_K+328}\\
  					  &  \ll D_K^{200} \cQ^{101} n_K^{167n_K} + n_K^{167 n_K} g(n_K)^{167n_K + 328}.			  	
		  \end{align*}
		  Therefore, if $K$ has a normal tower over $\Q$ or $(2n_K)! \ll D_K^{1/n_K} \cQ^{1/2n_K}$ then 
		  \[
		  x_1 \ll D_K^{200} \cQ^{101} n_K^{167 n_K} e^{O(n_K)} \ll  D_K^{200} \cQ^{101} n_K^{168 n_K} \ll x,
		  \]
		  for $x$ satisfying \eqref{eqn:range_BT_Weiss_sharp} and $D_K \cQ n_K^{n_K}$ sufficiently large. Otherwise, $g(n_K) \leq (2n_K)! \leq (2n_K)^{2n_K}$ which implies that 
		  \[
		  x_1  \ll D_K^{200} \cQ^{101} n_K^{167n_K} + n_K^{333n_K^2}
		  \]
		  unconditionally. Thus, imposing $x \gg n_K^{334n_K^2}$ in addition to \eqref{eqn:range_BT_Weiss_sharp} also yields the sharper estimate for $\pi_C(x,L/F)$. This completes all cases. 
		  
	\end{itemize}
\end{remark}

\section{Proof of \cref{thm:BT_Weiss_sharp}: $\eta$-Siegel zero does not exist}
\label{sec:Proof_BT_NoSiegelZero}
In this section, we assume $\lambda_1 \geq \eta$ for sufficiently small $\eta >0$ and we will  show \cref{thm:BT_Weiss_sharp} holds with no error term. Recall $\sL$ is sufficiently large depending only on $\eta$. Assume $\lambda^{\star}>0$ satisfies
\begin{equation}
\lambda^{\star} < \min\{ \lambda', \lambda_2\},
\label{eqn:LambdaStar}
\end{equation}
where $\lambda'$ and $\lambda_2$ are defined in \cref{subsec:LPI-notation}. Select 
\begin{equation}
B > 360, \qquad E = 198, \qquad \ell = 82n_K + 162, \qquad \epsilon = \eta^2,
\label{eqn:NonExceptional_parameters}
\end{equation}
and let $R = R(\eta)$ be sufficiently large. We claim these choices satisfy the assumptions of \cref{prop:FinalArgument}. Since $\sL$ is sufficiently large depending only on $\eta$, it suffices to show, for $x \geq e^{B\sL}$, that $4\ell x^{-E/B\ell} = o(1)$ as $\sL \rightarrow \infty$. We shall argue as in \cref{sec:Proof_BT_SiegelZeroExists}. If $n_K$ is bounded while $\sL \rightarrow \infty$ then this is immediate, so we may assume $n_K \rightarrow \infty$. By \eqref{eqn:sL_lowerbound}, notice that $\ell = 82n_K + 162 \leq \{ 196.8 + o(1)\} \frac{\sL}{\log n_K} \leq 197 \frac{\sL}{\log n_K}$ for $n_K$ sufficiently large. Thus, for $n_K$ sufficiently large and $x \geq e^{B \sL}$, we have that
\[
4\ell x^{-E/B\ell} \ll n_K e^{-198 \sL/\ell} \ll n_K e^{-\frac{198}{197} \log n_K} \ll n_K^{-1/197}. 
\]
Hence, $4\ell x^{-E/B\ell} = o(1)$ for $x \geq e^{B\sL}$, as $n_K \rightarrow \infty$. This proves the claim.

Therefore, by \cref{prop:FinalArgument}, it follows that
{\small
\[
\frac{|G|}{|C|} \frac{S(x)}{e^{\epsilon}x} \leq 1 + \frac{\log x}{e^{\epsilon}x} \sum_{\chi} \sumS_{\rho} |F(-\rho \log x)|  + O(\eta^2),
\]}%
for $x \geq e^{B \sL}$ and where the sum $\sum^{\star}$ runs over non-trivial zeros $\rho$ of $L(s,\chi)$, counted with multiplicity, satisfying $\beta > 1 - R/\sL$ and $|\gamma| \leq \eta^{-2}$.  For a non-trivial zero $\rho$ of a Hecke $L$-function, write $\rho = \beta + i\gamma =   1 - \frac{\lambda}{\sL}   + i \frac{\mu}{\sL}$. By \cref{lem:WeightChoice}, we see that
\[
\frac{\log x}{e^{\epsilon} x} |F(-\rho \log x)| \leq x^{-(1-\beta)} \leq e^{- B \lambda},
\]
since $x \geq e^{B \sL}$. Extracting $\rho_1$ and $\bar{\rho_1}$ (or simply $\rho_1$ if $\rho_1$ is real) from $\sum^{\star}$, we deduce by our choice of $\lambda^{\star}$ in \eqref{eqn:LambdaStar} that
{\small
\begin{equation}
\frac{|G|}{|C|} \frac{S(x)}{e^{\epsilon} x} \leq 1 + m(\rho_1) e^{-B \lambda_1} + \sum_{\chi} \,\,\, \sum_{\substack{ \lambda^{\star} \leq \lambda \leq R \\ |\gamma| \leq \eta^{-2}} } e^{-B \lambda}  + O(\eta^2),
\label{eqn:NoSiegelZero_Setup}
\end{equation}}%
where $m(\rho_1) = 2$ if $\rho_1$ is complex and $m(\rho_1) = 1$ if $\rho_1$ is real. To bound the remaining quantities,  we must select $\lambda^{\star}$ for which we further subdivide into cases. 

\subsection{$\lambda_1$ small ($\eta \leq \lambda_1 < 10^{-3}$)}

By \cref{thm:ZeroFreeRegions}, $\rho_1$ is a simple real zero attached to a real character $\chi_1$, implying $m(\rho_1) = 1$. Select $B = 361$ and choose $\lambda^{\star} = 0.2103 \log(1/\lambda_1)$, which satisfies \eqref{eqn:LambdaStar} by \cref{thm:ZeroRepulsion}. Arguing as in\footnote{Observe $361 > 297$ so the same estimates hold.} \cite[Section 10.1.2]{JT_AZ} and using \cref{thm:LFZD-LowLying}, we may conclude by \eqref{eqn:NoSiegelZero_Setup} that $S(x) < \{ 2- \eta + O(\eta^2) \} \frac{|C|}{|G|} x$ for $x \geq e^{361 \sL}$. As in the final arguments of \cref{subsec:SiegelZero_VerySmall}, we use \cref{lem:ReduceMainTheorems} to establish \cref{thm:BT_Weiss_sharp} for $x \geq e^{363 \sL}$. 

\subsection{$\lambda_1$ medium ($10^{-3} < \lambda_1 \leq 0.0875$)} One argues similar to the previous case with some minor changes. Namely, select $B = 593$ and choose $\lambda^{\star} = 0.44$, and follow \cite[Section 10.1.1]{JT_AZ} to deduce \cref{thm:BT_Weiss_sharp} for $x \geq e^{595 \sL}$. 

\subsection{$\lambda_1$ large ($\lambda_1 \geq 0.0875$)} 
\label{subsec:NonExceptional_LimitCase}
Select $B = 693$ and $\lambda^{\star} = 0.2866$ as per \cref{thm:ZeroFreeRegions}.  Noting $m(\rho_1) \leq 2$ unconditionally, one may argue similarly as per the previous cases and follow \cite[Section 11]{JT_AZ} to deduce \cref{thm:BT_Weiss_sharp} for $x \geq e^{694.9\sL}$. As $\delta_0$ in \eqref{def:sL} is sufficiently small, this yields the desired range of $x$ in \cref{thm:BT_Weiss_sharp}, completing the proof in all cases.  \hfill \qed


\section{Proof of Theorems \ref{thm:LT_1} and \ref{thm:LT_2}}
\label{sec:LT_proofs_1}

First, we state a slightly weaker (but more convenient) reformulation of \cref{thm:BT_Weiss}.
\begin{thm}
\label{thm:mk}
Let $L/F$ be a Galois extension of number fields with Galois group $G$, and let $C$ be any  conjugacy class of $G$. Let $H$ be an abelian subgroup of $G$ such that $H \cap C$ is non-empty, and let $K$ be the subfield of $L$ fixed by $H$.  Let $\mathcal{P}(L/K)$ be the set of rational primes $p$ such that there is a prime ideal $\kp$ of $K$ with $\kp\mid p$ and $\kp$ ramifies in $L$, and set
{\small
\[
M(L/K)=[L:K]D_K^{1/n_K}\prod_{p\in\mathcal{P}(L/K)}p.
\]}%
If $\log x\gg n_K\log(M(L/K)n_K)$, then $\pi_C(x,L/F) \ll \frac{|C|}{|G|} \mathrm{Li}(x)$.
\end{thm}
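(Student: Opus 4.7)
The plan is to deduce \cref{thm:mk} from \cref{thm:BT_Weiss} by showing that the hypothesis $\log x \gg n_K \log(M(L/K) n_K)$, with a sufficiently large absolute implied constant, forces $x$ into the range \eqref{eqn:range_BT_Weiss}. Taking logarithms of the two summands in \eqref{eqn:range_BT_Weiss} and combining like terms, this reduces to exhibiting an absolute constant $C$ such that
\[
\log D_K \leq n_K \log M(L/K), \qquad n_K \log n_K \leq n_K \log(M(L/K) n_K), \qquad \log \cQ(L/K) \leq C n_K \log M(L/K).
\]
The first two are immediate from $D_K^{1/n_K} \leq M(L/K)$ and $n_K \leq M(L/K) n_K$, so all the content lies in the third.

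To establish the bound on $\log \cQ(L/K)$, I would fix a character $\chi \in \widehat{H}$ achieving $\N_{K/\Q}\kf_\chi = \cQ(L/K)$ and bound $\N_{K/\Q}\kf_\chi$ prime-by-prime. Since $\kf_\chi$ is supported on primes $\kq$ of $K$ ramifying in $L/K$, each such $\kq$ has residue characteristic $p \in \mathcal{P}(L/K)$. The key ingredient is a local conductor bound for characters of abelian local Galois groups: translating $\chi$ into a character of $K_\kq^{\ast}$ via local class field theory and analyzing the filtration of $K_\kq^\ast$ by the unit subgroups $1 + \kq^n$ yields
\[
v_\kq(\kf_\chi) \leq 1 + e(\kq \mid p)\bigl(1 + v_p(\mathrm{ord}(\chi))\bigr),
\]
where $e(\kq \mid p)$ is the ramification index of $\kq$ in $K/\Q$.

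I would then multiply this local bound by $\log \N \kq = f(\kq \mid p) \log p$, sum over $\kq \mid p$ using $\sum_{\kq\mid p} f(\kq \mid p) \leq n_K$ and $\sum_{\kq \mid p} e(\kq \mid p) f(\kq \mid p) \leq n_K$, and finally sum over $p \in \mathcal{P}(L/K)$ using $\sum_p v_p(\mathrm{ord}(\chi)) \log p = \log \mathrm{ord}(\chi) \leq \log[L:K]$, to obtain
\[
\log \cQ(L/K) \leq 2 n_K \log \!\!\!\prod_{p \in \mathcal{P}(L/K)}\!\!\! p + n_K \log[L:K] \leq 3 n_K \log M(L/K),
\]
since $\prod_{p \in \mathcal{P}(L/K)} p$ and $[L:K]$ are each factors of $M(L/K)$. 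Combining all three bounds, if the implied constant in $\log x \gg n_K \log(M(L/K) n_K)$ is chosen sufficiently large (an explicit choice on the order of $800$ suffices to dominate both $246 \log D_K + 185 \log \cQ$ and $82\log D_K + 130 \log \cQ + 246 n_K \log n_K$), then \eqref{eqn:range_BT_Weiss} holds, and \cref{thm:BT_Weiss} immediately delivers the conclusion $\pi_C(x, L/F) \ll \tfrac{|C|}{|G|}\mathrm{Li}(x)$.

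The main obstacle is the local conductor bound used above: while it is standard (and follows from Serre's description of the conductor via the upper-numbering ramification filtration together with Hasse--Arf for abelian extensions), the explicit linear dependence on $v_p(\mathrm{ord}(\chi))$ and $e(\kq \mid p)$ needs to be extracted carefully. Everything else in the derivation is elementary bookkeeping with the defining formula for $M(L/K)$.
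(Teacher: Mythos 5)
Your proposal is correct and follows essentially the same route as the paper: reduce to \cref{thm:BT_Weiss} by showing that $\log D_K$, $n_K\log n_K$, and $\log\cQ(L/K)$ are each $O(n_K\log(M(L/K)n_K))$, the only nontrivial point being the conductor bound. The local analysis you carry out by hand (the bound $v_{\kq}(\kf_\chi)\leq 1+e(\kq\mid p)(1+v_p(\mathrm{ord}\,\chi))$ and the subsequent summation giving $\log\cQ(L/K)\leq 3n_K\log M(L/K)$) is precisely the content of \cite[Proposition 2.5]{MMS}, which the paper simply cites in the form $\cQ(L/K)\leq\big([L:K]\prod_{p\in\mathcal{P}(L/K)}p\big)^{2n_K}$, so your self-contained derivation is a valid (and marginally sharper) substitute for that citation.
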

\begin{proof}
If $L/K$ is abelian, then \cite[Proposition 2.5]{MMS} states that
\[
\mathcal{Q}(L/K)\leq \Big([L:K]\prod_{p\in\mathcal{P}(L/K)}p\Big)^{2n_K}.
\]
Using the definition of $M(L/K)$, we see that \eqref{eqn:range_BT_Weiss} is
\[
\ll (D_K\mathcal{Q}(L/K)n_K^{n_K})^{246}\ll (n_K M(L/K))^{500n_K}.
\]
The claimed result now follows immediately from \cref{thm:BT_Weiss}.
\end{proof}

\subsection{Proof of \cref{thm:LT_1}}
Fix a newform $f$ (cf. Section 1) of even integral weight $k_f\geq2$, level $N_f$, and trivial nebentypus with integral Fourier coefficients, and fix an integer $a$.  For each prime $p$, we define $\omega_p=(a_f(p)^2-4p^{k_f-1})^{1/2}$.  We know from Deligne's proof of the Weil conjectures that $|a_f(p)|\leq 2p^{(k_f-1)/2}$ for all $p$, so $\Q(\omega_p)$ is an imaginary quadratic extension of $\Q$.  Set
\[
\pi_f(x,a;\ell)=\#\{\textup{$p\leq x$: $a_f(p)\equiv a\pmod\ell$ and $\ell$ splits in $\Q(\omega_p)$}\}.
\]
Let $\ell_1<\ell_2<\cdots<\ell_t$ be any $t$ odd primes, each less than $\exp(\frac{\log x}{2t})$.  By \cite[Corollary 4.2]{Wan}, if $t\sim(4/\log 2)\log\log x$, then
{\small
\begin{equation}
\label{eqn:pi_f_inequality}
\pi_f(x,a)\ll\sum_{j=1}^t \pi_f(x,a;\ell_j)+\frac{x}{(\log x)^2}\ll (\log\log x)\max_{1\leq j\leq t}\pi_f(x,a;\ell_j)+\frac{x}{(\log x)^2}.
\end{equation}}%
We proceed to bound $\pi_f(x,a;\ell)$, where $\ell\leq \exp((\log 2)(\log x)/(8\log\log x))$.

Let $\ell$ be prime, let $\mathbb{F}_{\ell}$ be the field of $\ell$ elements, and let $\mathrm{Frob}_p$ be the Frobenius automorphism of $\mathrm{Gal}(\overline{\Q}/\Q)$ at $p$.  For each $\ell$, there is a representation
\begin{equation}
\label{eqn:deligne_rep}
\rho_{f,\ell}:\mathrm{Gal}(\overline{\Q}/\Q)\to\mathrm{GL}_2(\mathbb{F}_\ell)
\end{equation}
which is unramified outside $N_f\ell $ such that for all primes $p\nmid N_f\ell$, we have that $\mathrm{tr}(\rho_{f,\ell}(\mathrm{Frob}_p))\equiv a_f(p)\pmod{\ell}$ and $\det(\rho_{f,\ell}(\mathrm{Frob}_p))\equiv p^{k_f-1}\pmod{\ell}$.  We have that $\rho_{f,\ell}$ is surjective for all but finitely many $\ell$.  Let $L=L_{\ell}$ be the subfield of $\overline{\Q}$ fixed by $\ker\rho_{f,\ell}$.  If $\ell$ is sufficiently large, then $L/\Q$ is a Galois extension, unramified outside of $N_f \ell$, whose Galois group is $G=\{g\in\mathrm{GL}_2(\mathbb{F}_{\ell}):\det g\in(\mathbb{F}_{\ell}^\times)^{k_f-1}\}$.

Define $C=\{\textup{$A\in G$: $\mathrm{tr}(A)\equiv a\pmod{\ell}$ and $\mathrm{tr}(A)^2-4\det(A)\in\mathbb{F}_{\ell}$ is a square}\}$.  Let $B$ denote the upper triangular matrices in $\mathrm{GL}_2(\mathbb{F}_{\ell})\cap G$, and let $L^B$ be the subfield of $L$ fixed by $B$.  Let $U$ be the unipotent elements of $B$, and let $L^U$ be the subfield of $L $ fixed by $U$.  Note that $U$ is a normal subgroup of $B$ and that $B/U\cong\mathrm{Gal}(L^U/L^B)$ is abelian.  Let $C'$ be the image of $C\cap B$ in $B/U$.  If $x$ is sufficiently large, then by \cite[Lemmas 2.7 and 4.3]{Zywina},
{\small
\[
\pi_f(x,a;\ell)\ll \pi_{C'}(x,L^U/L^B)+n_{L^B}\Big(\frac{\sqrt{x}}{\log x}+\log M(L^U/L^B)\Big).
\]}%
Applying \cref{thm:mk} to the Chebotarev prime counting functions for each conjugacy class in $C'$, we have that if $\log x\gg n_{L^B}\log(M(L^U/L^B)n_{L^B})$, then
{\small
\[
\pi_f(x,a;\ell)\ll \frac{|C'|}{|B/U|}\frac{x}{\log x}+n_{L^B}\Big(\frac{\sqrt{x}}{\log x}+\log M(L^U/L^B)\Big).
\]}%

By \cite[Lemma 4.4]{Zywina}, we have $|C'|/|B/U|\ll1/\ell$, $n_{L^B}\ll\ell$, and $\log M(L^U/L^B)\ll_{N_f}\log\ell$.  Combining all of our estimates, we find that
{\small
\begin{equation}
\label{eqn:upper_bound_pi_f}
\pi_f(x,a;\ell)\ll\frac{1}{\ell}\frac{x}{\log x}+\frac{\ell\sqrt{x}}{\log x}+\ell\log N_f\ell,\qquad \log x\gg \ell\log N_f\ell.
\end{equation}}%
Thus, taking $\ell\sim c' \log x / \log(N_f\log x)$ for some sufficiently small absolute constant $c'>0$,
{\small
\begin{equation}
\label{eqn:ell-adic_bound}
\pi_f(x,a;\ell)\ll\frac{x\log(N_f\log x)}{(\log x)^2}.
\end{equation}}%
Now, as before, let $t\in\Z$ satisfy $t\sim4/(\log 2)\log\log x$, and let $\ell_1<\ell_2<\cdots<\ell_t$ be $t$ consecutive primes with $\ell_1\sim c' \log x / \log(N_f\log x)$.  By the prime number theorem, $\ell_j\in[\ell_1,2\ell_1]$ for all $1\leq j\leq t$.  Therefore, if $c'$ is made sufficiently small, we have that
{\small
\begin{equation}
\label{eqn:pi_f_bound_2}
\max_{1\leq j\leq t}\pi_f(x,a;\ell_j)\ll\frac{x\log(N_f\log x)}{(\log x)^2}.
\end{equation}}%
\cref{thm:LT_1} now follows from inserting the inequality \eqref{eqn:pi_f_bound_2} into the inequality \eqref{eqn:pi_f_inequality}. 

\begin{remark}
Using the Cauchy-Schwarz and Polya-Vinogradov inequalities, V. K. Murty \cite[Page 304]{VKM} proved that
{\small
\begin{equation}
\label{eqn:vkm_1}
\pi_f(x,a)\ll \max_{\ell\in[y,2y]}\pi_f(x,a;\ell)+\Big(\frac{\pi_f(x,a)x\log y}{y}\Big)^{1/2}.
\end{equation}}%
Using \cite[Theorem 4.6]{VKM}, it is subsequently shown that if $\ell\in[y,2y]$ and $y=c'(\log x)/(\log\log x)^2$ for some sufficiently small absolute constant $c'>0$, then
{\small
\begin{equation}
\label{eqn:vkm_2}
\pi_f(x,a;\ell)\ll\frac{x(\log\log x)^2}{(\log x)^2}.
\end{equation}}%
It is then claimed in \cite{VKM} that \eqref{eqn:vkm_1} and \eqref{eqn:vkm_2} imply $\pi_f(x,a)\ll_{N_f}x(\log\log x)^2/(\log x)^2$.  It is not clear to us how to deduce this estimate for $\pi_f(x,a)$ using \eqref{eqn:vkm_1} and \eqref{eqn:vkm_2}. In particular, if $\pi_f(x,a) \gg x/(\log x)^2$, then the aforementioned choice of $y$ forces the secondary term in \eqref{eqn:vkm_1} to be $\gg x/(\log x)^{3/2}$. By inserting \eqref{eqn:vkm_2} into \eqref{eqn:pi_f_inequality} instead of \eqref{eqn:vkm_1}, one obtains the weaker statement \eqref{eqn:LT_111}.  The source of our improvement over \cite{VKM} stems solely from the $\log\log x$ savings over \eqref{eqn:vkm_2}, which can be seen from \eqref{eqn:ell-adic_bound}.
\end{remark}

\subsection{Proof of \cref{thm:LT_2}}
The proof of \cref{thm:LT_2} is nearly identical to the proof of \cite[Theorem 1.3(ii)]{Zywina} except that we use \cref{thm:BT_Weiss} to bound the ensuing Chebotarev prime counting function instead of using \cite[Theorem 2.1(ii)]{Zywina}.  The analytic details are very similar to the above proof of \cref{thm:LT_1}, but the particular Galois extension to which \cref{thm:BT_Weiss} is applied is different.  Following \cite[Section 5.2]{Zywina}, we apply \cref{thm:BT_Weiss} instead of \cite[Theorem 2.1(ii)]{Zywina}, which allows us to choose
\[
y=\frac{c}{h_k}\frac{\log x}{\log(\frac{D_k}{h_k}\log x)}
\]
(where $D_k$ is the absolute discriminant of $k$ and $h_k$ is the class number of $k$) for some sufficiently small absolute constant $c>0$.  This yields the claimed result.

\bibliographystyle{abbrv}
\bibliography{BT_for_CDT}
\end{document}